\documentclass[11pt]{amsart}
\usepackage{amsfonts,amssymb,amsthm}
\usepackage{amsmath,amscd}
\usepackage{pstricks}
\usepackage{pstricks,pst-node}
\usepackage{mathrsfs}
\usepackage[all]{xy}

\DeclareMathAlphabet{\mathpzc}{OT1}{pzc}{m}{it}

\oddsidemargin 0truein \evensidemargin 0pt \topmargin 0pt
\textheight 8.5truein
\textwidth 6.3truein


\theoremstyle{plain}
\newtheorem{Thm}{Theorem}[section]
\newtheorem{Prop}[Thm]{Proposition}

\newtheorem{Lem}[Thm]{Lemma}
\newtheorem{Coro}[Thm]{Corollary}
\theoremstyle{definition}
\newtheorem{Def}[Thm]{Definition}

\numberwithin{equation}{section}



\newcommand{\Rep}{\mathrm{Rep}}
\newcommand{\ch}{\mathrm{ch}}
\newcommand{\kav}{\kappa_{v}}
\newcommand{\wt}{\mathrm{wt}}
\newcommand{\wtl}{\mathrm{wt}_{\ell}}
\newcommand{\Cr}{\widehat{\mathcal C}_r}
\newcommand{\hFn}{\widehat{\sF}_n}
\newcommand{\hFnchi}{\widehat\sF_{n,\chi}}
\newcommand{\hFnchij}{\widehat\sF_{n,\chi_j}}

\newcommand{\hFnchio}{\widehat\sF_{n,\chi_1}}
\newcommand{\hFnchit}{\widehat\sF_{n,\chi_2}}
\newcommand{\hFnchiot}{\widehat\sF_{n,\chi_1\chi_2}}

\newcommand{\hFnchii}{\widehat\sF_{n,\chi_i}}
\newcommand{\hFnr}{\widehat\sF_{n,r}}

\newcommand{\hFnQz}{\widehat\sF_{n,\ol{\bfQ_0}}}
\newcommand{\hFnQ}{\widehat\sF_{n,\ol{\bfQ}}}
\newcommand{\hFnrQ}{\widehat\sF_{n,r,\ol{\bfQ}}}
\newcommand{\hFnQot}{\widehat\sF_{n,\ol{\bfQ_1}\cdot\ol{\bfQ_2}}}

\newcommand{\hFnrchi}{\widehat\sF_{n,r,\chi}}

\newcommand{\bse}{\boldsymbol{e}}
\newcommand{\bsa}{\boldsymbol{a}}


\newcommand{\bfa}{{\mathbf{a}}}
\newcommand{\bfb}{{\mathbf{b}}}

\newcommand{\bfi}{{\mathbf{i}}}
\newcommand{\bfj}{{\mathbf{j}}}

\newcommand{\bfs}{{\mathbf{s}}}

\newcommand{\bfP}{{\mathbf{P}}}
\newcommand{\bfQ}{{\mathbf{Q}}}


\def\fS{{\frak S}}

\newcommand{\ms}{\mathscr}


\newcommand{\sfz}{{\mathsf z}}
\newcommand{\sfs}{{\mathsf s}}

\newcommand{\sfF}{{\mathsf F}}



\def\sC{{\mathcal C}}

\def\sF{{\mathcal F}}

\def\sH{{\mathcal H}}

\def\sM{{\mathcal M}}

\def\sX{{\mathcal X}}
\def\sY{{\mathcal Y}}

\newcommand{\mbzn}{\mathbb Z^{n}}
\newcommand{\mbnn}{\mathbb N^{n}}
\newcommand{\mbn}{\mathbb N}

\newcommand{\mbc}{\mathbb C}
\newcommand{\mbz}{\mathbb Z}

\newcommand{\mba}{\mathbb A}

\newcommand{\ttk}{\mathtt{k}}
\newcommand{\tth}{\mathtt{h}}

\newcommand{\ttx}{\mathtt{x}}
\newcommand{\ttg}{\mathtt{g}}


\newcommand{\End}{\operatorname{End}}
\newcommand{\Hom}{\operatorname{Hom}}
\newcommand{\Ext}{\operatorname{Ext}}

\newcommand{\spann}{\operatorname{span}}

\newcommand{\Det}{\mathrm{Det}}


\newcommand{\la}{{\lambda}}
\newcommand{\La}{\Lambda}
\newcommand{\ga}{{\gamma}}
\newcommand{\Ga}{{\Gamma}}

\newcommand{\dt}{\delta}
\newcommand{\Dt}{\Delta}

\newcommand{\Og}{\Omega}
\newcommand{\og}{\omega}

\newcommand{\vi}{\varphi}

\newcommand{\al}{\alpha}
\newcommand{\bt}{\beta}
\newcommand{\sg}{\sigma}

\newcommand{\ol}{\overline}

\newcommand{\Lcp}{\bar L}

\newcommand{\Mcp}{\bar M}
\newcommand{\Icp}{\bar I}
\newcommand{\Wcp}{\bar W}
\newcommand{\Jcp}{\bar J}


\def\ggp#1#2{\left[\kern-3.2pt\left[{#1\atop #2}\right]\kern-3.2pt\right]}


\def\leq{\leqslant}\def\geq{\geqslant}
\def\le{\leqslant}

\newcommand{\bop}{\bigoplus}

\newcommand{\ot}{\otimes}

\newcommand{\han}{\subseteq}

\newcommand{\h}{\widehat}
\newcommand{\ti}{\widetilde}

\newcommand{\Lanr}{\Lambda(n,r)}


\newcommand{\lra}{\longrightarrow}
\newcommand{\ra}{\rightarrow}

\newcommand{\zr}{\zeta_r}




\newcommand{\vtg}{{\!\vartriangle\!}}


\def\ttv{v}

\newcommand{\Hrv}{\sH(r)_v}
\newcommand{\afHrv}{\sH_{\vtg}(r)_v}

\newcommand{\afUglv}{U_{v}(\widehat{\frak{gl}}_n)}
\newcommand{\Uglv}{U_{v}({\frak{gl}}_n)}


\newcommand{\fSr}{\fS_r}




\newcommand{\afSrv}{{\mathcal S}_{\vtg}(n,r)_{v}}
\newcommand{\afSrprimev}{{\mathcal S}_{\vtg}(n,r')_{v}}

\newcommand{\afUslv}{U_{v}(\widehat{\frak{sl}}_n)}


\newcommand{\Xn}{\mathsf X(n)}
\newcommand{\Qnp}{\mathsf Q^+(n)}
\newcommand{\Qnpr}{\mathsf Q^+(n,r)}
\newcommand{\Qnpri}{\mathsf Q^+(n,r_i)}
\newcommand{\Qnpt}{\mathsf Q^+(n,t)}
\newcommand{\Rn}{\mathsf R(n)}
\newcommand{\Rnp}{\mathsf R^+(n)}
\newcommand{\Rnm}{\mathsf R^-(n)}
\newcommand{\Pn}{\mathsf P(n)}
\newcommand{\Pno}{\mathsf P(n+1)}
\newcommand{\Pnop}{\mathsf P^+(n+1)}
\newcommand{\Pnp}{\mathsf P^+(n)}

\newcommand{\Qn}{\mathsf Q(n)}


\newcommand{\Ogv}{\Og_{v}}


\makeatletter
\makeatletter
\def\section{\def\@secnumfont{\mdseries}\@startsection{section}{1}%
  \z@{.7\linespacing\@plus\linespacing}{.5\linespacing}%
  {\normalfont\scshape\centering}}
\def\subsection{
\@startsection{subsection}{2}%
{\parindent}{.5\linespacing\@plus.7\linespacing}{-.5em}%
{\normalfont
}}

\begin{document}
\title{Blocks of affine quantum Schur algebras}
\author{Qiang Fu}
\address{Department of Mathematics, Tongji University, Shanghai, 200092, China.}
\email{q.fu@tongji.edu.cn}


\thanks{Supported by the National Natural Science Foundation
of China, the Program NCET, Fok Ying Tung Education Foundation
and the Fundamental Research Funds for the Central Universities}

\begin{abstract}
The affine quantum Schur algebra is a certain important infinite dimensional algebra whose representation theory is closely related to that of quantum affine $\frak{gl}_n$. Finite dimensional irreducible modules for the affine quantum Schur algebra $\afSrv$ were classified in  \cite{DDF}, where $v\in\mbc^*$ is not a root of unity.
We will classify blocks of the affine quantum Schur algebra $\afSrv$ in this paper.
\end{abstract}
 \sloppy \maketitle
\section{Introduction}
The classical Schur algebra $S(n,r)$ over an infinite field $F$  is a finite dimensional algebra whose module category is equivalent to the category of  $r$-homogeneous polynomial representations of the general linear group $GL_n(F)$.
The blocks of the Schur algebra $S(n,r)$ were determined by Donkin in \cite{Donkin} from the blocks of $GL_n(F)$. The $q$-Schur algebras are $q$-analogues of Schur algebras. When $q = 1$, these are the usual Schur algebras, and when $q$ is a prime power,
$q$-Schur algebras play an important role in the representation of finite general linear groups in the nondescribing characteristic case (see \cite{DJ89}).
It was proved by Cox \cite{Cox} that the blocks of $q$-Schur algebras $S_q(n,r)$ can be derived in the same way from the blocks of an appropriate quantum general linear group. The $q$-Schur algebras and quantum $\frak{gl}_n$ are related by quantum Schur--Weyl reciprocity  \cite{Jimbo,Du95,DPS} (see also \cite{ATY,SS,Ar,Hu} for the cyclotomic Schur--Weyl reciprocity).

The affine quantum Schur algebra is the affine version of the $q$-Schur algebra and it has several equivalent definitions (see \cite{GV,Gr99,Lu99,VV99}).
Unlike the $q$-Schur algebra, the affine quantum Schur algebra is an  infinite dimensional algebra.
Let $\afUglv$ be the quantized enveloping algebra of $\h{\frak{gl}}_n$ and let $\afSrv$ be the affine quantum Schur algebra over $\mbc$, where $v\in\mbc^*$ is not a root of unity.
It was proved in \cite[Th. 3.8.1]{DDF} that
there is a surjective algebra homomorphism $\zeta_r:\afUglv\ra\afSrv$ (cf. \cite{Fu,Po}).
Every simple $\afSrv$-module is inflated to a simple $\afUglv$-module
via $\zeta_r$. Furthermore each finite dimensional polynomial irreducible $\afUglv$-module can be regarded as an $\afSrv$-module via the map $\zeta_r$ for some $r$. Therefore, the representation theory of the affine quantum Schur algebras $\afSrv$ is closely related to that of $\afUglv$.

Finite dimensional irreducible modules for the quantum loop algebra $U_v(\h{\frak{g}})$ were classified by Chari--Pressley in \cite{CP91,CPbk,CP95}, where $\frak{g}$ is a complex finite dimensional simple Lie algebra. Furthermore the blocks of the category of finite dimensional $U_v(\h{\frak{g}})$-modules of type $1$ were classified in \cite{EM,CM}. Finite dimensional irreducible modules for the affine quantum Schur algebra $\afSrv$ were classified \cite[Th. 4.6.8]{DDF} in terms of Drinfeld polynomials.
Since the category $\hFnr$ of finite dimensional $\afSrv$-modules is not semisimple, the problem of determining the blocks in the category $\hFnr$  is very important. We will classify blocks of the category $\hFnr$ in Theorem \ref{block} and Theorem \ref{equivalent condition}.

It is well known that the blocks of extended affine Hecke algebras of type $A$ are classified by central characters (see \cite[Th. 7]{Muller}, \cite[III.9]{BG} and \cite[Th. 2.15]{LM}).
If $n\geq r$ then the category $\hFnr$ is equivalent to the category $\Cr$ of finite dimensional modules for the extended affine Hecke algebra $\afHrv$ of type $A$ (see \cite[Th. 4.2]{CP96} and \cite[Th. 4.1.3]{DDF}). Thus by Theorem \ref{block} we obtain a different approach to the classification of blocks in the category $\Cr$.

We organize this paper as follows. We recall the definition of quantum affine $\frak{gl}_n$ and the affine quantum Schur algebra $\afSrv$ in \S2. In \S3, we will recall some results about $\afUglv$ and $\afSrv$.
Let $\mba=\{\frac{f(u)}{g(u)}\in\mbc(u)\mid f(u),g(u)\in\mbc[u],\,f(0)=g(0)=1\}$ and let $\Xn=\mba^n$.
In \S4, we define the $\ell$-root lattice $\Rn$ of $\afUglv$ to be a certain subgroup of $\Xn$, and construct a certain  subset $\Xi_{n,r}$ of $\Xn/\Rn$. We will describe the set $\Xi_{n,r}$ in Proposition \ref{vartheta iso} and discuss the $\ell$-weights of polynomial irreducible $\afUglv$-modules in Corollary \ref{l-weight for L(bfQ)}. We will prove in Theorem \ref{block} that the set $\Xi_{n,r}$ is the index set of the blocks in the category of finite dimensional $\afSrv$-modules. In \S5, we will introduce the Weyl modules for the affine quantum Schur algebra $\afSrv$ and generalize Corollary \ref{l-weight for L(bfQ)} to the case of $\ell$-highest weight modules.
Blocks of the category of finite dimensional $\afSrv$-modules will be classified in Theorem  \ref{block} and Theorem \ref{equivalent condition}. As an application, we will use Theorem \ref{equivalent condition} to give a classification of the blocks for the affine Hecke algebra $\afHrv$ in Theorem \ref{block for affine Hecke algebras}.

\section{Quantum affine $\frak{gl}_n$ and affine quantum Schur algebras}

\subsection{}
Let $\ttv\in\mbc^*$ be a complex number which is not a root of unity,
where $\mbc^*=\mbc\backslash\{0\}$. Let $(c_{i,j})$ be the Cartan matrix of affine type $A_{n-1}$.

We recall the Drinfeld's new realization of quantum affine $\frak{gl}_n$ as follows (cf. \cite{FM}).
\begin{Def}\label{QLA}
The {\it quantum loop algebra} $\afUglv$ (or {\it
quantum affine $\mathfrak {gl}_n$}) is the $\mbc$-algebra generated by $\ttx^\pm_{i,s}$
($1\leq i<n$, $s\in\mbz$), $\ttk_i^{\pm1}$ and $\ttg_{i,t}$ ($1\leq
i\leq n$, $t\in\mbz\backslash\{0\}$) with the following relations:
\begin{itemize}
 \item[(QLA1)] $\ttk_i\ttk_i^{-1}=1=\ttk_i^{-1}\ttk_i,\,\;[\ttk_i,\ttk_j]=0$,
 \item[(QLA2)]
 $\ttk_i\ttx^\pm_{j,s}=\ttv^{\pm(\dt_{i,j}-\dt_{i,j+1})}\ttx^\pm_{j,s}\ttk_i,\;
               [\ttk_i,\ttg_{j,s}]=0$,
 \item[(QLA3)] $[\ttg_{i,s},\ttx^\pm_{j,t}]
               =\begin{cases}0,\;\;&\text{if $i\not=j,\,j+1$};\\
                  \pm \ttv^{-js}\frac{[s]_v}{s}\ttx^\pm_{j,s+t},\;\;\;&\text{if $i=j$};\\
                  \mp \ttv^{-js}\frac{[s]_v}{s}\ttx_{j,s+t}^\pm,\;\;\;&\text{if $i=j+1$,}
                \end{cases}$
 \item[(QLA4)] $[\ttg_{i,s},\ttg_{j,t}]=0$,
 \item[(QLA5)]
 $[\ttx_{i,s}^+,\ttx_{j,t}^-]=\dt_{i,j}\frac{\phi^+_{i,s+t}
 -\phi^-_{i,s+t}}{\ttv-\ttv^{-1}}$,
 \item[(QLA6)] $\ttx^\pm_{i,s}\ttx^\pm_{j,t}=\ttx^\pm_{j,t}\ttx^\pm_{i,s}$, for $|i-j|>1$, and
 $[\ttx_{i,s+1}^\pm,\ttx^\pm_{j,t}]_{\ttv^{\pm c_{ij}}}
               =-[\ttx_{j,t+1}^\pm,\ttx^\pm_{i,s}]_{\ttv^{\pm c_{ij}}}$,
 \item[(QLA7)]
 $[\ttx_{i,s}^\pm,[\ttx^\pm_{j,t},\ttx^\pm_{i,p}]_\ttv]_\ttv
 =-[\ttx_{i,p}^\pm,[\ttx^\pm_{j,t},\ttx^\pm_{i,s}]_\ttv]_\ttv\;$ for
 $|i-j|=1$,
\end{itemize}
 where $[x,y]_a=xy-ayx$, $[s]_v=\frac{v^s-v^{-s}}{v-v^{-1}}$ and $\phi_{i,s}^\pm$ are defined via the
 generating functions in indeterminate $u$ by
$$\Phi_i^\pm(u):={\ti\ttk}_i^{\pm 1}
\exp\bigl(\pm(\ttv-\ttv^{-1})\sum_{m\geq 1}\tth_{i,\pm m}u^{\pm
m}\bigr)=\sum_{s\geq 0}\phi_{i,\pm s}^\pm u^{\pm s}$$ with
$\ti\ttk_i=\ttk_i\ttk_{i+1}^{-1}$ ($\ttk_{n+1}=\ttk_1$) and $\tth_{i,\pm
m}=\ttv^{\pm(i-1)m}\ttg_{i,\pm m}-\ttv^{\pm(i+1)m}\ttg_{i+1,\pm
m}\,(1\leq i<n).$
\end{Def}

Let $\afUslv$ be
the subalgebra of $\afUglv$ generated by the elements $\ttx^\pm_{i,s}$, $\ti\ttk_i^{\pm1}$ and $\tth_{i,t}$ for $1\leq i\leq n-1$, $s\in\mbz$ and $t\in\mbz \backslash\{0\}$.
For $1\leq j<n$, let $E_j=\ttx^+_{j,0}$
and $F_j=\ttx^-_{j,0}$ and let
\begin{equation*}
\begin{split}
E_n&= \ttv[\ttx_{n-1,0}^-,[\ttx_{n-2,0}^-,\cdots,
[\ttx_{2,0}^-,\ttx_{1,1}^-]_{\ttv^{-1}}\cdots
]_{\ttv^{-1}}]_{\ttv^{-1}} \ti\ttk_n,\\
F_n&=\ttv^{-1}\ti\ttk_n^{-1}[\cdots[[\ttx_{1,-1}^+,\ttx_{2,0}^+]_\ttv,\ttx_{3,0}^+]_\ttv,
 \cdots,\ttx_{n-1,0}^+]_\ttv.
\end{split}
\end{equation*}
Then the algebra  $\afUslv$ is also generated by the elements $E_i$, $F_i$ and $\ti\ttk_i^{\pm 1}$ for $1\leq i\leq n$ (see \cite{Be}).
For $s\geq 1$ let
$$\sfz^\pm_s=
 s\ttv^{\pm s} \frac1{[s]_\ttv}(\ttg_{1,\pm s}+\cdots+\ttg_{n,\pm s}).$$
Then the elements $\sfz^\pm_s$ are central in $\afUglv$ and the algebra $\afUglv$ is generated by $E_i$, $F_i$, $\ttk_i^{\pm 1}$ and
$\sfz^\pm_s$ for $1\leq i\leq n$ and $s\geq 1$. Furthermore,
the algebra $\afUglv$ is a Hopf algebra with
comultiplication $\Dt:\afUglv\ra\afUglv\ot\afUglv$ defined by
\begin{equation}\label{Hopf}
\begin{split}
&\Delta(E_i)=E_i\otimes\ti \ttk_i+1\otimes
E_i,\quad\Delta(F_i)=F_i\otimes
1+\ti \ttk_i^{-1}\otimes F_i,\\
&\Delta(\ttk^{\pm 1}_i)=\ttk^{\pm 1}_i\otimes \ttk^{\pm 1}_i,\quad
\Delta(\sfz_s^\pm)=\sfz_s^\pm\otimes1+1\otimes
\sfz_s^\pm;\\
\end{split}
\end{equation}
where $1\leq i\leq n$ and $s\in \mbz^+$ (see \cite[2.2]{Hub2} and \cite[Cor. 2.3.5 and Prop. 4.4.1]{DDF}).

\subsection{}
We now recall the definition of affine quantum Schur algebras (see \cite{GV,Gr99,Lu99,VV99}).
Let $\Ogv$ be the $\mbc$-vector space with basis $\{\og_i\mid i\in\mathbb Z\}$.  The algebra $\afUglv$ acts on  $\Ogv$ by
\begin{equation*}
\aligned
E_i\cdot \og_s&=\dt_{\ol{i+1},\bar s}\og_{s-1},\quad F_i\cdot \og_s=\dt_{\bar i,\bar
s}\og_{s+1},\quad
\ttk_i^{\pm 1}\cdot \og_s=\ttv^{\pm\dt_{\bar i,\bar s}}\og_s,\\
&\sfz_t^+\cdot\og_s=\og_{s-tn},\quad\text{and }\;\;
\sfz_t^-\cdot\og_s=\og_{s+tn},
\endaligned
\end{equation*}
where $\bar i$ denotes the corresponding integer modulo $n$. The tensor space
$\Ogv^{\ot r}$ is a left $\afUglv$-module via the coproduct $\Delta$ on $\afUglv$.

The extended affine Hecke algebra $\afHrv$ of type $A$ is defined to be the $\mbc$-algebra generated by
$$T_i,\quad X_j^{\pm 1}(\text{$1\leq i\leq r-1$, $1\leq j\leq r$}),$$
 and relations
$$\aligned
 & (T_i+1)(T_i-\ttv^2)=0,\\
 & T_iT_{i+1}T_i=T_{i+1}T_iT_{i+1},\;\;T_iT_j=T_jT_i\;(|i-j|>1),\\
 & X_iX_i^{-1}=1=X_i^{-1}X_i,\;\; X_iX_j=X_jX_i,\\
 & T_iX_iT_i=\ttv^2 X_{i+1},\;\;  X_jT_i=T_iX_j\;(j\not=i,i+1).
\endaligned$$

Let $I(n,r)=\{(i_1,\ldots,i_r)\in\mbz^r\mid 1\leq i_k\leq
n,\,\forall k\}.$ We denote the symmetric group on $r$ letters by $\fSr$.
The symmetric group $\fSr$ acts
on the set $I(n,r)$ by place permutation:
\begin{equation*}\label{place permutation}
\bfi w=(i_{w(1)},\cdots,i_{w(r)}),\quad\text{
for $\bfi\in I(n,r)$ and $w\in\fSr$.}
\end{equation*}
For
 $\bfi=(i_1,\ldots,i_r)\in\mbz^r$, write
$$\og_\bfi=\og_{i_1}\ot\og_{i_2}\ot\cdots\ot \og_{i_r}=\og_{i_1}\og_{i_2}\cdots \og_{i_r}\in\Ogv^{\ot r}.$$
The algebra $\afHrv$ acts on  $\Ogv^{\ot r}$ on the right via
\begin{equation*}
\begin{split}
\og_{\bf i}\cdot T_k&=\left\{\begin{array}{ll} \ttv^2\og_{\bf
i},\;\;&\text{if $i_k=i_{k+1}$;}\\
\ttv\og_{\bfi s_k},\;\;&\text{if $i_k<i_{k+1}$;}\qquad\text{ for all }\bfi\in I(n,r),\\
\ttv\og_{\bfi s_k}+(\ttv^2-1)\og_{\bf i},\;\;&\text{if
$i_{k+1}<i_k$,}
\end{array}\right.\\
\og_{\bf i}\cdot X_t^{-1}
&=\og_{i_1}\cdots\og_{i_{t-1}}\og_{i_t+n}\og_{i_{t+1}}\cdots\og_{i_r},\qquad \text{ for all }\bfi\in \mbz^r;
\end{split}
\end{equation*}
for all $1\leq k\leq r-1$ and $1\le t\le r$, where $s_k:=(k,k+1)\in\fSr$  (see \cite{VV99}).
The endomorphism algebra $$\afSrv:=\End_{\afHrv}(\Ogv^{\ot r})$$
is called an affine quantum Schur algebra.

\subsection{}
Since the actions of $\afUglv$ and $\afHrv$ on $\Ogv^{\ot r}$ are commute (see
\cite[Prop. 3.5.5 and Prop. 4.4.1]{DDF}), there is an algebra homomorphism
\begin{equation}\label{zetar}
\zeta_r:\afUglv\ra\afSrv
\end{equation}
It was proved in \cite[Th. 3.8.1]{DDF} that $\zr$ is surjective.
Every $\afSrv$-module will be inflated into a $\afUglv$-module
via $\zeta_r$.

\section{Finite dimensional representation of $\afUglv$ and $\afSrv$}

\subsection{}
We first recall the classification of finite dimensional simple $\afUslv$-modules of type $1$.
Let $\mba=\{\frac{f(u)}{g(u)}\in\mbc(u)\mid f(u),g(u)\in\mbc[u],\,f(0)=g(0)=1\}$ and let $\Pn=\mba^{n-1}$. Then $\Pn$ is a group under multiplication.  For $1\leq i\leq n-1$ and $a\in\mbc^{*}$
let
\begin{equation}\label{ogia}
\og_{i,a}=(1,\ldots,1,\underset{(i)}{1-au},1,\ldots,1)\in\Pn.
\end{equation}
Then $\Pn$ is generated freely
as an abelian group by the elements $\og_{i,a}$, $1\leq i\leq n-1$, $a\in\mbc^*$. Let $\Pnp$ be the monoid generated  by $1$ and the elements $\og_{i,a}$, $1\leq i\leq n-1$, $a\in\mbc^*$.

For $1\leq j\leq n-1$ and $s\in\mbz$, define the elements $\ms
P_{j,s}\in\afUslv$ through the generating functions
\begin{equation}\label{ms Pj}
\begin{split}
& \ms P_j^\pm(u):=\exp\bigg(-\sum_{t\geq
1}\frac{1}{[t]_\ttv}\tth_{j,\pm t} (\ttv u)^{\pm
t}\bigg)=\sum_{s\geq 0}\ms P_{j,\pm s} u^{\pm
s}\in\afUslv[[u,u^{-1}]].
\end{split}
\end{equation}

For $g(u)=\prod_{1\leq i\leq m}(1-a_iu)\in\mbc[u]$
with constant term $1$ and $a_i\in\mbc^*$, define
\begin{equation}\label{f^pm(u)}
g^\pm(u)=\prod_{1\leq i\leq m}(1-a_i^{\pm1}u^{\pm1}).
\end{equation}
For
$\bfP=(P_1(u),\ldots,P_{n-1}(u))\in\Pnp$, define $P_{j,s}\in\mbc$,
for $1\leq j\leq n-1$ and $s\in\mbz$, by the following formula
\begin{equation}\label{Pis}
P_j^\pm(u)=\sum_{s\geq
0}P_{j,\pm s} u^{\pm s},
\end{equation}
where $P_j^\pm(u)$ is defined by
\eqref{f^pm(u)}.

For $\bfP\in\Pnp$ let $\Icp(\bfP)$ be the left ideal of $\afUslv$ generated by
$\ttx_{j,s}^+ ,\ms P_{j,s}-P_{j,s},$ and $\ti\ttk_j-\ttv^{\mu_j}$, for $1\leq j\leq n-1$ and $s\in\mbz$, where
$\mu_j=\mathrm{deg}P_j(u)$, and define
$$\Mcp(\bfP)=\afUslv/\Icp(\bfP).$$
Then $\Mcp(\bfP)$ has a unique simple quotient, denoted by
$\Lcp(\bfP)$.

The following result is due to Chari--Pressley (see
\cite{CP91,CPbk,CP95}).

\begin{Thm}
The modules $\Lcp(\bfP)$ with $\bfP\in\Pnp$ are all nonisomorphic
finite dimensional simple $\afUslv$-modules of  type $1$.
\end{Thm}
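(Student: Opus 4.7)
The plan is to follow the classical Chari--Pressley argument in three steps: universal construction, existence of finite-dimensional quotients, and classification. For the universal construction, observe that $\Mcp(\bfP)$ has a cyclic generator $v_\bfP$ (the image of $1$) annihilated by every $\ttx_{j,s}^+$, with $\ti\ttk_j$-eigenvalue $\ttv^{\mu_j}$ and $\ms P_{j,s}$-eigenvalue $P_{j,s}$. Using the triangular decomposition of $\afUslv$, one checks that $\Mcp(\bfP)$ decomposes into Cartan weight spaces, with the top weight space spanned by $v_\bfP$; hence every proper submodule avoids $v_\bfP$, the sum of all proper submodules is proper, and a unique simple quotient $\Lcp(\bfP)$ exists. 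Distinct $\bfP\in\Pnp$ produce non-isomorphic $\Lcp(\bfP)$'s, since $\bfP$ can be recovered from the $\ms P_{j,s}$-eigenvalues on the image of $v_\bfP$.

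For finite-dimensionality of $\Lcp(\bfP)$, handle the fundamental case $\bfP=\og_{i,a}$ first: via an evaluation homomorphism $\mathrm{ev}_a:\afUslv\ra U_\ttv(\mathfrak{sl}_n)$, pull back the $i$-th fundamental $U_\ttv(\mathfrak{sl}_n)$-module to obtain a finite-dimensional simple $\afUslv$-module. A direct computation from \eqref{ms Pj}, using the induced action of $\tth_{j,t}$ through the evaluation map, identifies its Drinfeld polynomial as $\og_{i,a}$. For general $\bfP=\prod_k \og_{i_k,a_k}\in\Pnp$, form the tensor product $W=\bigotimes_k \Lcp(\og_{i_k,a_k})$ via the coproduct \eqref{Hopf}; the tensor of the individual $\ell$-highest weight vectors is again an $\ell$-highest weight vector of $W$ with Drinfeld polynomial $\bfP$, so $\Lcp(\bfP)$ occurs as a subquotient of the finite-dimensional $W$ and is therefore finite-dimensional.

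The main obstacle is the classification: showing that every finite-dimensional simple $\afUslv$-module $V$ of type $1$ has the form $\Lcp(\bfP)$. Restricting along $U_\ttv(\mathfrak{sl}_n)\subset\afUslv$ and using finite-dimensionality, extract a vector $v_0\in V$ killed by all $\ttx_{j,s}^+$ and forming a simultaneous eigenvector of the commuting operators $\ti\ttk_j$ and $\ms P_{j,s}$, with $\ti\ttk_j\cdot v_0=\ttv^{\mu_j}v_0$, $\mu_j\in\mbn$. The crux is to prove that the two generating series $\sum_{s\geq 0}P_{j,\pm s}u^{\pm s}$ are the expansions of a single rational function as in \eqref{Pis} for some $P_j\in\mba$ of degree $\mu_j$. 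This is reduced to the rank-one situation by restricting to each $\mathfrak{sl}_2$-loop subalgebra generated by $\ttx_{j,s}^\pm$ and $\tth_{j,t}$; combined with $(\ttx_{j,0}^-)^{\mu_j+1}v_0=0$ (coming from $U_\ttv(\mathfrak{sl}_n)$-finite-dimensionality of $V$) and the commutation relation (QLA5), this forces polynomiality of the Drinfeld series and yields the desired $\bfP\in\Pnp$ with $V\cong\Lcp(\bfP)$.
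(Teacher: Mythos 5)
The paper does not prove this theorem; it states it and cites it directly to Chari and Pressley (\cite{CP91,CPbk,CP95}), so there is no in-paper argument to compare against. Your sketch is a reasonable high-level reconstruction of the Chari--Pressley approach: the Verma-type construction and unique simple quotient, finite-dimensionality via evaluation modules for the fundamental $\ell$-weights $\og_{i,a}$ (valid here because $\afUslv$ is type $A$, where the evaluation homomorphism $\mathrm{ev}_a:\afUslv\ra U_\ttv(\mathfrak{sl}_n)$ exists) together with tensor products, and the converse classification by extracting a highest $\ell$-weight vector and reducing to the rank-one $U_\ttv(\widehat{\mathfrak{sl}}_2)$ computation. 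The one place where your write-up is noticeably thinner than the cited source is the final ``crux'' step: asserting that the relation $(\ttx_{j,0}^-)^{\mu_j+1}v_0=0$ together with (QLA5) ``forces polynomiality'' compresses what is in fact the technical heart of the rank-one analysis (matching the two formal series $\sum_{s\ge 0}P_{j,\pm s}u^{\pm s}$ to the Laurent expansions at $0$ and $\infty$ of a single degree-$\mu_j$ polynomial, which uses the explicit structure of the $U_\ttv(\widehat{\mathfrak{sl}}_2)$ evaluation modules, not just the stated relations). If you intended this as a standalone proof rather than a pointer to the literature, that step needs to be carried out; as a summary of the cited argument it is accurate.
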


\subsection{}
Based on Chari--Pressley's classification of finite dimensional simple $\afUslv$-modules, finite dimensional polynomial irreducible $\afUglv$-modules were classified in \cite{FM}.
We now review the classification of irreducible polynomial representation of $\afUglv$.
Let
\begin{equation}\label{Xn}
\Xn=\mba^n=\Pno.
\end{equation}
For $1\leq i\leq n$ and $a\in\mbc^{*}$
let $\La_{i,a}\in\Xn$ be the element whose $i$th entry is $1-au$ and all other entries is $1$. In other words,
\begin{equation}\label{Laia}
\La_{i,a}=(1,\ldots,1,\underset{(i)}{1-au},1,\ldots,1)\in\Xn.
\end{equation}
Then $\Xn$ is generated freely
as an abelian group by the elements $\La_{i,a}$, $1\leq i\leq n$, $a\in\mbc^*$. Let $\Qn$ be the monoid generated  by $1$ and the elements $\La_{i,a}$, $1\leq i\leq n$, $a\in\mbc^*$.
Then by definition we have $\Qn=\Pnop$.

Following \cite{FM}, an $n$-tuple of polynomials
$\bfQ=(Q_1(u),\ldots,Q_n(u))$ with constant terms $1$ is called {\it
dominant} if, for each $1\leq i\leq n-1$, the ratio
$Q_i(v^{i-1}u)/Q_{i+1}(v^{i+1}u)$ is a polynomial. Let
$\Qnp$ be the set of dominant $n$-tuples of polynomials.

Let $\Uglv$ be the subalgebra of $\afUglv$ generated by all $\ttx^\pm_{i,0}$ and $\ttk_j^{\pm1}$ ($1\leq i\leq n-1$, $1\leq j\leq n$).
A finite dimensional representation $V$ of $\Uglv$ is said to be of type $1$
if $V=\oplus_{\la\in\mbzn}V_\la$, where
$V_\la=\{w\in V\mid \ttk_iw=v^{\la_i}w,\,1\leq i\leq n\}$. Let $$\wt(V)=\{\la\in\mbzn\mid V_\la\not=0\}.$$ If $\wt(V)\han\mbnn$ then $V$ is called a polynomial representation of $\Uglv$.

For $1\leq i\leq n$ and $s\in\mbz$, define the elements $\ms
Q_{i,s}\in\afUglv$ through the generating functions
\begin{equation}\label{ms Qj}
\begin{split}
&\quad\qquad\ms Q_i^\pm(u):=\exp\bigg(-\sum_{t\geq
1}\frac{1}{[t]_\ttv}g_{i,\pm t} (\ttv u)^{\pm t}\bigg)=\sum_{s\geq
0}\ms Q_{i,\pm s} u^{\pm s}\in\afUglv[[u,u^{-1}]].
\end{split}
\end{equation}

A $\afUglv$-module is said to be of type $1$ if $V$ is of type $1$ as a $\Uglv$-module.
Let $V$ be a finite dimensional $\afUglv$-module of type $1$. Then $V=\oplus_{\la\in\mbnn}V_\la$.
Since the elements $\ttk_i,\,\ms Q_{i,s}$ ($1\leq i\leq n$, $s\in\mbz$) commute among themselves, each $V_\la$ is a direct sum
of generalized eigenspaces of the form
\begin{equation*}\label{geigenspace}
V_{(\la,\gamma)}=\{x\in V_\la\mid (\ms
Q_{i,s}-\gamma_{i,s})^px=0\text{ for some $p$}\, (1\leq i\leq
n,s\in\mbz)\},
\end{equation*}
 where $\gamma=(\gamma_{i,s})$ with $\gamma_{i,s}\in\mbc$.

For $\bfQ=(Q_1(u),\ldots,Q_{n}(u))\in\Qn$, let
$$\deg\bfQ=(\deg Q_1(u),\cdots,\deg Q_n(u))\in\mbnn$$ and define
$Q_{i,s}\in\mbc$, for $1\leq i\leq n$ and $s\in\mbz$, by the
following formula
\begin{equation}\label{Qis}
Q_i^\pm(u)=\sum_{s\geq 0}Q_{i,\pm s}u^{\pm s},
\end{equation}
where $Q_i^\pm(u)$ is defined using \eqref{f^pm(u)}.
Given
$\bfQ\in\Qn$, let $$V_\bfQ=\{x\in V_\la\mid (\ms Q_{i,s}-Q_{i,s})^px=0\text{ for some $p$}\, (1\leq i\leq
n,s\in\mbz)\},$$ where $\la=\deg\bfQ$.

Following \cite{FM}, let $\hFn$ be the category of all finite dimensional representations $V$ of $\afUglv$ such that the restriction of $V$ to $\Uglv$ is a  polynomial representation of type $1$, and $V=\oplus_{\bfQ\in\Qn}V_\bfQ$. The objects of the category $\hFn$ are called polynomial representations of $\afUglv$.

For $\bfQ\in\Qnp$, let $I(\bfQ)$
be the left ideal of $\afUglv$ generated by $\ttx_{j,s}^+ ,\quad\ms
Q_{i,s}-Q_{i,s},$ and $\ttk_i-\ttv^{\la_i}$, for $1\leq j\leq n-1$,
$1\leq i\leq n$ and $s\in\mbz$, where $\la_i=\mathrm{deg}Q_i(u)$,
and define
$$M(\bfQ)=\afUglv/I(\bfQ).$$
Then $M(\bfQ)$ has a unique irreducible quotient, denoted by $L(\bfQ)$.
\begin{Thm}[\cite{FM}]\label{classification of simple afUglC-modules}
The modules $L(\bfQ)$ with $\bfQ\in\Qnp$ are all
nonisomorphic simple $\afUglv$-modules in the category $\hFn$.  Moreover,
\begin{equation}\label{res of L(Q)}
L(\bfQ)|_{\afUslv}\cong \bar L(\bfP),
\end{equation}
where
$\bfP=(P_1(u),\ldots,P_{n-1}(u))$ with
$P_i(u)=Q_i(\ttv^{i-1}u)/Q_{i+1}(\ttv^{i+1}u)$.
\end{Thm}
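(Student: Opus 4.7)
The strategy is to bootstrap from the Chari--Pressley classification of finite dimensional simple $\afUslv$-modules via restriction, exploiting the fact that $\afUglv$ is generated by $\afUslv$ together with the Cartan elements $\ttk_i^{\pm 1}$ and the central elements $\sfz^\pm_s$. The crucial calculation is a direct comparison of the defining generating functions \eqref{ms Pj} and \eqref{ms Qj}: substituting $\tth_{j,\pm t}=\ttv^{\pm(j-1)t}\ttg_{j,\pm t}-\ttv^{\pm(j+1)t}\ttg_{j+1,\pm t}$ yields
\[\ms P_j^\pm(u)=\ms Q_j^\pm(\ttv^{j-1}u)\cdot\ms Q_{j+1}^\pm(\ttv^{j+1}u)^{-1}\qquad(1\le j\le n-1),\]
so on any joint eigenvector of $\{\ms Q_{i,s}\}$ with eigenvalues assembled into $\bfQ\in\Qn$, the $\ms P_{j,s}$ act via the coefficients of the rational functions $Q_j(\ttv^{j-1}u)/Q_{j+1}(\ttv^{j+1}u)$. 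These are honest polynomials with constant term $1$ precisely when $\bfQ\in\Qnp$, in which case $\bfP:=(P_1(u),\ldots,P_{n-1}(u))$ lies in $\Pnp$.

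Given $\bfQ\in\Qnp$, the module $L(\bfQ)$ is a nonzero simple quotient of the cyclic module $M(\bfQ)$, and the central elements $\sfz^\pm_s$ act by scalars on it by Schur's lemma. Because the $\Uglv$-weights of $L(\bfQ)$ all share a common total $\sum_i\la_i$ (roots shift the weight by $\epsilon_i-\epsilon_{i+1}$), the $\afUslv$-weight and $\Uglv$-weight decompositions of $L(\bfQ)$ coincide, so every $\afUslv$-submodule is automatically stable under $\ttk_i^{\pm 1}$ and $\sfz^\pm_s$ and hence is a $\afUglv$-submodule. Thus $L(\bfQ)$ is also $\afUslv$-simple, and via the identity above it is a simple quotient of $\Mcp(\bfP)$; by Chari--Pressley, $L(\bfQ)|_{\afUslv}\cong\Lcp(\bfP)$, which is finite dimensional. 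This proves the restriction formula \eqref{res of L(Q)} and shows $L(\bfQ)\in\hFn$.

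Conversely, let $V\in\hFn$ be simple. Choose a $\Uglv$-weight $\la\in\mbnn$ of $V$ maximal in the dominance order; by (QLA2) the raising operators $\ttx^+_{j,s}$ strictly raise $\Uglv$-weights, so they annihilate $V_\la$, while the commuting family $\{\ms Q_{i,s}\}$ preserves the finite dimensional space $V_\la$. A joint generalized eigenvector in $V_\la$ generates a cyclic $\afUglv$-module whose unique simple quotient carries an honest highest $\ell$-weight vector with eigenvalues assembled into some $\bfQ\in\Qn$; by simplicity $V\cong L(\bfQ)$. The first paragraph together with the Chari--Pressley theorem then forces $\bfP\in\Pnp$, i.e.\ $\bfQ\in\Qnp$. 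Non-isomorphism for distinct $\bfQ$ is immediate since $\bfQ$ is recovered as the tuple of $\ms Q_i^\pm(u)$-eigenvalues on the one-dimensional highest $\ell$-weight line of $L(\bfQ)$. The main technical obstacle is justifying that $\sfz^\pm_s$ acts as scalars on $L(\bfQ)$ before finite dimensionality is known; this is Schur's lemma for a countably generated simple module over $\mbc$, and once this is in place the reduction to Chari--Pressley is clean.
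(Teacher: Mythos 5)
The paper does not prove this theorem at all: it is stated as a citation to Frenkel--Mukhin \cite{FM}, so there is no in-paper proof to compare against. Your sketch is a reasonable reconstruction of the \cite{FM} argument, and the core steps are correct: the generating-function identity $\ms P_j^\pm(u)=\ms Q_j^\pm(\ttv^{j-1}u)\,\ms Q_{j+1}^\pm(\ttv^{j+1}u)^{-1}$ does follow from the definition of $\tth_{j,\pm t}$, the coincidence of the $\afUslv$- and $\Uglv$-weight decompositions on an $\ell$-highest-weight module (because every weight of $L(\bfQ)$ has the same sum $\sum_i\la_i$ and the simple roots $\alpha_1,\dots,\alpha_{n-1}$ are linearly independent) is the right mechanism for transferring simplicity to the restriction, and the converse direction (choose a dominance-maximal weight, find a genuine joint $\ms Q_{i,s}$-eigenvector there, match it against a $\bfQ\in\Qnp$ by way of the $\afUslv$-restriction and Chari--Pressley) is the standard route.

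One remark on what you flag as the ``main technical obstacle'': no form of Schur's lemma is actually needed to see that $\sfz^\pm_s$ acts by scalars on $L(\bfQ)$. The relations $\ms Q_{i,s}w_0=Q_{i,s}w_0$ imposed in $M(\bfQ)$ force, by inverting the exponential in \eqref{ms Qj}, each $\ttg_{i,\pm t}$ to act on the cyclic generator $w_0$ by a scalar; hence so does $\sfz^\pm_s=s\ttv^{\pm s}[s]_\ttv^{-1}(\ttg_{1,\pm s}+\cdots+\ttg_{n,\pm s})$, and by centrality that scalar action propagates to all of $M(\bfQ)$ and therefore to $L(\bfQ)$. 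This is cleaner and avoids the countable-dimension version of Schur's lemma. A second small point worth spelling out in a full write-up: when you identify $L(\bfQ)|_{\afUslv}$ with $\Lcp(\bfP)$ you must first argue that the highest-weight line of $L(\bfQ)$ carries an actual (not merely generalized) eigenvector for the $\ms P_{j,s}$; this is immediate here because the $\la$-weight space of $M(\bfQ)$, and hence of $L(\bfQ)$, is one-dimensional, but it deserves a sentence. With these points filled in, your outline is a faithful account of the \cite{FM} proof.
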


\subsection{}
Finally, we recall some results about affine quantum Schur algebras.
Let $\Qnpr=\{\bfQ\in\Qnp\mid\sum_{1\leq i\leq n}\deg Q_i(u)=r\}$.
\begin{Thm}\cite[Th. 4.6.8]{DDF}\label{classification of simple afSrv-modules}
For $\bfQ\in\Qnpr$ the $\afUglv$-module $L(\bfQ)$ can be regarded as an $\afSrv$-module via the map $\zeta_r$ defined in \eqref{zetar}. Furthermore, the set $\{L(\bfQ)\mid\bfQ\in\Qnpr\}$ is a complete set of nonisomorphic simple $\afSrv$-module.
\end{Thm}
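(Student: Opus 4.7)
The plan is to transport Theorem \ref{classification of simple afUglC-modules} through the surjective homomorphism $\zeta_r:\afUglv\ra\afSrv$ of \eqref{zetar}. The two directions require quite different arguments, and the overall statement splits naturally as: (i) every $L(\bfQ)$ with $\bfQ\in\Qnpr$ descends through $\zeta_r$, (ii) every simple $\afSrv$-module has the form $L(\bfQ)$ for some $\bfQ\in\Qnpr$, and (iii) nonisomorphism is automatic.

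For direction (ii), which is the easier half, let $L$ be a finite dimensional simple $\afSrv$-module. Inflating via $\zeta_r$ turns $L$ into a simple $\afUglv$-module. The key observation is that on the defining module $\Ogv^{\ot r}$ the generators $\ttk_i$ act through weights in the set $\Lanr$ of compositions of $r$ into $n$ nonnegative parts; in particular, $\ttk_1\ttk_2\cdots\ttk_n$ acts as the scalar $\ttv^r$. Because $\zeta_r$ is surjective onto $\End_{\afHrv}(\Ogv^{\otimes r})$, the idempotents $1_\la$ ($\la\in\Lanr$) projecting onto the weight spaces all lie in $\afSrv$ and their sum is the identity of $\afSrv$. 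Applied to $L$ this forces a weight space decomposition $L=\bigoplus_{\la\in\Lanr}L_\la$ indexed by $\Lanr$, so $L$ is polynomial of type $1$ with total weight $r$, hence an object of $\hFn$. By Theorem \ref{classification of simple afUglC-modules}, $L\cong L(\bfQ)$ for some $\bfQ\in\Qnp$, and the total-weight constraint $\sum_i\deg Q_i=r$ yields $\bfQ\in\Qnpr$.

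For direction (i), I must show that for each $\bfQ\in\Qnpr$ the simple module $L(\bfQ)$ is annihilated by $\ker\zeta_r$. The weight condition $\ttk_1\cdots\ttk_n-\ttv^r$ is necessary but far from sufficient, and the natural way to close the gap is the Weyl-module construction of §5: one builds a ``universal'' highest $\ell$-weight $\afSrv$-module $W(\bfQ)$ intrinsically inside $\afSrv$-mod (for example as the quotient of a suitable induced module from an affine Borel subalgebra of $\afSrv$ by the relations dictated by $\bfQ$), then shows that the inflation of $W(\bfQ)$ to $\afUglv$ surjects onto $L(\bfQ)$; since $W(\bfQ)$ is an $\afSrv$-module by construction, so is its simple head $L(\bfQ)$. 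An equivalent approach is to use the BLM-type monomial basis of $\afUglv$ from \cite{DDF}: decompose $\afUglv$ as a sum over ``weight $r$'' and ``weight $\ne r$'' pieces with respect to $\zeta_r$, and verify directly that the weight-$\ne r$ piece acts by zero on $L(\bfQ)$ because all $\Uglv$-weights of $L(\bfQ)$ lie in the level-$r$ simplex of $\mbnn$.

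Finally, (iii) is immediate: any $\afSrv$-module isomorphism $L(\bfQ)\cong L(\bfQ')$ is simultaneously a $\afUglv$-module isomorphism via $\zeta_r$, which by Theorem \ref{classification of simple afUglC-modules} forces $\bfQ=\bfQ'$. The main obstacle is clearly direction (i), the descent of $L(\bfQ)$ through $\zeta_r$; the level condition on $\ttk_1\cdots\ttk_n$ handles only the Cartan part of $\ker\zeta_r$, and one needs either the affine Weyl modules of §5 or an explicit presentation of $\afSrv$ in terms of $\afUglv$ generators to control the remaining relations in $\ker\zeta_r$.
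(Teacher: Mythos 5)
The paper does not prove this theorem: it is quoted verbatim as \cite[Th.~4.6.8]{DDF}, and the surrounding sections use it as a black box. So the comparison here is between your sketch and what a self-contained argument would need, not against a proof the paper supplies.

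Your decomposition into (i)/(ii)/(iii) and your diagnosis that (i) is the substantive step are both right, and (iii) is indeed immediate from surjectivity of $\zeta_r$ together with Theorem~\ref{classification of simple afUglC-modules}. But for (i) your two suggested routes are either circular or undeveloped. If by ``the Weyl-module construction of \S5'' you mean the modules $W(\bfQ)$ constructed in this paper, that is circular: Corollary~\ref{Weyl module for affine q-Schur algebras} (that $W(\bfQ)$ is an $\afSrv$-module) is proved \emph{from} Theorem~\ref{classification of simple afSrv-modules} via Lemmas~\ref{tensor of simple module} and \ref{Decomposition of L(bfQ)}. An ``intrinsic'' $\afSrv$-side construction would of course break the circle, but you do not say how to carry it out. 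The BLM/monomial-basis route is logically possible but is not what is done, and you give no indication of how to check that the non-level-$r$ part of $\ker\zeta_r$ annihilates $L(\bfQ)$. The argument one actually wants is the tensor-product realization that the paper itself leans on in \S4--5: the fundamental modules $L(\bfQ_{i,a})$ are subquotients of $\Ogv^{\ot i}$, hence naturally $\sS_\vtg(n,i)_v$-modules; the maps $\zeta_r$ are compatible with the coproduct in the sense that a tensor product of an $\sS_\vtg(n,k)_v$-module and an $\sS_\vtg(n,l)_v$-module is an $\sS_\vtg(n,k+l)_v$-module (\cite[Lem.~4.6.6]{DDF}, quoted as Lemma~\ref{tensor of simple module}); and every $L(\bfQ)$ with $\bfQ\in\Qnpr$ is a subquotient of $L(\bfQ_{i_1,a_1})\ot\cdots\ot L(\bfQ_{i_t,a_t})$ with $\sum i_k=r$ (cf.\ the citation in Corollary~\ref{l-weight for L(bfQ)} of \cite[Prop.~4.6.7]{DDF}). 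That is the missing engine behind your (i). Finally, a small point on (ii): establishing $L=\oplus_{\la\in\Lanr}L_\la$ with $\Lanr\subseteq\mbnn$ gives you that $L$ is polynomial of type~$1$, but to invoke Theorem~\ref{classification of simple afUglC-modules} you still need $L\in\hFn$, i.e.\ the $\ell$-weight decomposition $L=\oplus_{\bfQ\in\Qn}L_\bfQ$; for a finite dimensional simple module this follows from the Frenkel--Mukhin theory but should be cited rather than elided, and you cannot appeal to Corollary~\ref{finite dimensional afSrv modules} since the paper derives that corollary from the very theorem you are proving.
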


\begin{Lem}\label{belong to hFn}
Let $V$ be a finite dimensional $\afUglv$-module of type $1$ and let $W$ be a submodule of $V$. If $W,V/W\in\hFn$ then $V\in\hFn$.
\end{Lem}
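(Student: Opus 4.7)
The plan is to verify the two defining properties of $\hFn$-membership for $V$, namely that $V|_{\Uglv}$ is polynomial of type $1$ and that $V$ admits a generalized eigenspace decomposition indexed by $\Qn$, by exploiting the exactness of the relevant decompositions together with the hypotheses on $W$ and $V/W$.

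First I would verify that $V|_{\Uglv}$ is polynomial. Because $V$ is of type $1$ by hypothesis, $V = \bigoplus_{\la\in\mbzn} V_\la$. The weight-space functor $V \mapsto V_\la$ is exact since $V_\la$ is the simultaneous $\ttv^{\la_i}$-eigenspace of the commuting semisimple operators $\ttk_i$. Therefore the short exact sequence $0 \to W \to V \to V/W \to 0$ restricts to
\[
0 \to W_\la \to V_\la \to (V/W)_\la \to 0
\]
for each $\la \in \mbzn$. Since $W, V/W \in \hFn$, the outer terms vanish whenever $\la \notin \mbnn$, whence $V_\la = 0$ too. Thus $\wt(V) \subseteq \mbnn$, so $V|_{\Uglv}$ is polynomial of type $1$.

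Next I would establish the decomposition $V = \bigoplus_{\bfQ\in\Qn} V_\bfQ$. The operators $\ms Q_{i,s}$ commute among themselves and with the $\ttk_i$, and act on the finite dimensional weight space $V_\la$, so we have a simultaneous generalized eigenspace decomposition $V_\la = \bigoplus_{\ga} V_{(\la,\ga)}$, and likewise for $W_\la$ and $(V/W)_\la$. Functoriality of the generalized eigenspace decomposition under a commuting family of operators on finite-dimensional spaces — equivalently, the exactness of the spectral projections — yields a short exact sequence
\[
0 \to W_{(\la,\ga)} \to V_{(\la,\ga)} \to (V/W)_{(\la,\ga)} \to 0
\]
for every $\ga$. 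Hence $V_{(\la,\ga)} \ne 0$ forces $W_{(\la,\ga)} \ne 0$ or $(V/W)_{(\la,\ga)} \ne 0$. Because $W, V/W \in \hFn$, every nonzero generalized eigenspace occurring in them has eigenvalue sequence $\ga = (Q_{i,s})$ arising from some $\bfQ \in \Qn$ with $\deg\bfQ = \la$ via \eqref{Qis}; the same therefore holds for $V$. Summing over $\la$ gives $V = \bigoplus_{\bfQ \in \Qn} V_\bfQ$, and hence $V \in \hFn$.

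There is no substantive obstacle: the statement reduces to the exactness of two standard functors (weight-space projection and generalized-eigenspace projection) on finite dimensional modules, together with the finiteness hypothesis that forces these decompositions to be honest direct sums. The only point requiring a word of care is the exactness of the spectral projection for the infinite commuting family $\{\ms Q_{i,s}\}_{i,s}$, but on a finite dimensional module only finitely many of these operators produce new projections, so standard linear algebra applies.
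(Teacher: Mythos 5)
Your proposal is correct and follows essentially the same route as the paper's proof: both rest on the exactness of the generalized-eigenspace (spectral) projections, from which one deduces that every nonzero $V_{(\la,\ga)}$ coincides with some $V_{\bfQ}$ and hence $V=\bigoplus_{\bfQ\in\Qn}V_\bfQ$. The paper packages this as a dimension count $\dim V_{(\la,\ga)}=\dim W_{(\la,\ga)}+\dim(V/W)_{(\la,\ga)}$ and sums over $\bfQ$, whereas you argue pointwise that each nonzero generalized eigenspace of $V$ must already occur in $W$ or $V/W$; these are equivalent, and your separate check that $\wt(V)\subseteq\mbnn$ is subsumed in the paper's version since it follows from the $\bigoplus_{\bfQ}V_\bfQ$ decomposition.
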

\begin{proof}
It is easy to see that $\dim V_{(\la,\ga)}=\dim W_{(\la,\ga)}+\dim (V/W)_{(\la,\ga)}$ for each generalized eigenvalue $(\la,\ga)$. In particular, we have $\dim V_\bfQ=\dim W_\bfQ+\dim (V/W)_\bfQ$
for $\bfQ\in\Qn$. This together with the fact that $W,V/W\in\hFn$, implies that $\dim V=\dim W+\dim V/W=\sum_{\bfQ\in\Qn}(\dim W_\bfQ+\dim (V/W)_\bfQ)=\sum_{\bfQ\in\Qn}V_\bfQ$. It follows that $V=\oplus_{\bfQ\in\Qn}V_\bfQ$ and hence $V\in\hFn$.
\end{proof}

Combining Theorem \ref{classification of simple afSrv-modules} with Lemma \ref{belong to hFn} yields the following result.

\begin{Coro}\label{finite dimensional afSrv modules}
Let $V$ be a finite dimensional $\afSrv$-module. Then $V\in\hFn$.
\end{Coro}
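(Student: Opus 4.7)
The strategy is a straightforward two-step reduction: first verify the type~$1$ hypothesis needed to apply Lemma~\ref{belong to hFn}, then induct on dimension using the classification of simples in Theorem~\ref{classification of simple afSrv-modules} together with that lemma.

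For the first step, I would observe that any finite dimensional $\afSrv$-module $V$, inflated to an $\afUglv$-module via $\zeta_r$, is automatically of type~$1$. The reason is that $\afSrv$ contains a family of orthogonal idempotents $\{e_\la\mid\la\in\Lanr\}$ summing to the identity, with $\zeta_r(\ttk_i)=\sum_{\la\in\Lanr}\ttv^{\la_i}e_\la$. Hence each $\zeta_r(\ttk_i)$ acts semisimply on $V$ with eigenvalues of the form $\ttv^{\la_i}$ for $\la\in\Lanr$, giving a weight decomposition $V=\oplus_{\la\in\Lanr}V_\la$. Since $\Lanr\han\mbnn$, the module $V$ is in fact polynomial of type~$1$ as a $\Uglv$-module.

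For the second step, I would induct on $\dim V$. If $V$ is simple, then Theorem~\ref{classification of simple afSrv-modules} gives $V\cong L(\bfQ)$ for some $\bfQ\in\Qnpr$, and by Theorem~\ref{classification of simple afUglC-modules} this lies in $\hFn$. If $V$ is not simple, choose a nonzero proper $\afSrv$-submodule $W\subseteq V$; then both $W$ and $V/W$ are finite dimensional $\afSrv$-modules of strictly smaller dimension, so by the induction hypothesis they belong to $\hFn$. Since $V$ is of type~$1$ by the first step, Lemma~\ref{belong to hFn} then yields $V\in\hFn$, completing the induction.

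The only nontrivial point in the argument is the type~$1$ verification; the remainder is a formal induction once Lemma~\ref{belong to hFn} is available. I expect this step to be treated as essentially immediate from the known structure of $\afSrv$, perhaps invoking a reference to the idempotent decomposition rather than proving it afresh.
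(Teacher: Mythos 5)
Your proof is correct and follows the same route as the paper, which disposes of the corollary in a single sentence ("Combining Theorem \ref{classification of simple afSrv-modules} with Lemma \ref{belong to hFn}"). The one thing you add is an explicit check of the type-$1$ hypothesis required by Lemma \ref{belong to hFn}, using the idempotent decomposition $\zeta_r(\ttk_i)=\sum_{\la\in\Lanr}\ttv^{\la_i}e_\la$ to see that every inflated $\afSrv$-module is a polynomial type-$1$ $\Uglv$-module with weights in $\Lanr\subseteq\mbnn$; the paper leaves this point implicit. Your induction on dimension (base case via Theorem \ref{classification of simple afSrv-modules} and Theorem \ref{classification of simple afUglC-modules}, inductive step via Lemma \ref{belong to hFn}) is exactly what the paper's cryptic reference to those two results unpacks to.
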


\section{The set $\Xi_{n,r}$ and the $\ell$-weights of $L(\bfQ)$}

\subsection{}

Following \cite[3.3]{CM} we introduce the $\ell$-simple roots of $\afUslv$ as follows.
For $1\leq i\leq n-1$ and $a\in\mbc^*$ let
$$\al_{i,a}=\og_{i-1,av}^{-1}\og_{i,a}\og_{i,av^2}\og_{i+1,av}^{-1}\in\Pn,$$
where $\og_{i,a}$ is defined in \eqref{ogia}. The elements $\al_{i,a}$ are called the $\ell$-simple roots for $\afUslv$.

Recall from \eqref{Xn} the definition of $\Xn$.
For $1\leq i\leq n-1$ and $a\in\mbc^*$ let
$$\bt_{i,a}=\La_{i,a}\La_{i+1,a}^{-1}\in\Xn,$$
where $\La_{i,a}$ is defined in \eqref{Laia}.
Let $\Rn$ be the subgroup of $\Xn$ generated by the elements $\bt_{i,a}$ ($1\leq i\leq n-1$, $a\in\mbc^*$). We call $\bt_{i,a}$ the $\ell$-simple roots of $\afUglv$ and $\Rn$ the $\ell$-root lattice of $\afUglv$.
Let $\Rnp$ be the monoid generated by $1$ and the elements $\bt_{i,a}$ ($1\leq i\leq n-1$, $a\in\mbc^*$), and $\Rnm=(\Rnp)^{-1}$.

The $\ell$-simple roots of $\afUglv$ and that of $\afUslv$ are related by a map $\kappa_v$, which we now describe. Clearly, we have
$$\mbz[\Xn]=\mbz[\La_{i,a}^{\pm 1}\mid 1\leq i\leq n,\, a\in\mbc^*],$$
where $\mbz[\Xn]$ is the group ring associated with $\Xn$.
There is a natural ring homomorphism $$\kav:\mbz[\Xn]\ra\mbz[\Pn]$$
defined by sending $\bfQ=(Q_1(u),\cdots,Q_n(u))$
to $\bfP=(P_1(u),\cdots,P_{n-1}(u))$, where $P_i(u)=Q_i(\ttv^{i-1}u)/Q_{i+1}(\ttv^{i+1}u)$ for $1\leq i\leq n-1$. By definition we have $$\kav(\La_{i,a})=\og_{i,av^{i-1}}\og_{i-1,av^i}^{-1}$$ for $1\leq i\leq n$, where $\og_{0,b}=\og_{n,b}=1$. It follows that
$$\kav(\bt_{i,a})=\al_{i,av^{i-1}},$$
for $1\leq i\leq n-1$ and $a\in\mbc^*$.

\subsection{}
For $n\geq 2$ and $r\in\mbn$ let
\begin{equation}\label{Xinr}
\begin{split}
\Xi_n&=\{\ol{\bfQ}\in\Xn/\Rn\mid\bfQ\in\Qnp\},\\ \Xi_{n,r}&=\{\ol{\bfQ}\in\Xn/\Rn\mid\bfQ\in\Qnpr\}.
\end{split}
\end{equation}
We will prove in Theorem \ref{block} that $\Xi_n$ is the index set of the blocks in the category $\hFn$ and $\Xi_{n,r}$ is  the index set of the blocks in the category of finite dimensional $\afSrv$-modules.

We are now prepared to describe the sets $\Xi_n$ and $\Xi_{n,r}$ in Propostion \ref{vartheta iso} below.
For $m\leq n-1$, let $\sX_m$ be the subgroup of $\Xn$ generated by $\La_{i,a}$ for $1\leq i\leq m$ and $a\in\mbc^*$.
For $m\leq s\leq n-1$ let $\sY_{m,s}$ be the subgroup of $\Xn$ generated by $\bt_{i,a}$ for $m\leq i\leq s$ and $a\in\mbc^*$.

\begin{Lem}\label{Xm cap Yms}
For $m\leq s\leq n-1$ we have
$\sX_m\cap \sY_{m,s}=\{{\bf 1}\}$, where ${\bf 1}=(1,\cdots,1)$ is the identity element in $\Xn$.
\end{Lem}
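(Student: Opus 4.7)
The proof plan rests on the fact (already noted in the excerpt, just after \eqref{Laia}) that $\Xn$ is a \emph{free} abelian group on the symbols $\La_{i,a}$ for $1\leq i\leq n$, $a\in\mbc^*$. Once we have freeness, the result becomes a bookkeeping argument about which $\La_{j,a}$'s can appear with nonzero exponent in an element of each subgroup.

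First I would describe the two sides explicitly. An arbitrary element $x\in\sX_m$ is, by definition, a finite product $\prod_{i\le m,\,a}\La_{i,a}^{c_{i,a}}$ with $c_{i,a}\in\mbz$, so in the free abelian basis $\{\La_{j,a}\}$ the exponent of $\La_{j,a}$ is $0$ whenever $j>m$. On the other hand, an arbitrary $y\in\sY_{m,s}$ can be written as
\[
y=\prod_{a\in\mbc^*}\prod_{i=m}^{s}\bt_{i,a}^{e_{i,a}}
=\prod_{a\in\mbc^*}\prod_{i=m}^{s}\bigl(\La_{i,a}\La_{i+1,a}^{-1}\bigr)^{e_{i,a}},
\]
with all but finitely many $e_{i,a}$ equal to $0$. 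Expanding and collecting, the exponent of $\La_{j,a}$ in $y$ is
\[
\begin{cases}
e_{m,a}, & j=m,\\
e_{j,a}-e_{j-1,a}, & m<j\le s,\\
-e_{s,a}, & j=s+1,\\
0, & \text{otherwise.}
\end{cases}
\]

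Second, I would impose $y\in\sX_m$ and read off a triangular system one $a\in\mbc^*$ at a time. Freeness of $\Xn$ forces the exponents of $\La_{j,a}$ for $j>m$ to vanish, which gives (for each fixed $a$) the recursion $e_{j,a}=e_{j-1,a}$ for $m<j\le s$ together with the boundary condition $e_{s,a}=0$; unwinding these yields $e_{m,a}=e_{m+1,a}=\cdots=e_{s,a}=0$. Consequently $y=\mathbf 1$, proving $\sX_m\cap\sY_{m,s}=\{\mathbf 1\}$.

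I do not expect any real obstacle: the proof is essentially the observation that the substitution $\bt_{i,a}\mapsto \La_{i,a}\La_{i+1,a}^{-1}$ introduces a new generator $\La_{s+1,a}$ (and shifts indices) that is not available inside $\sX_m$, so the only common element is trivial. The one thing to be slightly careful about is to stress that the different values of $a\in\mbc^*$ decouple, so the problem reduces (for each $a$) to a finite linear-algebra argument in the free abelian group on $\La_{m,a},\La_{m+1,a},\ldots,\La_{s+1,a}$.
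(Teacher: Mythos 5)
Your proof is correct, and it rests on exactly the same key fact as the paper's: freeness of $\Xn$ on the basis $\{\La_{i,a}\}$. The execution differs slightly in style rather than substance. The paper proceeds by downward induction on $s$, writing $\sY_{m,s}=\sY_{m,s-1}\sY_{s,s}$, observing that the exponents of $\La_{s+1,a}$ (which cannot occur in $\sX_s\supseteq\sX_m\cup\sY_{m,s-1}$) must vanish, thereby reducing to $\sY_{m,s-1}$ and ultimately to the base case $\sX_m\cap\sY_{m,m}=\{\mathbf 1\}$. You instead expand a general element of $\sY_{m,s}$ in the free basis all at once, read off the exponent of each $\La_{j,a}$, and solve the resulting triangular system $e_{s,a}=0$, $e_{j,a}=e_{j-1,a}$ for $m<j\leq s$ in one pass. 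The two arguments are equivalent: the paper's induction step is precisely the observation that $e_{s,a}=0$, applied one level at a time, while your version does the full back-substitution explicitly. Your direct presentation is arguably cleaner since it avoids the inductive scaffolding and makes visible that the different $a\in\mbc^*$ decouple, but there is no difference in the underlying idea or in what is being used.
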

\begin{proof}
Assume $m+1\leq s\leq n-1$. Let $x\in \sX_m\cap \sY_{m,s}$.
Since $\sY_{m,s}=\sY_{m,s-1}\sY_{s,s}$, there exist $y\in\sY_{m,s-1}$, $a_1,\cdots,a_t\in\mbc^*$ and $k_1,\cdots,k_t\in\mbz$ such that
$$x=y\cdot\bt_{s,a_1}^{k_1}\cdots\bt_{s,a_t}^{k_t}=y\cdot\La^{k_1}_{s,a_1}\cdots
\La_{s,a_t}^{k_t}\La_{s+1,a_1}^{-k_1}\cdots\La_{s+1,a_t}^{-k_t}.$$
Since $y\in \sY_{m,s-1}\han \sX_s$ and $x\in \sX_m\han \sX_s$ we conclude that
$$\La_{s+1,a_1}^{k_1}\cdots\La_{s+1,a_t}^{k_t}=y\cdot\La^{k_1}_{s,a_1}\cdots
\La_{s,a_t}^{k_t}\cdot x^{-1}\in \sX_s.$$ This, together with the fact that
$\Xn$ is generated freely
as an abelian group by the elements $\La_{i,a}$ ($1\leq i\leq n$, $a\in\mbc^*$), implies that $k_i=0$ for $1\leq i\leq t$. It follows that $x=y\in \sY_{m,s-1}$. This shows that $\sX_m\cap \sY_{m,s}\han \sX_m\cap \sY_{m,s-1}$ for $m+1\leq s\leq n-1$. Thus, $\sX_m\cap \sY_{m,s}\han \sX_m\cap \sY_{m,m}$ for $m\leq s\leq n-1$.
Furthermore, since $\Xn$ is generated freely
as an abelian group by the elements $\La_{i,a}$, $1\leq i\leq n$, $a\in\mbc^*$, we conclude that
$\sX_m\cap \sY_{m,m}=\{{\bf 1}\}.$ Consequently, we have $\sX_m\cap \sY_{m,s}=\{{\bf 1}\}$ for $m\leq s\leq n-1$.
\end{proof}

Recall from \S3 that $\mba=\{\frac{f(u)}{g(u)}\in\mbc(u)\mid f(u),g(u)\in\mbc[u],\,f(0)=g(0)=1\}$
and $\mba$ is a group under multiplication. There is a natural group homomorphism
\begin{equation}\label{vartheta}
\vartheta:\mba\ra\Xn/\Rn
\end{equation}
defined by sending $Q(u)$ to $\ol{(Q(u),1,\cdots,1)}$.
Let
\begin{equation*}
\begin{split}
\Ga&=\{Q(u)\in\mbc[u]\mid Q(0)=1\},\\
\Ga_r&=\{ Q(u)\in\Ga\mid\deg(Q(u))=r\}.
\end{split}
\end{equation*}

\begin{Prop}\label{vartheta iso}
(1) The map $\vartheta$ defined in \eqref{vartheta} is a group isomorphism. Thus the group $\Xn/\Rn$ is a free abelian group and the set $\{\ol{\La_{1,a}}\mid a\in\mbc^*\}$ is a basis of $\Xn/\Rn$.

(2) We have $\vartheta(\Ga)=\Xi_n$ and $\vartheta(\Ga_r)=\Xi_{n,r}$ for all $r$.

(3) Let $\bfQ=(Q_1(u),\cdots,Q_n(u))\in\Qnpr$ and $\bfQ'=(Q_1'(u),\cdots,Q_n'(u))\in\Qnpt$.
Then $\ol{\bfQ}=\ol{\bfQ'}$ if and only if $r=t$ and $\prod_{1\leq i\leq n}Q_i(u)=\prod_{1\leq i\leq n}Q_i'(u)$.
\end{Prop}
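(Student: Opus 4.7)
The plan is to derive all three parts from Lemma \ref{Xm cap Yms} together with the freeness of $\Xn$ on the elements $\La_{i,a}$. First I observe that $\vartheta$ is patently a group homomorphism, and that in $\Xn/\Rn$ the defining relation $\bt_{i,a}=\La_{i,a}\La_{i+1,a}^{-1}\in\Rn$ forces $\ol{\La_{i,a}}=\ol{\La_{i+1,a}}$ for every $1\le i\le n-1$ and $a\in\mbc^*$. Iterating, $\ol{\La_{i,a}}=\ol{\La_{1,a}}$ for every $i$. Writing an arbitrary $\bfQ=(Q_1,\ldots,Q_n)\in\Xn$ as a product of generators $\La_{i,a}^{k}$ and collapsing each $\La_{i,a}$ to $\La_{1,a}$ modulo $\Rn$ yields the key identity
$$\ol{\bfQ}=\vartheta\!\left(\prod_{i=1}^{n}Q_i(u)\right),$$
which already proves surjectivity of $\vartheta$.

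For injectivity, I would make the two tautological identifications $\Rn=\sY_{1,n-1}$ and $\{(Q(u),1,\ldots,1)\mid Q(u)\in\mba\}=\sX_1$; both are immediate from the definitions. Lemma \ref{Xm cap Yms} applied with $m=1$, $s=n-1$ then gives $\sX_1\cap\sY_{1,n-1}=\{{\bf 1}\}$, so the first-coordinate embedding $\mba\hookrightarrow\Xn$ descends to an injection into $\Xn/\Rn$, and this injection is exactly $\vartheta$. Hence $\vartheta$ is an isomorphism; the basis assertion follows because $\mba$ is itself freely generated as an abelian group by $\{1-au\mid a\in\mbc^*\}$, and $\vartheta(1-au)=\ol{\La_{1,a}}$.

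Given the explicit formula displayed above, parts (2) and (3) are bookkeeping. For (2), the inclusion $\vartheta(\Ga)\subseteq\Xi_n$ holds because $(Q(u),1,\ldots,1)$ is trivially dominant for any polynomial $Q(u)\in\Ga$: the only ratio that is not identically $1$ is $Q_1(u)/Q_2(\ttv^2u)=Q(u)$, a polynomial. Conversely, for $\bfQ\in\Qnp$ the product $\prod_{i=1}^n Q_i(u)$ is a polynomial with constant term $1$ and hence lies in $\Ga$, and the identity shows $\ol{\bfQ}=\vartheta(\prod_i Q_i)\in\vartheta(\Ga)$; the degree is preserved, so the refined statement $\vartheta(\Ga_r)=\Xi_{n,r}$ comes for free. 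Part (3) is then immediate: by injectivity of $\vartheta$, $\ol{\bfQ}=\ol{\bfQ'}$ is equivalent to $\prod_i Q_i(u)=\prod_i Q_i'(u)$, which upon comparing degrees also forces $r=t$. I do not anticipate any serious obstacle, since Lemma \ref{Xm cap Yms} already carries the technical weight; the only thing to notice is the dictionary between $\Rn$, $\sX_1$, $\sY_{1,n-1}$ and $\vartheta$, after which the proposition unfolds mechanically.
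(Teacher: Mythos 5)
Your proposal is correct and follows essentially the same line as the paper: surjectivity from the relations $\ol{\La_{i,a}}=\ol{\La_{1,a}}$, injectivity from Lemma \ref{Xm cap Yms} applied as $\sX_1\cap\Rn=\sX_1\cap\sY_{1,n-1}=\{\bf 1\}$, and parts (2) and (3) read off from the identity $\ol{\bfQ}=\vartheta(\prod_i Q_i(u))$. Your version merely makes explicit the dictionary $\Rn=\sY_{1,n-1}$, $\sX_1=\{(Q(u),1,\ldots,1)\}$ and the dominance check for $(Q(u),1,\ldots,1)$, which the paper leaves implicit.
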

\begin{proof}
Since $\ol{\La_{i,a}}=\ol{\La_{i+1,a}}$ for $1\leq i\leq n-1$,  the group $\Xn/\Rn$ is generated by the elements $\ol{\La_{1,a}}$ for $a\in\mbc^*$. This implies that $\vartheta$ is surjective. Let $Q(u)=(1-a_1u)^{m_1}\cdots (1-a_tu)^{m_t}\in\ker\vartheta$, where $a_i\in\mbc^*$ and $m_i\in\mbz$. Then by Lemma \ref{Xm cap Yms} we have $\La_{1,a_1}^{m_1}\cdots\La_{1,a_t}^{m_t}\in\sX_1\cap\Rn=\sX_1\cap \sY_{1,n-1}=\{{\bf1}\}$. This implies that $m_i=0$ for all $i$ and hence $\vartheta$ is injective. The assertion (1) follows.

It is clear that $\vartheta(\Ga)\han\Xi_{n}$ and $\vartheta(\Ga_r)\han\Xi_{n,r}$ for all $r$. On the other hand, for   $\bfQ=(Q_1(u),\cdots,Q_n(u))\in\Qnpr$, let
$Q(u)=\prod_{1\leq i\leq n}Q_i(u)$. Then $Q(u)\in\Ga_r$.
For $1\leq i\leq n$, we write
$$Q_i(u)=\prod_{k_{i-1}+1\leq s\leq k_i}(1-a_{s}u),
$$
where $k_{i}=\sum_{1\leq s\leq i}\deg(Q_i(u))$ and $k_0=1$.
Since $\ol{\La_{1,a}}=\ol{\La_{i,a}}$ for $1\leq i\leq n$ we conclude that
\begin{equation}\label{barQ}
\ol{\bfQ}=\prod_{1\leq i\leq n\atop k_{i-1}+1\leq s\leq k_i}\ol{\La_{i,a_s}}=\prod_{1\leq s\leq r}\ol{\La_{1,a_s}}=\vartheta(Q(u))\in\vartheta(\Ga_r),
\end{equation}
and hence the assertion (2) follows.
The assertion (3) follows from \eqref{barQ} and (1).
\end{proof}

\subsection{}
For $V\in\hFn$ let $$\wtl(V)=\{\bfQ\in\Qn\mid V_\bfQ\not=0\}.$$
The element $\bfQ$ in $\wtl(V)$ is called an $\ell$-weight of $V$. We will prove in Corollary \ref{l-weight for L(bfQ)} that
$\wtl(L(\bfQ))\han\bfQ\Rnm$ for $\bfQ\in\Qnp$.

Following \cite{FM}, for $V\in\hFn$, the $v$-character of $V$ is defined as follows
$$\ch (V)=\sum_{\bfQ\in\Qn}\dim V_\bfQ \bfQ\in \mbz[\La_{i,a}\mid 1\leq i\leq n,\, a\in\mbc^*]\han\mbz[\Xn].$$
For $1\leq i\leq n$ and $a\in\mbc^*$, define
$\bfQ_{i,a}=(Q_1(u),\cdots,Q_n(u))\in\Qnp$ by setting $Q_n(u)=(1-av^{-n+1}u)^{\dt_{i,n}}$ and
$$\frac{Q_j(u\ttv^{j-1})}
{Q_{j+1}(u\ttv^{j+1})}=(1-au)^{\dt_{i,j}}$$ for $1\leq j\leq n-1$.
Then $\Qnp$ is generated by the elements $\bfQ_{i,a}$ ($1\leq i\leq n$, $a\in\mbc^*$) as a monoid.
According to \cite[(4.10)]{FM} we have
\begin{equation}\label{ch(L(Qia))}
\ch (L(\bfQ_{i,a}))=\sum_{1\leq j_1<\cdots<j_i\leq n}\La_{j_1,av^{i-1}}\La_{j_2,av^{i-3}}\cdots\La_{j_i,av^{1-i}}.
\end{equation}

For $r\geq 0$  we set
$$\Lanr=\big\{\la\in\mbnn\mid\sum_{1\leq i\leq n}\la_i=r\big\}.$$ Let $\La^+(n,r)=\{\la\in\Lanr\mid\la_i\geq\la_{i+1},\,1\leq i\leq n-1\}$. For $\la,\mu\in\Lanr$ write
$\la\unlhd\mu$ if $\sum_{j=1}^i\la_j\leq\sum_{j=1}^i\mu_j$ for $1\leq i\leq n$.

\begin{Lem}\label{l-weight for L(Qia)}
For $1\leq i\leq n$ and $a\in\mbc^*$ we have
$\wtl(L(\bfQ_{i,a}))\han\bfQ_{i,a}\Rnm$.
\end{Lem}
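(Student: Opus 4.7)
The plan is to combine the character formula \eqref{ch(L(Qia))} with an explicit expression for $\bfQ_{i,a}$ as a monomial in $\Xn$. The key observation will be that the index tuple $(1,2,\ldots,i)$ corresponding to $\bfQ_{i,a}$ itself is the ``bottom'' among all those appearing in \eqref{ch(L(Qia))}, so passing from it to any other tuple $(j_1,\ldots,j_i)$ with $j_1<\cdots<j_i$ only shifts indices upward, and each upward shift of $\La_{k,b}$ at a fixed $b$ is, by definition, a negative $\ell$-simple root.

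First I would solve the recurrence encoded by the defining relations $Q_n(u)=(1-av^{1-n}u)^{\dt_{i,n}}$ and $Q_j(uv^{j-1})/Q_{j+1}(uv^{j+1})=(1-au)^{\dt_{i,j}}$ via downward induction on $j$. A routine calculation gives $Q_j(u)=1$ for $j>i$ and $Q_j(u)=1-av^{i+1-2j}u$ for $1\le j\le i$, yielding
$$\bfQ_{i,a}=\prod_{k=1}^{i}\La_{k,\,av^{i+1-2k}}\quad\text{in }\Xn,$$
which is precisely the summand in \eqref{ch(L(Qia))} corresponding to $(j_1,\ldots,j_i)=(1,2,\ldots,i)$.

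Next, for any admissible tuple $\bfj=(j_1,\ldots,j_i)$ with $1\le j_1<\cdots<j_i\le n$, I would form the ratio
$$M(\bfj)\,\bfQ_{i,a}^{-1}=\prod_{k=1}^{i}\La_{j_k,\,av^{i+1-2k}}\,\La_{k,\,av^{i+1-2k}}^{-1},$$
and note that $j_k\ge k$ holds automatically from $j_1<j_2<\cdots<j_i$. The telescoping identity
$$\La_{m,b}\La_{k,b}^{-1}=\bt_{m-1,b}^{-1}\bt_{m-2,b}^{-1}\cdots\bt_{k,b}^{-1}\quad(m\ge k),$$
which follows at once from $\bt_{s,b}=\La_{s,b}\La_{s+1,b}^{-1}$, exhibits each factor as a product of generators of $\Rnm$. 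Since $\Rnm$ is closed under multiplication, the full product lies in $\Rnm$, so $M(\bfj)\in\bfQ_{i,a}\Rnm$. Because \eqref{ch(L(Qia))} enumerates all $\ell$-weights of $L(\bfQ_{i,a})$, this gives the claim.

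The only step requiring any genuine calculation is solving the recurrence in step one; if one mis-tracks a power of $v$ there, the monomial factorization will fail to match \eqref{ch(L(Qia))} and the whole argument collapses. Everything after that is transparent telescoping, so I anticipate no further obstacle.
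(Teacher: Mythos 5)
Your proof is correct and follows essentially the same route as the paper's: both rest on the character formula \eqref{ch(L(Qia))} together with the telescoping identity $\La_{m,b}\La_{k,b}^{-1}=\prod_{s=k}^{m-1}\bt_{s,b}^{-1}$. The only difference is organizational --- the paper wraps the telescoping in an induction on a dominance-type order on the index tuples, lowering one entry at a time, whereas you compute $M(\bfj)\bfQ_{i,a}^{-1}$ in a single stroke; your version is slightly cleaner and also explicitly verifies $\bfQ_{i,a}=\prod_{k=1}^{i}\La_{k,av^{i+1-2k}}$, which the paper asserts without proof.
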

\begin{proof}
Let $\sM_i=\{\bfj=(j_1,\cdots,j_i)\in\mbnn\mid 1\leq j_1<\cdots<j_i\leq n\}$.
For $\bfj\in\sM_i$ and $a\in\mbc^*$ let
where $$\La_{\bfj,a}=\La_{j_1,av^{i-1}}\La_{j_2,av^{i-3}}\cdots\La_{j_i,av^{1-i}}.$$
According to \eqref{ch(L(Qia))}, we have $\wtl(L(\bfQ_{i,a}))=\{\La_{\bfj,a}\mid\bfj\in\sM_i\}$. Thus
we have to show that $\La_{\bfj,a}\in
\bfQ_{i,a}\Rnm$ for $\bfj\in\sM_i$.  For $1\leq i\leq n$ let
\begin{equation}\label{ei}
\bse_i=(0,\cdots,0,\underset
{(i)}1,0,\cdots,0)\in\La(n,1).
\end{equation}
For $\bfj,\bfj'\in\sM_i$ we write $$\bfj\geq\bfj' \Leftrightarrow \bse_{j_1}+\bse_{j_2}+\cdots+\bse_{j_i}\unlhd\bse_{j_1'}+\bse_{j_2'}+\cdots+\bse_{j_i'}.$$ Then $(1,2,\cdots,i,0,\cdots,0)\leq\bfj$ for $\bfj\in\sM_i$.
We proceed by induction on the ordering $\leq$ on $\sM_i$. Clearly, we have $\La_{(1,2,\cdots,i,0,\cdots,0),a}=\bfQ_{i,a}\in\bfQ_{i,a}\Rnm$. Assume now that
$\bfj\in\sM_i$ and $\bfj\not=(1,2,\cdots,i,0,\cdots,0)$. Let $\la=\bse_{j_1}+\bse_{j_2}+\cdots+\bse_{j_i}$. Then $\la\not\in\La^+(n,i)$ and hence there exists $1\leq k\leq n-1$ such that $0\leq\la_k<\la_{k+1}\leq 1$. It follows that $\la_k=0$ and $\la_{k+1}=1$. Since $\la=\bse_{j_1}+\bse_{j_2}+\cdots+\bse_{j_i}$ and $\la_{k+1}=1$, there exists $1\leq s\leq i$ such that $j_s=k+1$. Let $\bfj'=(j_1,\cdots,j_{s-1},j_s-1,j_{s+1},\cdots,j_i)$.
Since $\la_{j_{s-1}}=1$ and $\la_{j_s-1}=\la_k=0$ we have $j_s-1\not=j_{s-1}$. This together with the fact that $j_{s-1}<j_s$, implies that $j_{s-1}<j_s-1$. Consequently, $\bfj'\in\sM_i$.
Let $\mu=\bse_{j_1'}+\bse_{j_2'}+\cdots+\bse_{j_i'}$. Then $\mu=\la+\bse_k-\bse_{k+1} \rhd\la$ and hence $\bfj>\bfj'$. By induction we have $\La_{\bfj',a}\in\bfQ_{i,a}\Rnm$. Furthermore we have  $$\La_{\bfj,a}=\La_{\bfj',a}\bt_{j_s-1,av^{-2s+i+1}}^{-1}
=\La_{\bfj',a}\bt_{k,av^{-2s+i+1}}^{-1}.$$ Thus we conclude that $\La_{\bfj,a}\in\bfQ_{i,a}\Rnm$. The proof is completed.
\end{proof}

\begin{Coro}\label{l-weight for L(bfQ)}
For $\bfQ\in\Qnp$ we have $\wtl(L(\bfQ))\han\bfQ\Rnm$.
\end{Coro}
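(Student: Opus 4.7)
The plan is to reduce the statement to Lemma \ref{l-weight for L(Qia)} by decomposing $\bfQ$ into the fundamental generators $\bfQ_{i,a}$ and invoking multiplicativity of the $v$-character on tensor products. Since $\Qnp$ is generated as a monoid by the elements $\bfQ_{i,a}$ for $1\leq i\leq n$ and $a\in\mbc^*$, I would write
$$\bfQ=\bfQ_{i_1,a_1}\bfQ_{i_2,a_2}\cdots\bfQ_{i_m,a_m}$$
and then form the tensor product
$$V=L(\bfQ_{i_1,a_1})\ot L(\bfQ_{i_2,a_2})\ot\cdots\ot L(\bfQ_{i_m,a_m}),$$
which is a representation in $\hFn$ via the coproduct $\Delta$ of $\afUglv$.

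Next, if $v_k\in L(\bfQ_{i_k,a_k})$ denotes the highest $\ell$-weight vector (the image of $1$ in $M(\bfQ_{i_k,a_k})$), then $v=v_1\ot\cdots\ot v_m$ is annihilated by all $\ttx^+_{j,s}$ and generates a highest $\ell$-weight submodule $W\han V$ of highest $\ell$-weight $\prod_k\bfQ_{i_k,a_k}=\bfQ$. Since the unique simple quotient of such a highest $\ell$-weight module is $L(\bfQ)$ by Theorem \ref{classification of simple afUglC-modules}, it follows that $L(\bfQ)$ is a subquotient of $V$, and hence $\wtl(L(\bfQ))\han\wtl(V)$.

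I would then invoke multiplicativity of the $v$-character on tensor products, giving
$$\ch(V)=\prod_{k=1}^m\ch(L(\bfQ_{i_k,a_k}))\in\mbz[\Xn],$$
and consequently $\wtl(V)\han\prod_k\wtl(L(\bfQ_{i_k,a_k}))$. By Lemma \ref{l-weight for L(Qia)}, $\wtl(L(\bfQ_{i_k,a_k}))\han\bfQ_{i_k,a_k}\Rnm$ for each $k$. Because $\Rnm$ is closed under multiplication, we get
$$\wtl(L(\bfQ))\han\wtl(V)\han\prod_{k=1}^m\bfQ_{i_k,a_k}\Rnm=\bfQ\cdot\Rnm,$$
which is exactly the required containment.

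The main obstacle is justifying multiplicativity of the $v$-character, equivalently the fact that for any $X,Y\in\hFn$ one has $\wtl(X\ot Y)\han\wtl(X)\cdot\wtl(Y)$. This rests on the triangularity of $\Delta$ on the commutative Cartan-type subalgebra generated by the $\ms Q_{i,s}$: modulo an ideal built from products of $\ttx^\pm_{j,t}$ (which strictly alter the $\mbz^n$-weight grading and so vanish on a pure tensor of weight vectors of a given total weight), one has $\Delta(\ms Q_i^{\pm}(u))\equiv\ms Q_i^{\pm}(u)\ot\ms Q_i^{\pm}(u)$, so the generalized eigenvalue of each $\ms Q_{i,s}$ on a tensor $v\ot w$ of $\ell$-weight vectors is the product of the eigenvalues on $v$ and $w$. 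This is the standard Frenkel--Reshetikhin triangularity in the Drinfeld realization, and once recorded it supplies the missing step; everything else in the argument is a straightforward bookkeeping chase through the monoid $\Qnp$ combined with Lemma \ref{l-weight for L(Qia)}.
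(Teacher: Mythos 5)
Your proof follows essentially the same route as the paper's: decompose $\bfQ$ into the fundamental generators $\bfQ_{i_k,a_k}$, realize $L(\bfQ)$ as a subquotient of the tensor product $\bigotimes_k L(\bfQ_{i_k,a_k})$, and combine Lemma \ref{l-weight for L(Qia)} with multiplicativity of the $v$-character (which the paper cites as \cite[Lem.~4.1]{FM}, and the paper cites \cite[Prop.~4.6.7]{DDF} for the subquotient step that you re-derive). The extra detail you supply for these two cited facts is correct, so your argument is a faithful expansion of the same proof.
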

\begin{proof}
According to the proof of \cite[Prop. 4.6.7]{DDF} we know that
there exist $1\leq i_1,\cdots,i_r\leq n$ and $a_1,\cdots,a_r\in\mbc^*$ such that $\bfQ=\bfQ_{i_1,a_1}\cdots\bfQ_{i_r,a_r}$ and $L(\bfQ)$ is a subquotient module of $L(\bfQ_{i_1,a_1})\ot L(\bfQ_{i_2,a_2})\ot\cdots\ot L(\bfQ_{i_r,a_r})$.  Thus by \cite[Lem. 4.1]{FM} and \ref{l-weight for L(Qia)} we conclude that
$\wtl(L(\bfQ))\han\wtl(L(\bfQ_{i_1,a_1}))\wtl(L(\bfQ_{i_2,a_2}))
\cdots\wtl(L(\bfQ_{i_r,a_r}))\han\bfQ_{i_1,a_1}\cdots\bfQ_{i_r,a_r}\Rnm=\bfQ\Rnm$.
\end{proof}


\section{Weyl modules for affine quantum Schur algebras}

\subsection{}
For $\bfP\in\Pnp$ let $\Jcp(\bfP)$ be the left ideal of $\afUslv$ generated by $\ttx_{j,s}^+ ,\ms P_{j,s}-P_{j,s},$  $\ti\ttk_j-v^{\mu_j}$, and $(\ttx_{j,s}^-)^{\mu_j+1}$ for $1\leq j\leq n-1$ and $s\in\mbz$, where $\mu_j=\deg(P_j(u))$, $\ms P_{j,s}$ is defined in \eqref{ms Pj} and $P_{j,s}$ is defined using \eqref{Pis}. For $\bfP\in\Pnp$ let
$$\Wcp(\bfP)=\afUslv/\Jcp(\bfP).$$
The modules $\Wcp(\bfP)$ are called Weyl modules for $\afUslv$. It was proved in  \cite{CP01} that for each $\bfP\in\Pnp$, $\Wcp(\bfP)$ is a finite dimensional $\afUslv$-module of type $1$.

\begin{Thm}[\cite{CM}]\label{Decomposition of Weyl modules for affine sln}
For $\bfP\in\Pnp$ there exist $1\leq i_1,i_2,\cdots,i_k\leq n-1$ and $a_1,a_2,\cdots,a_k\in\mbc^*$ such that $\Wcp(\bfP)\cong\Lcp(\og_{i_1,a_1})\ot
\Lcp(\og_{i_2,a_2})\ot\cdots
\ot \Lcp(\og_{i_k,a_k})$.
\end{Thm}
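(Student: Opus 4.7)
The plan is a standard universal-property strategy for Weyl modules. Since $\Pnp$ is the free monoid on the fundamental $\ell$-weights $\og_{i,a}$ ($1\leq i\leq n-1$, $a\in\mbc^*$), we can always write $\bfP=\og_{i_1,a_1}\og_{i_2,a_2}\cdots\og_{i_k,a_k}$. For this choice I would then compare $\Wcp(\bfP)$ with the explicit tensor product $T:=\Lcp(\og_{i_1,a_1})\ot\Lcp(\og_{i_2,a_2})\ot\cdots\ot\Lcp(\og_{i_k,a_k})$. By Chari--Pressley each fundamental module $\Lcp(\og_{i,a})$ is finite dimensional of type $1$, so $T$ is finite dimensional of type $1$; it therefore suffices to realise it as a highest-$\ell$-weight quotient of $\Wcp(\bfP)$.

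The first real step is to produce a generator of $T$ of $\ell$-weight $\bfP$. Let $v_j\in\Lcp(\og_{i_j,a_j})$ be the canonical highest-$\ell$-weight vector and set $v=v_1\ot v_2\ot\cdots\ot v_k$. A direct computation with the coproduct \eqref{Hopf} (applied inductively to $\Dt^{(k-1)}$) shows that $\ttx^+_{j,s}\cdot v=0$, that $\ti\ttk_j$ acts on $v$ by $v^{\mu_j}$ with $\mu_j=\dg P_j(u)$, and that the generating series $\ms P^\pm_j(u)$ acts on $v$ by the scalar series $P^\pm_j(u)$ (this is the multiplicativity of Drinfeld polynomials under tensor products, and it is exactly here that the factorisation of $\bfP$ as a product of the $\og_{i_j,a_j}$ is used). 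The integrability condition $(\ttx^-_{j,s})^{\mu_j+1}v=0$ then follows from finite dimensionality of $T$ together with the type $1$ condition, since $v$ generates an integrable $U_v(\widehat{\mathfrak{sl}}_2)$-highest weight module of highest weight $\mu_j$ in each Chevalley direction.

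Together these verify precisely the defining relations of $\Jcp(\bfP)$, so the universal property of $\Wcp(\bfP)=\afUslv/\Jcp(\bfP)$ supplies a well-defined $\afUslv$-linear map $\vi:\Wcp(\bfP)\ra T$, $\overline{1}\mapsto v$. Its image is the submodule $\afUslv\cdot v$. To finish, one establishes two complementary statements: first, cyclicity of $T$, i.e.\ $T=\afUslv\cdot v$, which makes $\vi$ surjective; second, $\dim\Wcp(\bfP)=\prod_{j=1}^k\dim\Lcp(\og_{i_j,a_j})=\dim T$, which forces $\vi$ to be an isomorphism. Both ingredients are in the literature: the required dimension formula for Weyl modules is the main theorem of Chari--Pressley \cite{CP01}, while cyclicity of the tensor product of fundamentals (after possibly reordering the factors, which does not affect the isomorphism class of $T$ up to choosing the indexing) is a result of Chari.

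The main obstacle is the cyclicity of $T$. Tensor products of fundamental representations of quantum affine algebras are not automatically cyclic: the ordering of the factors matters, and one typically needs to arrange the spectral parameters $a_j$ so that no ``resonance'' between consecutive tensor factors destroys the highest-weight property of $v$. The cleanest way I would handle this is to invoke Chari's cyclicity criterion (reorder the tensor factors so that $a_j/a_{j'}\notin v^{\mbz_{>0}}\cdot(\text{spectrum of poles})$ whenever $j<j'$), which guarantees that $v$ generates $T$; since the statement of the theorem only asserts the existence of some ordering, this reordering is harmless, and the combination with the dimension count then completes the proof.
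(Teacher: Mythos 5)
The paper states this theorem as a quotation from \cite{CM} and gives no proof of its own, so there is nothing internal to compare against; your argument is correct and is essentially the proof given in \cite{CM}: realise $T=\Lcp(\og_{i_1,a_1})\ot\cdots\ot\Lcp(\og_{i_k,a_k})$ as a quotient of $\Wcp(\bfP)$ via the universal property, use the cyclicity theorem (Chari, Kashiwara, Varagnolo--Vasserot) to reorder the factors so that $v_1\ot\cdots\ot v_k$ generates $T$, and conclude by the dimension formula of \cite{CP01}. One small simplification: the vanishing $\ttx^+_{j,s}(v_1\ot\cdots\ot v_k)=0$ needs no coproduct computation, since $\ttx^+_{j,s}$ raises the ordinary $\mathfrak{gl}_n$-weight by $\al_j$ and the top weight space of $T$ is one-dimensional, so the only delicate coproduct input is the multiplicativity of the eigenvalues of $\ms P_j^\pm(u)$.
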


\subsection{}
For $\bfQ\in\Qnp$ let $J(\bfQ)$ be the left ideal of $\afUglv$ generated by $\ttx_{j,s}^+ ,\ms Q_{i,s}-Q_{i,s},$  $\ttk_i-v^{\la_i}$, and $(\ttx_{j,s}^-)^{\mu_j+1}$ for $1\leq j\leq n-1$,
$1\leq i\leq n$, and $s\in\mbz$, where $\la_i=\mathrm{deg}Q_i(u)$, $\mu_j=\la_j-\la_{j+1}$, $\ms Q_{i,s}$ is defined in \eqref{ms Qj} and $Q_{i,s}$ is defined using \eqref{Qis}. For $\bfQ\in\Qnp$ let
$$W(\bfQ)=\afUglv/J(\bfQ).$$
The modules $W(\bfQ)$ are called Weyl modules for $\afUglv$.

A $\afUglv$-module $V$ in $\hFn$ is called an $\ell$-highest weight module with $\ell$-highest weight $\bfQ\in\Qnp$ if there exists a nonzero vector $w_0\in V$ such that $V=\afUglv w_0$ and
$$\ttx_{j,s}^+w_0=0,\quad\ms Q_i^\pm(u)w_0=Q_i^\pm(u)w_0,\quad\ttk_iw_0=v^{\la_i}w_0$$
for $1\leq j\leq n-1$, $1\leq i\leq n$ and $s\in\mbz$, where $\la_i=\deg(Q_i(u))$, $\ms Q_i^\pm(u)$ is defined in \eqref{ms Qj} and $Q_i^\pm(u)$ is defined by \eqref{f^pm(u)}. The element $w_0$ is called the $\ell$-highest weight vector of $V$.

\begin{Lem}\label{l-highest weight module}
Any $\ell$-highest weight $\afUglv$-module in $\hFn$ is a quotient of $W(\bfQ)$
for some $\bfQ\in\Qnp$.
\end{Lem}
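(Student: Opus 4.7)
The plan is to show that the natural surjection $\varphi:\afUglv\twoheadrightarrow V$ given by $\varphi(x)=x\cdot w_0$ factors through $W(\bfQ)=\afUglv/J(\bfQ)$; equivalently, each element in the generating set of the left ideal $J(\bfQ)$ annihilates $w_0$.

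Three of the four generator families can be handled by direct inspection using the $\ell$-highest weight axioms. First, $\ttx_{j,s}^+ w_0=0$ and $\ttk_i w_0=v^{\la_i}w_0$ are built into the definition of an $\ell$-highest weight vector, so $(\ttk_i-v^{\la_i})w_0=0$. Second, extracting the coefficient of $u^{\pm s}$ in the generating function identity $\ms Q_i^\pm(u)w_0=Q_i^\pm(u)w_0$ (using \eqref{ms Qj} and \eqref{Qis}) gives $(\ms Q_{i,s}-Q_{i,s})w_0=0$ for every $i$ and $s$.

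The only nontrivial family is $(\ttx_{j,s}^-)^{\mu_j+1}$ for $1\le j\le n-1$ and $s\in\mbz$. To handle this, I would restrict $V$ to $\afUslv$ and consider the submodule $V':=\afUslv\cdot w_0\subseteq V$. Since $V\in\hFn$, it is finite dimensional and its restriction to $\Uglv$ is polynomial of type $1$, which forces $V'$ to be a finite dimensional integrable $\afUslv$-module of type $1$. From the passage relating $\tth_{i,t}$ to $\ttg_{i,t}$ in Definition \ref{QLA} together with the definitions \eqref{ms Pj} and \eqref{ms Qj}, one checks (in parallel with \eqref{res of L(Q)}) that $w_0$ is an $\ell$-highest weight vector for $\afUslv$ with $\ti\ttk_j w_0=v^{\mu_j}w_0$ and Drinfeld polynomial $\bfP=(P_1(u),\ldots,P_{n-1}(u))$, where $P_j(u)=Q_j(v^{j-1}u)/Q_{j+1}(v^{j+1}u)$ and $\deg P_j(u)=\la_j-\la_{j+1}=\mu_j$.

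At this point I would invoke the Chari--Pressley universal property of the Weyl modules for $\afUslv$ (\cite{CP01}; see also the statement recalled in Theorem \ref{Decomposition of Weyl modules for affine sln}): every finite dimensional $\ell$-highest weight $\afUslv$-module of type $1$ with Drinfeld polynomial $\bfP$ is a quotient of $\Wcp(\bfP)=\afUslv/\Jcp(\bfP)$. Applied to $V'$, this forces $(\ttx_{j,s}^-)^{\mu_j+1}w_0=0$ for all admissible $j$ and $s$, as these elements lie in $\Jcp(\bfP)$. Combining with the three easy cases yields $J(\bfQ)\subseteq\ker\varphi$, and $\varphi$ descends to the required surjection $W(\bfQ)\twoheadrightarrow V$. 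The main step to pin down is the invocation of this universal property for $\afUslv$-Weyl modules, which rests on the integrability of finite dimensional type $1$ modules and the resulting $\widehat{\frak{sl}}_2$-reduction for each index $j$; this is standard, and is where all the representation-theoretic content of the lemma is concentrated, the remainder being bookkeeping at the level of defining relations.
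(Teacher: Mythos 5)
Your proof is correct, and for the three easy families it coincides with the paper's. Where you diverge is in the treatment of $(\ttx_{j,s}^-)^{\mu_j+1}w_0=0$: you restrict to $\afUslv$, verify that $w_0$ is then an $\ell$-highest weight vector with Drinfeld polynomial $\bfP$ determined from $\bfQ$, and invoke the Chari--Pressley universal property of the $\afUslv$-Weyl module $\Wcp(\bfP)$; the paper instead cites \cite[Lem.~5.4]{Jantzen} directly, which is the raw $U_v(\mathfrak{sl}_2)$-integrability fact that $E\,w_0=0$ and $K w_0 = v^{\mu}w_0$ in a finite-dimensional module force $F^{\mu+1}w_0=0$ (here applied to each copy $\ttx_{j,-s}^+,\ttx_{j,s}^-,\ti\ttk_j^{\pm1}$, using (QLA5)). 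Both are valid. The paper's route is shorter and more elementary, since it uses only the $\widehat{\mathfrak{sl}}_2$-reduction in one stroke and does not need the intermediate translation from $\bfQ$ to $\bfP$; your route effectively reduces the $\afUglv$ statement to the analogous $\afUslv$ statement already present in \cite{CP01}, at the cost of carrying out the restriction bookkeeping, but it makes explicit that the $\afUglv$-Weyl module theory is tethered to the $\afUslv$ one — a point the paper exploits later in Proposition~\ref{Decomposition of Weyl modules for affine gln}. Note that the universal property you cite is itself established by the same $\mathfrak{sl}_2$-reduction, so the two proofs have the same representation-theoretic core; you correctly flag this in your last sentence.
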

\begin{proof}
Let $V$ be an $\ell$-highest weight $\afUglv$-module in $\hFn$ with $\ell$-highest weight $\bfQ\in\Qnp$ and let $w_0$ be the $\ell$-highest weight vector of $V$.
Then by definition we have $V=\afUglv w_0$ and
$$\ttx_{j,s}^+w_0=0,\quad\ms Q_i^\pm(u)w_0=Q_i^\pm(u)w_0,\quad\ttk_iw_0=v^{\la_i}w_0$$
for $1\leq j\leq n-1$, $1\leq i\leq n$ and $s\in\mbz$, where $\la_i=\deg(Q_i(u))$.
Since $\dim V<\infty$, by \cite[Lem. 5.4]{Jantzen} we conclude that $(x_{j,s}^-)^{\mu_j+1}w_0=0$ for $1\leq j\leq n-1$ and $s\in\mbz$, where $\mu_j=\la_j-\la_{j+1}$. It follows that $J(\bfQ)w_0=0$ and hence $V$ is a quotient of $W(\bfQ)$.
\end{proof}

\subsection{}
We are now ready to extend Theorem \ref{Decomposition of Weyl modules for affine sln} to the case of $\afUglv$. We need some preliminary lemmas.
\begin{Lem}\label{pseudo-highest afUglv-module}
Let $V$ be an $\ell$-highest $\afUglv$-weight module in $\hFn$ with $\ell$-highest weight $\bfQ$ and let $w_0$ be the $\ell$-highest weight vector of $V$. Then $V=\afUslv w_0$.
\end{Lem}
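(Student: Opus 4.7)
Since $V=\afUglv w_0$ by hypothesis, the claim amounts to showing that $\afUglv w_0\subseteq\afUslv w_0$. Recall from \S2 that $\afUglv$ is generated by the elements $E_i,F_i,\ttk_i^{\pm1}$ and the central elements $\sfz_s^{\pm}$ for $1\leq i\leq n$ and $s\geq 1$, while $\afUslv$ is already generated by $E_i,F_i,\ti\ttk_i^{\pm1}$ ($1\leq i\leq n$). Thus the extra generators of $\afUglv$ beyond $\afUslv$ are the $\ttk_i^{\pm1}$ (as opposed to their ratios $\ti\ttk_i=\ttk_i\ttk_{i+1}^{-1}$) and the central elements $\sfz_s^{\pm}$. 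The strategy is to show that each of these extra generators acts by a scalar on $w_0$ and normalises $\afUslv$, so that it stabilises $\afUslv w_0$.

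\textbf{Step 1 (scalars on $w_0$).} The defining property $\ttk_iw_0=v^{\la_i}w_0$ is given. For the central elements, the $\ell$-highest-weight condition $\ms Q_i^\pm(u)w_0=Q_i^\pm(u)w_0$ together with the generating-function definition \eqref{ms Qj} and invertibility of $\exp$ shows that each $\ttg_{i,\pm t}$ acts as a scalar on $w_0$; consequently the linear combination $\sfz_s^{\pm}=sv^{\pm s}[s]_v^{-1}(\ttg_{1,\pm s}+\cdots+\ttg_{n,\pm s})$ also acts as a scalar $c_s^{\pm}\in\mbc$ on $w_0$.

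\textbf{Step 2 (normalising $\afUslv$).} The central elements $\sfz_s^{\pm}$ commute with everything, so for any $y\in\afUslv$ we have $\sfz_s^{\pm}yw_0=y\sfz_s^{\pm}w_0=c_s^{\pm}\,yw_0\in\afUslv w_0$. For $\ttk_i$, conjugation preserves each Drinfeld generator of $\afUslv$ up to a scalar: relations (QLA1)--(QLA4) give $\ttk_i\ttx_{j,s}^{\pm}\ttk_i^{-1}=v^{\pm(\dt_{i,j}-\dt_{i,j+1})}\ttx_{j,s}^{\pm}$, $\ttk_i\tth_{j,t}\ttk_i^{-1}=\tth_{j,t}$, and $\ttk_i\ti\ttk_j^{\pm1}\ttk_i^{-1}=\ti\ttk_j^{\pm1}$. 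Hence $\ttk_i\afUslv\ttk_i^{-1}=\afUslv$, so for $y\in\afUslv$ there is $y'\in\afUslv$ with $\ttk_iy=y'\ttk_i$, giving $\ttk_iyw_0=y'(\ttk_iw_0)=v^{\la_i}y'w_0\in\afUslv w_0$.

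\textbf{Conclusion.} Let $U':=\afUslv w_0$. Step 2 shows that $\ttk_i^{\pm1}U'\subseteq U'$ and $\sfz_s^{\pm}U'\subseteq U'$, while $\afUslv U'\subseteq U'$ trivially. Since these elements together with $\afUslv$ generate all of $\afUglv$, an easy induction on the length of a monomial in the generators shows $\afUglv U'\subseteq U'$. In particular $V=\afUglv w_0\subseteq U'=\afUslv w_0\subseteq V$, giving the desired equality. No step here is really hard; the only mild care is needed in Step 1 to extract scalar actions of the $\ttg_{i,\pm t}$ from the exponential generating-function identity, and in Step 2 to check that every Drinfeld generator of $\afUslv$ is normalised by $\ttk_i$.
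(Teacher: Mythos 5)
Your proof is correct and follows essentially the same architecture as the paper's: reduce to showing $\ttk_i^{\pm1}$ and $\sfz_s^{\pm}$ act compatibly on $\afUslv w_0$, then use that $\afUglv$ is generated by $\afUslv$, $\ttk_i^{\pm1}$, $\sfz_s^{\pm}$. Two small points of difference are worth noting. First, the paper obtains the scalar action of $\sfz_s^\pm$ on $w_0$ indirectly: it observes $w_0\in V_\la$ with $\dim V_\la=1$ (standard for $\ell$-highest weight modules), and since $\sfz_s^\pm$ is central it preserves $V_\la$, so acts by a scalar. Your Step 1 instead reads the scalar action of every $\ttg_{i,\pm t}$ directly off the $\ell$-highest weight condition $\ms Q_i^\pm(u)w_0=Q_i^\pm(u)w_0$ by inverting the exponential in \eqref{ms Qj} — more self-contained, and it proves more than is needed. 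Second, the paper's sentence ``This together with the fact that $\afUglv$ is generated by $\afUslv$, $\ttk_i^{\pm 1}$ and $\sfz^\pm_s$ $\ldots$ implies $V=\afUslv w_0$'' silently uses that $\ttk_i^{\pm1}$ normalises $\afUslv$ and $\sfz_s^\pm$ is central, so that every monomial in the generators can be pushed into the form $y\cdot(\text{scalar})\cdot w_0$ with $y\in\afUslv$; your Step 2 spells this out via (QLA2), which is a genuine (if small) gap you have correctly filled.
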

\begin{proof}
Let $\la=\deg(\bfQ)=(\deg Q_1(u),\cdots,\deg Q_n(u))$. Then $w_0\in V_\la$ and $\dim V_\la=1$.
Since the elements $\sfz^\pm_s$ are central in $\afUglv$, we have
$\sfz^\pm_sw_0\in V_\la$. Thus, since $\dim V_\la=1$, there exists
$a_{\pm s}\in\mbc$ such that
$\sfz^\pm_sw_0=a_{\pm s}w_0$. This together with the fact that $\afUglv$ is
generated by $\afUslv$, $\ttk_i^{\pm 1}$ and $\sfz^\pm_s$ ($1\leq i\leq n$, $s\geq 1$), implies that $V=\afUglv w_0=\afUslv w_0$.
\end{proof}

For $a\in\mbc^*$ let $$\Det_a=L(\bfQ_{n,av^{n-1}}).$$
According to \cite[Lem. 4.6.5]{DDF} we have $\dim\Det_a=1$ for $a\in\mbc^*$.
\begin{Lem}\label{Prop of Det}
Let $a\in\mbc^*$ and $V$ be a $\afUglv$-module.
Assume  $\Det_a=\spann\{w_0\}$. Then $$x(w\ot w_0)=(xw)\ot w_0$$ for $x\in\afUslv$ and $w\ot w_0\in V\ot\Det_a$. In particular we have $(V\ot\Det_a)|_{\afUslv}\cong V|_{\afUslv}$.
\end{Lem}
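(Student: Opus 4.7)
The plan is to first identify the $\afUslv$-module structure on $\Det_a$, then verify the identity $x(w\otimes w_0)=(xw)\otimes w_0$ on a generating set of $\afUslv$, extend it by multiplicativity, and conclude the module isomorphism.

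First, I would compute $\bfQ:=\bfQ_{n,av^{n-1}}$ explicitly. The defining relations for $\bfQ_{i,a}$ at $i=n$ with parameter $av^{n-1}$ give $Q_n(u)=1-au$ and $Q_j(uv^{j-1})=Q_{j+1}(uv^{j+1})$ for $1\leq j\leq n-1$. A short downward induction on $j$ then yields $Q_j(u)=1-av^{2(n-j)}u$ for $1\leq j\leq n$, so $\deg\bfQ=(1,1,\ldots,1)$. Since $\dim\Det_a=1$, this is the unique $\mbz$-weight of $w_0$, and hence $\ti\ttk_iw_0=w_0$ for $1\leq i\leq n$ (using $\ti\ttk_n=\ttk_n\ttk_1^{-1}$).

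Second, I would apply \eqref{res of L(Q)} from Theorem \ref{classification of simple afUglC-modules} to identify $\Det_a|_{\afUslv}$. The associated Drinfeld polynomial is $\bfP=(P_1(u),\ldots,P_{n-1}(u))$ with
\[
P_i(u)=\frac{Q_i(v^{i-1}u)}{Q_{i+1}(v^{i+1}u)}=\frac{1-av^{2n-i-1}u}{1-av^{2n-i-1}u}=1
\]
for each $i$. Therefore $\Det_a|_{\afUslv}\cong\Lcp(\mathbf{1})$, the trivial one-dimensional $\afUslv$-module, on which $E_i$ and $F_i$ act as $0$ and $\ti\ttk_i$ as $1$ for $1\leq i\leq n$.

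Third, using the coproduct formulas \eqref{Hopf}, I would verify the identity on each generator of $\afUslv$. Recall that $\afUslv$ is generated by $E_i$, $F_i$, $\ti\ttk_i^{\pm 1}$ for $1\leq i\leq n$ (see the paragraph after Definition \ref{QLA}). For example,
\[
E_i(w\otimes w_0)=E_iw\otimes\ti\ttk_iw_0+w\otimes E_iw_0=(E_iw)\otimes w_0,
\]
and analogous computations handle $F_i$ and $\ti\ttk_i^{\pm 1}$. The set of $x\in\afUglv$ satisfying $x(w\otimes w_0)=(xw)\otimes w_0$ for all $w\in V$ is a $\mbc$-subalgebra, so the identity extends to all of $\afUslv$. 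Finally, the $\mbc$-linear bijection $V\to V\otimes\Det_a$ sending $w\mapsto w\otimes w_0$ is $\afUslv$-linear by what has just been shown, giving $(V\otimes\Det_a)|_{\afUslv}\cong V|_{\afUslv}$. The only non-routine point is Step 2: identifying $\Det_a|_{\afUslv}$ as the trivial module via \eqref{res of L(Q)} neatly sidesteps a direct weight-shift analysis of Beck's generators $E_n$, $F_n$, which would otherwise be the main technical obstacle.
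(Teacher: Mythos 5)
Your proof is correct and follows essentially the same route as the paper: both verify the identity on the generators $E_i,F_i,\ti\ttk_i^{\pm1}$ of $\afUslv$ via the coproduct formulas \eqref{Hopf} and then extend multiplicatively. The only difference is that where the paper simply asserts ``It is clear that $E_iw_0=F_iw_0=0$ and $\ti\ttk_iw_0=w_0$,'' you supply a clean justification by computing $\bfQ_{n,av^{n-1}}$ explicitly and invoking \eqref{res of L(Q)} to identify $\Det_a|_{\afUslv}$ with the trivial module.
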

\begin{proof}
It is clear that $E_iw_0=F_iw_0=0$ and $\ti\ttk_iw_0=w_0$ for all $i$. This together with \eqref{Hopf} implies that
$E_i(w\ot w_0)=(E_iw)\ot w_0$,  $F_i(w\ot w_0)=(F_iw)\ot w_0$ and
$\ti\ttk_i(w\ot w_0)=(\ti\ttk_i w)\ot w_0$ for $1\leq i\leq n$. Therefore, since $\afUslv$ is generated by the elements $E_i$, $F_i$ and $\ti\ttk_i^{\pm 1}$ for $1\leq i\leq n$, we conclude that $x(w\ot w_0)=(xw)\ot w_0$ for $x\in\afUslv$ and $w\in V$. The proof is completed.
\end{proof}
\begin{Prop}\label{Decomposition of Weyl modules for affine gln}
(1) For  $\bfQ\in\Qnp$, we have $W(\bfQ)|_{\afUslv}\cong\Wcp(\bfP)$
where $\bfP\in\Pnp$ with $P_i(u)=Q_i(\ttv^{i-1}u)/Q_{i+1}(\ttv^{i+1}u)$ for $1\leq i\leq n-1$.

(2) For each $\bfQ\in\Qnp$, there exist $1\leq i_1, \cdots,i_k\leq n-1$, $a_1, \cdots,a_k\in\mbc^*$ and $b_1,\cdots,b_l\in\mbc^*$ such that  $W(\bfQ)\cong L(\bfQ_{i_1,a_1})\ot\cdots
\ot L(\bfQ_{i_k,a_k})\ot \Det_{b_1}\ot\cdots\ot\Det_{b_l}$.

(3) For $\bfQ\in\Qnp$ we have $W(\bfQ)\in\hFn$.
\end{Prop}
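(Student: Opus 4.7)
The plan is to prove (1)--(3) simultaneously by producing an explicit module $V$ of the predicted shape and comparing dimensions against $\Wcp(\bfP)$, where $\bfP=\kav(\bfQ)$. Since $\Qnp$ is generated as a monoid by the $\bfQ_{i,a}$ ($1\le i\le n$, $a\in\mbc^*$), we may write $\bfQ=\prod_s\bfQ_{i_s,a_s}$; separating the indices with $i_s<n$ from those with $i_s=n$ and using $L(\bfQ_{n,av^{n-1}})=\Det_a$, set
\[
V := L(\bfQ_{i_1,a_1})\ot\cdots\ot L(\bfQ_{i_k,a_k})\ot\Det_{b_1}\ot\cdots\ot\Det_{b_l},
\]
with all $i_j\le n-1$. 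The tensor product $w$ of the $\ell$-highest weight vectors of the factors is an $\ell$-highest weight vector of $V$ of $\ell$-weight $\bfQ$.

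The first key point is to identify $V|_{\afUslv}$. Iterating Lemma \ref{Prop of Det} removes the $\Det_{b_l}$ tensor factors, Theorem \ref{classification of simple afUglC-modules} together with the direct identity $\kav(\bfQ_{i,a})=\og_{i,a}$ (for $i\le n-1$) gives $L(\bfQ_{i_j,a_j})|_{\afUslv}\cong\Lcp(\og_{i_j,a_j})$, and Theorem \ref{Decomposition of Weyl modules for affine sln} then identifies the resulting $\afUslv$-tensor with $\Wcp(\bfP)$. In particular $V|_{\afUslv}$ is $\afUslv$-cyclic on $w$. Since $V$ is finite dimensional, the Jantzen-lemma argument used in the proof of Lemma \ref{l-highest weight module} applies to $w$, so the defining relations of $W(\bfQ)$ hold on $w$ and we obtain a $\afUglv$-surjection
\[
\phi: W(\bfQ) \twoheadrightarrow \afUglv\cdot w = \afUslv\cdot w = V.
\]

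In the opposite direction I would check that the $\ell$-highest weight vector $w_0\in W(\bfQ)$ satisfies the defining $\afUslv$-relations of $\Wcp(\bfP)$. The only non-trivial one, $\ms P_{j,s}w_0=P_{j,s}w_0$, follows from the formal power-series identity
\[
\ms P_j^\pm(u) \;=\; \ms Q_j^\pm(\ttv^{j-1}u)\cdot\ms Q_{j+1}^\pm(\ttv^{j+1}u)^{-1},
\]
which is read off from \eqref{ms Pj}, \eqref{ms Qj}, the formula $\tth_{j,\pm t}=\ttv^{\pm(j-1)t}\ttg_{j,\pm t}-\ttv^{\pm(j+1)t}\ttg_{j+1,\pm t}$, and (QLA4). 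To upgrade this into a surjection $\psi:\Wcp(\bfP)\twoheadrightarrow W(\bfQ)|_{\afUslv}$ one still needs $W(\bfQ)=\afUslv w_0$. Because the $\ttg_{i,t}$ pairwise commute, the equations $\ms Q_i^\pm(u)w_0=Q_i^\pm(u)w_0$ inductively force each $\ttg_{i,\pm t}$ to act on $w_0$ as a scalar, so $\sfz_s^\pm$ acts on $w_0$ by a scalar and, by centrality, on all of $W(\bfQ)$; combined with $\ttk_iw_0=\ttv^{\la_i}w_0$ and $\ttk_i\afUslv\ttk_i^{-1}\han\afUslv$, this yields $\afUglv w_0=\afUslv w_0$, so $\psi$ is surjective.

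Combining $\phi$ and $\psi$ with $\dim V=\dim\Wcp(\bfP)$ forces $\dim W(\bfQ)=\dim V=\dim\Wcp(\bfP)$, so both maps are isomorphisms; this proves (1) and (2). Part (3) then follows because $W(\bfQ)\cong V$ is a tensor product of objects of $\hFn$, and $\hFn$ is stable under tensor products via the triangular form of the coproduct on the $\ms Q_{i,s}$ (cf.\ \cite{FM,Hub2}). The main technical hurdle is the equality $W(\bfQ)=\afUslv w_0$: extracting the individual $\ttg_{i,t}$-eigenvalues from the relations $\ms Q_i^\pm(u)w_0=Q_i^\pm(u)w_0$ and invoking the centrality of $\sfz_s^\pm$ is the only step beyond routine bookkeeping, while the generating-function identity for $\ms P_j^\pm$ is a direct exponential computation.
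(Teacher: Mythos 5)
Your proposal is correct and follows essentially the same route as the paper: build $V=L(\bfQ_{i_1,a_1})\ot\cdots\ot L(\bfQ_{i_k,a_k})\ot\Det_{b_1}\ot\cdots\ot\Det_{b_l}$, identify $V|_{\afUslv}$ with $\Wcp(\bfP)$ via Lemma \ref{Prop of Det} and Theorem \ref{Decomposition of Weyl modules for affine sln}, produce the two surjections $\Wcp(\bfP)\twoheadrightarrow W(\bfQ)|_{\afUslv}$ and $W(\bfQ)\twoheadrightarrow V$, and close with the dimension count. You in fact make explicit two points the paper leaves implicit (the identity $\ms P_j^\pm(u)=\ms Q_j^\pm(\ttv^{j-1}u)\ms Q_{j+1}^\pm(\ttv^{j+1}u)^{-1}$ behind the map $\vi$, and the derivation of $W(\bfQ)=\afUslv w_0$ from the centrality of $\sfz_s^\pm$, which the paper delegates to Lemma \ref{pseudo-highest afUglv-module}), so no changes are needed.
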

\begin{proof}
There exists a natural $\afUslv$-module homomorphism
\begin{equation}\label{vi}
\vi:\Wcp(\bfP)\ra W(\bfQ)
\end{equation}
defined by sending $x+\Jcp(\bfP)$ to $x+J(\bfQ)$ for $x\in\afUslv$.
From Lemma \ref{pseudo-highest afUglv-module} we see that $W(\bfQ)=\afUslv \bar 1$, where $\bar 1=1+J(\bfQ)\in W(\bfQ)$. This implies that $\vi$ is surjective. By \cite{CP01} we know that $\Wcp(\bfP)$ is finite dimensional. Thus $\dim W(\bfQ)<\infty$ and
\begin{equation}\label{geq}
 \dim \Wcp(\bfP)\geq\dim W(\bfQ).
\end{equation}
To prove (1) it will therefore be enough to prove that
$\dim W(\bfQ)\geq\dim \Wcp(\bfP)$.

According to Theorem \ref{Decomposition of Weyl modules for affine sln} and \eqref{res of L(Q)}, there exist $1\leq i_1,i_2,\cdots,i_k\leq n-1$ and $a_1,a_2,\cdots,a_k\in\mbc^*$ such that
\begin{equation}\label{iso}
\Wcp(\bfP)\cong\Lcp(\og_{i_1,a_1})\ot\cdots
\ot \Lcp(\og_{i_k,a_k})\cong (L(\bfQ_{i_1,a_1})\ot\cdots\ot L(\bfQ_{i_k,a_k}))|_{\afUslv}.
\end{equation}
Let $\la=\deg\bfQ=(\deg Q_1(u),\cdots,\deg Q_n(u))$.
Write $Q_n(u)=(1-b_1u)\cdots(1-b_lu)$ where $l=\la_n=\deg(Q_n(u))$. Since $\bfP=\og_{i_1,a_1}\cdots\og_{i_k,a_k}$ we have $\bfQ=\bfQ_{i_1,a_1}\cdots\bfQ_{i_k,a_k}\bfQ_{n,b_1v^{n-1}}\cdots\bfQ_{n,b_lv^{n-1}}$.
Let $$V=L(\bfQ_{i_1,a_1})\ot\cdots
\ot L(\bfQ_{i_k,a_k})\ot \Det_{b_1}\ot\cdots\ot\Det_{b_l}.$$
Then $\dim V_\la=1$. Let $w_{s}$ (resp., $m_t$) be an $\ell$-highest weight vector of $L(\bfQ_{i_s,a_s})$ (resp., $\Det_{b_t}$). Let $w_0=w_1\ot\cdots\ot w_k$ and $m_0=m_1\ot\cdots\ot m_l$.
 From Lemma \ref{Prop of Det} and \eqref{iso} we see that
\begin{equation}\label{res}
V|_{\afUslv}\cong \Wcp(\bfP)
\end{equation}
It follows that $\Jcp(\bfP)(w_0\ot m_0)=0$. Furthermore by \cite[Lem. 4.1]{FM} we have
$\ms Q_i^\pm(u)(w_0\ot m_0)=Q_i^\pm(u)w_0$ and $\ttk_i(w_0\ot m_0)=v^{\la_i}w_0\ot m_0$
for $1\leq i\leq n$. Thus there exists a $\afUglv$-module homomorphism
$$\psi:W(\bfQ)\ra V $$
defined by sending $x\bar 1$ to $x(w_0\ot m_0)$ for $x\in\afUglv$, where $\bar 1=1+J(\bfQ)\in W(\bfQ)$.
According to \eqref{iso} we have
$L(\bfQ_{i_1,a_1})\ot\cdots\ot L(\bfQ_{i_k,a_k})=\afUslv w_0$. This together with Lemma
\ref{Prop of Det} implies that
$$V\supseteq\afUglv(w_0\ot m_0)\supseteq\afUslv(w_0\ot m_0)=\{(xw_0)\ot m_0\mid
x\in\afUslv\}=V.$$
It follows that $V=\afUglv(w_0\ot m_0)$ and hence $\psi$ is surjective. Therefore by  \eqref{geq}, \eqref{iso} and \eqref{res} we have
$$\dim W(\bfQ)\geq \dim V=\dim\Wcp(\bfP)\geq\dim W(\bfQ).$$
Thus, the maps $\vi$, $\psi$ are all isomorphisms. The assertion (1) and (2) follows.
The assertion (3) follows from (2) and \cite[Lem. 4.3]{FM}.
The proof is completed.
\end{proof}

\subsection{}
We will prove in Corollary \ref{Weyl module for affine q-Schur algebras} that the $\afUglv$-module $W(\bfQ)$ is also an $\afSrv$-module for $\bfQ\in\Qnpr$. The $\afSrv$-modules $W(\bfQ)$ with $\bfQ\in\Qnpr$ will be called Weyl modules for $\afSrv$.

According to Theorem \ref{classification of simple afSrv-modules} and \cite[Lem. 4.6.6]{DDF} we have the following result.
\begin{Lem}\label{tensor of simple module}
Let $\bfQ_i\in\Qnpri$ ($1\leq i\leq t$). Then
$L(\bfQ_1)\ot\cdots\ot L(\bfQ_t)$ can be regarded as an $\afSrv$-module via the map $\zeta_r$ defined in \eqref{zetar}, where $r=r_1+\cdots+r_t$.
\end{Lem}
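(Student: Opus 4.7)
The plan is to reduce the statement to the cited result \cite[Lem. 4.6.6]{DDF}, which presumably asserts that any $\afUglv$-module in $\hFn$ whose $\Uglv$-weights all lie in $\Lanr$ factors through $\zeta_r:\afUglv\to\afSrv$. Granting this, the task becomes a bookkeeping check on the weights of a tensor product.

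First I would invoke Theorem \ref{classification of simple afSrv-modules} to conclude that each $L(\bfQ_i)$, being an $\afSrv$-module at level $r_i$ via $\zeta_{r_i}$, has all its $\Uglv$-weights lying in $\La(n,r_i)$; equivalently, for every $\la\in\wt(L(\bfQ_i))$ one has $\la_1+\cdots+\la_n=r_i$. This is essentially the statement that $\afSrv$ acts nontrivially only on the $r_i$-homogeneous piece of the tensor space $\Ogv^{\otimes r_i}$.

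Next, using the comultiplication \eqref{Hopf}, the weight of a pure tensor in $L(\bfQ_1)\otimes\cdots\otimes L(\bfQ_t)$ is the sum of the constituent weights. Therefore every weight $\mu$ of $L(\bfQ_1)\otimes\cdots\otimes L(\bfQ_t)$ decomposes as $\mu=\la^{(1)}+\cdots+\la^{(t)}$ with $\la^{(i)}\in\wt(L(\bfQ_i))$, and in particular $\mu_1+\cdots+\mu_n=r_1+\cdots+r_t=r$, so $\wt(L(\bfQ_1)\otimes\cdots\otimes L(\bfQ_t))\subseteq\Lanr$. Moreover, by Proposition \ref{Decomposition of Weyl modules for affine gln}(3) (or directly from the tensor product of objects in $\hFn$ being in $\hFn$, which follows from \cite[Lem. 4.3]{FM} together with Theorem \ref{classification of simple afUglC-modules}), the tensor product lies in $\hFn$.

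Combining these two observations, the tensor product is an object of $\hFn$ with all weights in $\Lanr$, so \cite[Lem. 4.6.6]{DDF} applies and yields an $\afSrv$-module structure through $\zeta_r$. The only point requiring any thought is the weight computation in the second paragraph, and verifying that the cited lemma has exactly the hypotheses we have verified; there are no further obstacles.
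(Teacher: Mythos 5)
Your proposal matches the paper's approach: the paper's own proof is nothing more than the one-line citation of Theorem \ref{classification of simple afSrv-modules} and \cite[Lem.\ 4.6.6]{DDF}, and the weight bookkeeping you supply is correct under your (plausible) reading of that DDF lemma. Two small cautions: you should confirm the precise statement of \cite[Lem.\ 4.6.6]{DDF}, since it may be the more specific assertion that a tensor product of an $\sS_\vtg(n,r_1)_v$-module with an $\sS_\vtg(n,r_2)_v$-module is an $\sS_\vtg(n,r_1+r_2)_v$-module (in which case your weight argument re-proves part of that lemma rather than merely invoking it — the filtration issue that an ideal annihilating each composition factor need not annihilate the module means the weight criterion is not automatically sufficient); and Proposition \ref{Decomposition of Weyl modules for affine gln}(3) asserts only $W(\bfQ)\in\hFn$, not closure of $\hFn$ under tensor product, so your parenthetical alternative \cite[Lem.\ 4.3]{FM} is the correct reference for that step.
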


Combining Theorem \ref{classification of simple afSrv-modules} with \cite[(4.6.0.1) and Cor. 4.6.2]{DDF} gives the following result.
\begin{Lem}\label{Decomposition of L(bfQ)}
For $\bfQ\in\Qnpr$ there exist $a_1,\cdots,a_r\in\mbc^*$ such that
$L(\bfQ)$ is a quotient of $L(\bfQ_{1,a_1})\ot\cdots\ot L(\bfQ_{1,a_r})$.
\end{Lem}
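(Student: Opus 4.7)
My plan is to unpack the hint: combine the monoid factorization from \cite[(4.6.0.1)]{DDF} with the quotient statement in \cite[Cor.~4.6.2]{DDF}, using Theorem \ref{classification of simple afSrv-modules} and Lemma \ref{tensor of simple module} along the way.

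The first step is to factor $\bfQ$ in the monoid $\Qnp$ as $\bfQ=\bfQ_{1,a_1}\cdots\bfQ_{1,a_r}$ for suitable $a_1,\ldots,a_r\in\mbc^*$. Since $\deg\bfQ_{1,a}=\bse_1$ and $\bfQ\in\Qnpr$ has total degree $r$, any expression of $\bfQ$ as a product of fundamental tuples of the form $\bfQ_{1,a}$ must involve exactly $r$ factors; the parameters should be read off (after the appropriate $v$-shifts) from the roots of $\prod_i Q_i(u)$. The point is that, in contrast to the more general factorization $\bfQ=\bfQ_{i_1,a_1}\cdots\bfQ_{i_r,a_r}$ with arbitrary $i_j\in\{1,\ldots,n\}$ used in the proof of Corollary \ref{l-weight for L(bfQ)}, here we need all $i_j=1$. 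The reduction to first-row fundamental pieces is precisely what \cite[(4.6.0.1)]{DDF} supplies.

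The second step is the tensor-product quotient construction. By Lemma \ref{tensor of simple module}, $L(\bfQ_{1,a_1})\ot\cdots\ot L(\bfQ_{1,a_r})$ inherits an $\afSrv$-module structure via $\zeta_r$. A direct calculation with the coproduct \eqref{Hopf} shows that the pure tensor $w_1\ot\cdots\ot w_r$ of the individual $\ell$-highest weight vectors is itself an $\ell$-highest weight vector of $\ell$-weight $\prod_j\bfQ_{1,a_j}=\bfQ$. Consequently, the $\afUglv$-submodule it generates is an $\ell$-highest weight module of type $\bfQ$, whose unique simple quotient is $L(\bfQ)$ by Theorem \ref{classification of simple afSrv-modules}.

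The main obstacle is that the above reasoning only exhibits $L(\bfQ)$ as a simple \emph{subquotient} of $L(\bfQ_{1,a_1})\ot\cdots\ot L(\bfQ_{1,a_r})$, whereas the lemma asserts that it is an actual \emph{quotient}. Upgrading ``subquotient'' to ``quotient'' amounts to showing that the tensor product is itself cyclic on $w_1\ot\cdots\ot w_r$, which in turn requires a judicious ordering of the spectral parameters $a_j$. This is exactly the content of \cite[Cor.~4.6.2]{DDF}, and once it is invoked the desired surjection $L(\bfQ_{1,a_1})\ot\cdots\ot L(\bfQ_{1,a_r})\twoheadrightarrow L(\bfQ)$ follows immediately.
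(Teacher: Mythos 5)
There is a genuine gap, and it occurs already at your first step. The factorization $\bfQ=\bfQ_{1,a_1}\cdots\bfQ_{1,a_r}$ does not exist for general $\bfQ\in\Qnpr$: since $\bfQ_{1,a}=\La_{1,a}$ has $\deg\bfQ_{1,a}=\bse_1$, any product of $r$ such elements equals $\bigl(\prod_j(1-a_ju),1,\cdots,1\bigr)$ and has degree $(r,0,\cdots,0)$, whereas $\deg\bfQ$ is an arbitrary dominant weight of size $r$. For instance, with $n=r=2$ the dominant tuple $\bfQ_{2,a}=(1-avu,\,1-av^{-1}u)$ is not a product of elements $\bfQ_{1,b}$. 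The error propagates into your second step: the tensor product of the $\ell$-highest weight vectors has $\ell$-weight $\prod_j\bfQ_{1,a_j}$, which is \emph{not} $\bfQ$, so even when the tensor product is cyclic on that vector its unique simple quotient is $L\bigl(\prod_j\bfQ_{1,a_j}\bigr)$ rather than $L(\bfQ)$. In the example above, $L(\bfQ_{2,a})$ is the one-dimensional module $\Det_{av^{-1}}$ and arises as the quotient of $L(\bfQ_{1,b_1})\ot L(\bfQ_{1,b_2})$ by the three-dimensional submodule \emph{generated by} the highest-weight tensor; the mechanism is precisely not the one you describe. (If you instead use the genuine monoid factorization $\bfQ=\bfQ_{i_1,a_1}\cdots\bfQ_{i_k,a_k}$ with arbitrary $i_j$, your highest-weight-vector argument does work, but it proves a different statement: the lemma specifically needs all factors of the form $L(\bfQ_{1,a})$ and exactly $r$ of them, which is what makes it usable in Proposition \ref{Q=Q'}.)

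The argument the paper's one-line citation to \cite[(4.6.0.1) and Cor.~4.6.2]{DDF} encodes goes through the affine Hecke algebra instead. One has $L(\bfQ_{1,a_1})\ot\cdots\ot L(\bfQ_{1,a_r})\cong\sfF(M_{\bfb})=\Ogv^{\ot r}\ot_{\afHrv}M_{\bfb}$ with $\bfb=(a_1,\cdots,a_r)$ (this is the special case $\mu=(1,\cdots,1)$ of Lemma \ref{prop of Schur functor}); every simple $\afSrv$-module $L(\bfQ)$ is of the form $\sfF(V)$ for a simple $\afHrv$-module $V$, and $V$ is a quotient of $M_{\bfb}$ for a suitable $\bfb$; right exactness of $\sfF$ then yields the surjection $\sfF(M_{\bfb})\twoheadrightarrow\sfF(V)=L(\bfQ)$. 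Your appeal to cyclicity (Lemma \ref{cyclic}) cannot substitute for this, since it only ever exhibits $L\bigl(\prod_j\bfQ_{1,a_j}\bigr)$ as a quotient of the tensor product.
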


\begin{Coro}\label{Weyl module for affine q-Schur algebras}
Let $\bfQ\in\Qnpr$. Then the $\afUglv$-module $W(\bfQ)$ can be regarded as an $\afSrv$-module via the map $\zeta_r$ defined in \eqref{zetar}. Furthermore,
there exist $a_1,\cdots,a_r\in\mbc^*$ such that $W(\bfQ)$ is a  quotient of $L(\bfQ_{1,a_1})\ot\cdots\ot L(\bfQ_{1,a_r})$. \end{Coro}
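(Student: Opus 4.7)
The plan is to combine Proposition \ref{Decomposition of Weyl modules for affine gln}(2) with Lemma \ref{tensor of simple module} and Lemma \ref{Decomposition of L(bfQ)}. First, I would apply Proposition \ref{Decomposition of Weyl modules for affine gln}(2) to obtain a $\afUglv$-module isomorphism
\begin{equation*}
W(\bfQ)\cong L(\bfQ_{i_1,a_1})\ot\cdots\ot L(\bfQ_{i_k,a_k})\ot \Det_{b_1}\ot\cdots\ot\Det_{b_l}
\end{equation*}
for suitable $1\le i_1,\dots,i_k\le n-1$ and $a_s,b_t\in\mbc^*$. Since $\bfQ_{i,a}\in\mathsf Q^+(n,i)$ for $1\le i\le n-1$ (a direct inspection of the defining relations $Q_n=1$, $Q_j(uv^{j-1})/Q_{j+1}(uv^{j+1})=(1-au)^{\delta_{i,j}}$ gives $\deg Q_j=1$ for $j\le i$ and $Q_j=1$ for $j>i$) and $\Det_b=L(\bfQ_{n,bv^{n-1}})$ with $\bfQ_{n,bv^{n-1}}\in\mathsf Q^+(n,n)$, the total degrees satisfy $i_1+\cdots+i_k+ln=r$. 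Lemma \ref{tensor of simple module} then guarantees that the tensor product on the right-hand side is naturally an $\afSrv$-module via $\zeta_r$, and therefore so is $W(\bfQ)$.

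For the second assertion, I would apply Lemma \ref{Decomposition of L(bfQ)} to each tensor factor separately. For each $1\le s\le k$, since $\bfQ_{i_s,a_s}\in\mathsf Q^+(n,i_s)$, there exist $c_{s,1},\dots,c_{s,i_s}\in\mbc^*$ such that $L(\bfQ_{i_s,a_s})$ is a quotient of $L(\bfQ_{1,c_{s,1}})\ot\cdots\ot L(\bfQ_{1,c_{s,i_s}})$. Likewise for each $1\le t\le l$, since $\bfQ_{n,b_tv^{n-1}}\in\mathsf Q^+(n,n)$, there exist $d_{t,1},\dots,d_{t,n}\in\mbc^*$ such that $\Det_{b_t}$ is a quotient of $L(\bfQ_{1,d_{t,1}})\ot\cdots\ot L(\bfQ_{1,d_{t,n}})$. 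Tensoring these surjections, which remains a surjection of $\afUglv$-modules by right exactness of the tensor product, yields $W(\bfQ)$ as a quotient of an $r$-fold tensor product $L(\bfQ_{1,a_1})\ot\cdots\ot L(\bfQ_{1,a_r})$ with the $a_i$'s being a concatenation of the $c_{s,j}$'s and $d_{t,j}$'s.

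The only subtle point, but not really an obstacle, is to verify that the resulting tensor product is viewed as an $\afSrv$-module compatibly with the $\afSrv$-module structures on $W(\bfQ)$ and on each intermediate tensor product; this is automatic since all the structures are defined through $\zeta_r$ and the coproduct $\Delta$, and Lemma \ref{tensor of simple module} already packages exactly this compatibility. The degree bookkeeping $i_1+\cdots+i_k+ln=r$, forced by $\bfQ\in\Qnpr$, ensures the correct $\afSrv$ appears throughout.
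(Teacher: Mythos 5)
Your proof is correct and reaches the conclusion, but the step establishing that the integer appearing in the tensor factorization equals $r$ is handled differently than in the paper. You compute the degree of each tensor factor directly, noting $\bfQ_{i,a}\in\mathsf Q^+(n,i)$ for $1\leq i\leq n$ and $\bfQ_{n,bv^{n-1}}\in\mathsf Q^+(n,n)$, and then assert $i_1+\cdots+i_k+ln=r$. This is valid, but the assertion is not a formal consequence of the \emph{statement} of Proposition \ref{Decomposition of Weyl modules for affine gln}(2) alone; it uses the factorization $\bfQ=\bfQ_{i_1,a_1}\cdots\bfQ_{i_k,a_k}\bfQ_{n,b_1v^{n-1}}\cdots\bfQ_{n,b_lv^{n-1}}$, which appears only inside the \emph{proof} of that proposition (or must be re-derived by matching $\ell$-highest weights of both sides of the isomorphism). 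You should make this explicit. The paper sidesteps this bookkeeping entirely: it first produces, from Proposition \ref{Decomposition of Weyl modules for affine gln}(2) and Lemma \ref{Decomposition of L(bfQ)}, a surjection from an $r'$-fold tensor product of $L(\bfQ_{1,a_s})$'s for some a priori unknown $r'$, observes via Lemma \ref{tensor of simple module} that this tensor product is an $\afSrprimev$-module, and then pins down $r'=r$ by letting the central element $\ttk_1\cdots\ttk_n$ act on the $\ell$-highest weight vector of $W(\bfQ)$ and using $\zeta_{r'}(\ttk_1\cdots\ttk_n)=v^{r'}$. That central-character trick is a little slicker because it never needs the explicit product decomposition of $\bfQ$; your argument is more direct about where $r$ comes from, at the cost of having to open up Proposition \ref{Decomposition of Weyl modules for affine gln}.
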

\begin{proof}
According to Proposition \ref{Decomposition of Weyl modules for affine gln}(2) and Lemma \ref{Decomposition of L(bfQ)}, we conclude that there exist $a_1,\cdots,a_{r'}$ such that $W(\bfQ)$ is a quotient of $L(\bfQ_{1,a_1})\ot\cdots\ot L(\bfQ_{1,a_{r'}})$. We have to show that $r=r'$. From Lemma \ref{tensor of simple module} we see that $L(\bfQ_{1,a_1})\ot\cdots\ot L(\bfQ_{1,a_{r'}})$ is an $\afSrprimev$-module. It implies that $W(\bfQ)$ is also an $\afSrprimev$-module.
This, together with the fact that  $\zeta_{r'}(\ttk_1\cdots \ttk_n)=v^{r'}$ shows that
$$v^{r'}w_0=\zeta_{r'}(\ttk_1\cdots \ttk_n)w_0=\ttk_1\cdots \ttk_nw_0=v^{\la_1+\cdots+\la_n}w_0$$
where $w_0$ is the $\ell$-highest weight vector of $W(\bfQ)$ and $\la_i=\deg Q_i(u)$. Therefore $r'=\la_1+\cdots+\la_n=r$.
\end{proof}
\subsection{}
We now use Proposition \ref{Decomposition of Weyl modules for affine gln} to prove the following generalization of Corollary \ref{l-weight for L(bfQ)}.

\begin{Prop}\label{l-weight for l highest weight module}
Let $V\in\hFn$ be an $\ell$-highest weight $\afUglv$-module with $\ell$-highest weight $\bfQ\in\Qnp$. Then $\wtl(V)\han\bfQ\Rnm$.
\end{Prop}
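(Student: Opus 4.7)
The plan is to reduce the statement to the special case of Weyl modules, and then exploit the tensor-product decomposition of $W(\bfQ)$ already established in Proposition~\ref{Decomposition of Weyl modules for affine gln}. By Lemma~\ref{l-highest weight module}, any $\ell$-highest weight $\afUglv$-module in $\hFn$ with $\ell$-highest weight $\bfQ\in\Qnp$ is a quotient of $W(\bfQ)$, and quotients can only decrease the set of generalized eigenspaces that appear. Hence it suffices to prove $\wtl(W(\bfQ))\subseteq \bfQ\Rnm$.

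The key input is the tensor-product formula for $\ell$-weights from \cite[Lem.~4.1]{FM}, which gives
\[
\wtl(V_1\ot V_2\ot\cdots\ot V_m)\subseteq \wtl(V_1)\wtl(V_2)\cdots\wtl(V_m).
\]
By Proposition~\ref{Decomposition of Weyl modules for affine gln}(2), we may write
\[
W(\bfQ)\cong L(\bfQ_{i_1,a_1})\ot\cdots\ot L(\bfQ_{i_k,a_k})\ot \Det_{b_1}\ot\cdots\ot\Det_{b_l},
\]
and the proof of that proposition also shows that $\bfQ=\bfQ_{i_1,a_1}\cdots\bfQ_{i_k,a_k}\bfQ_{n,b_1v^{n-1}}\cdots\bfQ_{n,b_lv^{n-1}}$. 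For each factor of the form $L(\bfQ_{i_s,a_s})$, Lemma~\ref{l-weight for L(Qia)} gives $\wtl(L(\bfQ_{i_s,a_s}))\subseteq \bfQ_{i_s,a_s}\Rnm$. For each determinant factor $\Det_{b_t}=L(\bfQ_{n,b_tv^{n-1}})$, we have $\dim\Det_{b_t}=1$ (by \cite[Lem.~4.6.5]{DDF}), so $\wtl(\Det_{b_t})=\{\bfQ_{n,b_tv^{n-1}}\}$. Multiplying these sets together and using that $\Rnm$ is closed under multiplication, we obtain
\[
\wtl(W(\bfQ))\subseteq \bfQ_{i_1,a_1}\cdots\bfQ_{i_k,a_k}\bfQ_{n,b_1v^{n-1}}\cdots\bfQ_{n,b_lv^{n-1}}\Rnm=\bfQ\Rnm.
\]
Passing to the quotient $V$ finishes the proof.

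The only subtlety is verifying that if $V$ is a quotient of $W(\bfQ)$ then $\wtl(V)\subseteq \wtl(W(\bfQ))$; this is immediate because the surjection $W(\bfQ)\twoheadrightarrow V$ is $\afUglv$-equivariant and hence sends each generalized eigenspace $W(\bfQ)_{(\la,\gamma)}$ onto $V_{(\la,\gamma)}$, so an $\ell$-weight of $V$ is necessarily an $\ell$-weight of $W(\bfQ)$. No substantial obstacle is expected beyond assembling these ingredients in the correct order.
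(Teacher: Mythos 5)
Your proof is correct and takes essentially the same route as the paper's: reduce to $W(\bfQ)$ via Lemma~\ref{l-highest weight module}, then combine the tensor decomposition in Proposition~\ref{Decomposition of Weyl modules for affine gln}(2) with the tensor-product $\ell$-weight formula from \cite[Lem.~4.1]{FM} and Lemma~\ref{l-weight for L(Qia)}. Your treatment of the determinant factors is fine, though note Lemma~\ref{l-weight for L(Qia)} already covers the case $i=n$, so no separate argument was needed there.
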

\begin{proof}
According to \cite[Lem. 4.1]{FM}, Lemma \ref{l-weight for L(Qia)} and Proposition \ref{Decomposition of Weyl modules for affine gln}(2) we have $\wtl(W(\bfQ))\han\bfQ\Rnm$. Thus by Lemma \ref{l-highest weight module} we conclude that $\wtl(V)\han\bfQ\Rnm$.
\end{proof}

\section{Blocks of $\afUglv$ and $\afSrv$}

\subsection{}
Let us recall the definition of blocks of an abelian category as follows (see \cite{EM}). Let $\sC$ be an abelian category, in which every object has finite length. Say that two indecomposable objects $X_1,X_2$ of $\sC$ are linked if there do not exist abelian subcategories $\sC_k$ ($1\leq k\leq 2$) such that $\sC =\sC_1\oplus \sC_2$ with
$X_1 \in \sC_1$, $X_2\in\sC_2$. Then linking is an equivalence relation. Let $I$ be the set of equivalence classes of linked indecomposable objects in $\sC$. For $\al\in I$ let $\sC_\al$ be the subcategory of $\sC$, consisting of direct sums of indecomposable objects
from $\al$. Then $\sC=\oplus_{\al\in I}\sC_\al$. The subcategories $\sC_\al$ are called the blocks of $\sC$.

For $\bfQ\in\Qnp$ the element $\chi_\bfQ:=\ol\bfQ\in\Xn/\Rn$ is called the elliptic character of $L(\bfQ)$. A $\afUglv$-module $V$ in $\hFn$ is said to have elliptic character $\chi\in\Xn/\Rn$ if every irreducible composition factor of $V$ has elliptic character $\chi$. Let $\hFnchi$ be the subcategory of $\hFn$ consisting of $\afUglv$-modules in $\hFn$ with elliptic character $\chi$.

Let $\hFnr$ be the category of finite dimensional $\afSrv$-modules. According to Corollary \ref{finite dimensional afSrv modules} we know that $\hFnr$ is a subcategory of $\hFn$.
Let $\hFnrchi$ be the subcategory of $\hFnr$ consisting of finite dimensional $\afSrv$-modules with elliptic character $\chi$.

\subsection{}
Proposition \ref{indecomposable module} and Proposition \ref{Q=Q'} are the key to the proof of Theorem \ref{block}. To prove Proposition \ref{indecomposable module} we need the following two lemmas.
\begin{Lem}\label{Weyl modules belonging to hFnrQ}
(1) For $\bfQ\in\Qnpr$, $W(\bfQ)\in\hFnrQ$.

(2) We have $\hFnchio\ot\hFnchit\han\hFnchiot$.
\end{Lem}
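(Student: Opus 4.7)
For part (1), I would first invoke Corollary \ref{Weyl module for affine q-Schur algebras} to see that $W(\bfQ)$ is an $\afSrv$-module and Proposition \ref{Decomposition of Weyl modules for affine gln}(3) to place it in $\hFn$; together these give $W(\bfQ)\in\hFnr$. Next, since $W(\bfQ)=\afUglv\bar 1$ with $\bar 1=1+J(\bfQ)$ satisfying $\ttx_{j,s}^+\bar 1=0$, $\ms Q_i^\pm(u)\bar 1=Q_i^\pm(u)\bar 1$ and $\ttk_i\bar 1=v^{\la_i}\bar 1$, the module $W(\bfQ)$ is an $\ell$-highest weight module with $\ell$-highest weight $\bfQ$, so Proposition \ref{l-weight for l highest weight module} yields $\wtl(W(\bfQ))\han\bfQ\Rnm$. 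Finally, for any irreducible composition factor $L(\bfQ')$ of $W(\bfQ)$, iterating the additivity identity $\dim V_\bfQ=\dim W_\bfQ+\dim(V/W)_\bfQ$ from the proof of Lemma \ref{belong to hFn} through a Jordan-H\"older series shows $\bfQ'\in\wtl(W(\bfQ))\han\bfQ\Rnm$; hence $\ol{\bfQ'}=\ol\bfQ$. All composition factors of $W(\bfQ)$ therefore have elliptic character $\ol\bfQ$, giving $W(\bfQ)\in\hFnrQ$.

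For part (2), let $V_i\in\hFnchii$ for $i=1,2$. The Hopf structure \eqref{Hopf} together with [FM, Lem.~4.3] (already used in the proof of Proposition \ref{Decomposition of Weyl modules for affine gln}(3)) ensures that $V_1\ot V_2\in\hFn$. Take any composition factor $L(\bfQ')$ of $V_1\ot V_2$; by the additivity argument invoked above, $\bfQ'\in\wtl(V_1\ot V_2)$. The multiplicative tensor product rule for $\ell$-weights [FM, Lem.~4.1] then writes $\bfQ'=\bfQ_1'\bfQ_2'$ with $\bfQ_i'\in\wtl(V_i)$. Applying the same additivity inside each $V_i$, there is a composition factor $L(\bfQ_i'')$ of $V_i$ with $\bfQ_i'\in\wtl(L(\bfQ_i''))$; since $V_i$ has elliptic character $\chi_i$ we have $\ol{\bfQ_i''}=\chi_i$, and Corollary \ref{l-weight for L(bfQ)} gives $\bfQ_i'\in\bfQ_i''\Rnm$, so $\ol{\bfQ_i'}=\chi_i$. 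Consequently $\ol{\bfQ'}=\ol{\bfQ_1'}\cdot\ol{\bfQ_2'}=\chi_1\chi_2$, proving $V_1\ot V_2\in\hFnchiot$.

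The main obstacle is purely bookkeeping: transporting $\ell$-weight information cleanly between a module and its Jordan-H\"older factors on one hand, and through the tensor product on the other. Chaining the three ingredients, namely the Jordan-H\"older additivity from Lemma \ref{belong to hFn}, the tensor product rule [FM, Lem.~4.1] for $\ell$-weights, and the inclusion $\wtl(L(\bfQ''))\han\bfQ''\Rnm$ from Corollary \ref{l-weight for L(bfQ)}, must be done carefully; once this is in place, the claim becomes the evident multiplicativity $\ol{\bfQ_1'}\cdot\ol{\bfQ_2'}=\chi_1\chi_2$ in the quotient group $\Xn/\Rn$.
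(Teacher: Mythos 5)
Your proposal is correct and follows essentially the same route as the paper: both parts rest on Corollary \ref{Weyl module for affine q-Schur algebras}, Proposition \ref{l-weight for l highest weight module}, Corollary \ref{l-weight for L(bfQ)}, the tensor rule for $\ell$-weights from \cite[Lem.~4.1]{FM}, and Jordan--H\"older additivity of $\ell$-weight spaces. The only difference is presentational — the paper first reduces part (2) to the case $V_i=L(\bfQ_i)$ irreducible via the Grothendieck ring, whereas you work directly with arbitrary $V_i$ and transport $\ell$-weights through composition factors, which just makes explicit a step the paper leaves implicit.
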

\begin{proof}
According to Corollary \ref{Weyl module for affine q-Schur algebras}, we have
$W(\bfQ)\in\hFnr$ for $\bfQ\in\Qnpr$. If $L(\bfQ')$ is a composition factor of $W(\bfQ)$, then by Proposition \ref{l-weight for l highest weight module}, we have $\bfQ'\in\bfQ\Rnm$. This implies that $\ol{\bfQ}=\ol{\bfQ'}$. The assertion (1) follows. To prove (2), it is enough to prove
that $L(\bfQ_1)\ot L(\bfQ_2)\in\hFnQot$ for $\bfQ_1,\bfQ_2\in\Qnp$. Suppose that $L(\bfQ)$ is a composition factor of $L(\bfQ_1)\ot L(\bfQ_2)$. Then by \cite[Lem. 4.1]{FM} and Proposition \ref{l-weight for l highest weight module} we conclude that $\wtl(L(\bfQ))\han\wtl(L(\bfQ_1))\wtl(L(\bfQ_2))\han\bfQ_1\bfQ_2\Rnm$. Thus $\ol{\bfQ}=\ol{\bfQ_1}\cdot\ol{\bfQ_2}$. The assertion (2) follows.
\end{proof}

We define an algebra anti-automorphism $\tau:\afUglv\ra\afUglv$ by
$$\tau(\ttx_{j,s}^\pm)=\ttx_{j,s}^\mp,\quad\tau(\ttk_i^{\pm 1})=
\ttk_i^{\pm 1},\quad\tau(\ttg_{i,t})=\ttg_{i,t}$$
for $1\leq j\leq n-1$, $s\in\mbz$, $1\leq i\leq n$ and $t\in\mbz\backslash\{0\}$.
Given a finite dimensional left $\afUglv$-module $V$, we write $V^\circ$ for the dual space $\Hom_{\mbc}(V,\mbc)$ regarded as a left $\afUglv$-module via the action
$(hf)(x)=f(\tau(h)x)$, for $h\in\afUglv$, $f\in\Hom_{\mbc}(V,\mbc)$ and $x\in V$. Since
$\tau(\ttk_i^{\pm 1})=\ttk_i^{\pm 1}$ and $\tau(\ttg_{i,t})=\ttg_{i,t}$, we have $\ch(V)=\ch(V^\circ)$. This implies that
\begin{equation}\label{dual of irreducible module}
 L(\bfQ)^\circ\cong L(\bfQ)
\end{equation}
 for $\bfQ\in\Qnp$.
For $M_1,M_2\in\hFn$ we have
$$\Ext^1_{\hFn}(M_1,M_2)\cong\Ext^1_{\hFn}(M_2^\circ,M_1^\circ).$$
Thus
$$\Ext^1_{\hFn}(L(\bfQ),L(\bfQ'))\cong\Ext^1_{\hFn}(L(\bfQ'),L(\bfQ))$$
for $\bfQ,\bfQ'\in\Qnp$.

\begin{Lem}\label{extension between different chi}
(1) Let $W\in\hFnchi$ and $\bfQ_0\in\Qnp$ with $\chi\not=\ol{\bfQ_0}$. Then
$\Ext^1_{\hFn}(W,L(\bfQ_0))=0$.

(2) Assume that $V_i\in\hFnchii$, $i=1,2$ and that $\chi_1\not=\chi_2$. Then
$\Ext^1_{\hFn}(V_1,V_2)=0$.
\end{Lem}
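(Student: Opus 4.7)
The plan is to prove (1) first and then derive (2) by a standard dévissage.

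For part (1), applying the duality $\Ext^1_{\hFn}(M_1,M_2)\cong\Ext^1_{\hFn}(M_2^\circ,M_1^\circ)$ together with \eqref{dual of irreducible module} (and the observation that $W^\circ\in\hFnchi$, since $V\mapsto V^\circ$ preserves composition factors), it suffices to show that every short exact sequence
$$
0 \lra W \lra V \lra L(\bfQ_0) \lra 0
$$
in $\hFn$ splits, where $W\in\hFnchi$ and $\chi\neq\chi_0:=\ol{\bfQ_0}$. I plan to construct a splitting by producing an $\ell$-highest weight vector of weight $\bfQ_0$ inside $V$ and letting it generate a section.

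By Corollary \ref{l-weight for L(bfQ)}, every composition factor $L(\bfQ')$ of $W$ satisfies $\wtl(L(\bfQ'))\han\chi$, so $W_{\bfQ_0}=0$. Hence, under the decomposition $V=\bigoplus_\bfQ V_\bfQ$ provided by $V\in\hFn$, the quotient map restricts to an isomorphism $V_{\bfQ_0}\cong L(\bfQ_0)_{\bfQ_0}$, both one-dimensional; let $w_0\in V_{\bfQ_0}$ lift the $\ell$-highest weight vector of $L(\bfQ_0)$. Since $V_{\bfQ_0}$ is one-dimensional, the Cartan generators act on $w_0$ semisimply with the eigenvalues prescribed by $\bfQ_0$. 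To show $\ttx_{j,s}^+w_0=0$: its image in $L(\bfQ_0)$ vanishes, so $\ttx_{j,s}^+w_0\in W$; writing $\ttx_{j,s}^+w_0=\sum_\bfQ x_\bfQ$ with $x_\bfQ\in V_\bfQ$, each nonzero $x_\bfQ$ lies in $W_\bfQ$ and therefore forces $\bfQ\in\chi$. On the other hand, the Drinfeld relations (QLA3, QLA5) imply that $\ttx_{j,s}^+$ sends $V_{\bfQ_0}$ into $\bigoplus_{\bfQ'\in\bfQ_0\Rn}V_{\bfQ'}$, so a nonzero $x_\bfQ$ forces $\bfQ\in\chi_0$; since $\chi\neq\chi_0$, all $x_\bfQ=0$. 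By Proposition \ref{l-weight for l highest weight module}, the $\ell$-highest weight submodule $V':=\afUglv w_0\han V$ has $\wtl(V')\han\bfQ_0\Rnm\han\chi_0$, so $V'\cap W=0$; and since $V'$ surjects onto $V/W=L(\bfQ_0)$ (its image contains $\bar w_0$), we obtain $V=V'\oplus W$.

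Part (2) will follow by induction on the composition length of $V_2$. When $V_2=L(\bfQ_0)$ is simple with $\ol{\bfQ_0}=\chi_2$, this is immediate from (1). In general, take a short exact sequence $0\to L(\bfQ_0)\to V_2\to V_2'\to 0$ with $L(\bfQ_0),V_2'\in\hFnchit$ and $V_2'$ of smaller length, and apply $\Hom_{\hFn}(V_1,-)$; in the resulting long exact sequence $\Ext^1_{\hFn}(V_1,L(\bfQ_0))\to\Ext^1_{\hFn}(V_1,V_2)\to\Ext^1_{\hFn}(V_1,V_2')$ the outer terms vanish by (1) and induction. The principal technical hurdle is the $\ell$-weight shift property invoked in the proof of (1)---that $\ttx^+_{j,s}$ carries a generalized $\ell$-weight space $V_\bfQ$ into $\bigoplus_{\bfQ'\in\bfQ\Rn}V_{\bfQ'}$. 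This is a standard consequence of the Cartan commutators (QLA3) and the definition of $\ms Q_i^\pm(u)$, but it requires careful verification.
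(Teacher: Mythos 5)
The proposal diverges from the paper's route and, as written, leaves a genuine gap. The crucial unproved step is the $\ell$-weight shift property --- that $\ttx^+_{j,s}$ carries $V_{\bfQ_0}$ into $\bigoplus_{\bfQ'\in\bfQ_0\Rn}V_{\bfQ'}$ --- which you flag as a ``technical hurdle'' but do not verify. This is not a routine consequence of (QLA3) and (QLA5): the commutator $[\ttg_{i,s},\ttx^+_{j,t}]$ produces a term $\ttx^+_{j,s+t}$ with a \emph{different} loop index, so conjugating a single mode $\ttx^+_{j,t}$ by $\ttg_{i,s}$ does not simply shift a generalized eigenvalue. Proving that each $\ms Q_{i,s}$ acts on $\ttx^+_{j,t}w_0$ with eigenvalue shifted by an $\ell$-root of $\Rn$ requires a generating-function argument in the style of $q$-character theory, and none of the supporting results available in the paper (Corollary \ref{l-weight for L(bfQ)}, Proposition \ref{l-weight for l highest weight module}) gives you this: those constrain the $\ell$-weights of an entire $\ell$-highest weight module, not the action of a single $\ttx^+_{j,s}$ on an $\ell$-weight space.

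The paper sidesteps exactly this issue. For part (1) it first does a d\'evissage on $\ell(W)$ to reduce to $W=L(\bfQ)$ simple, then applies duality to arrange $\la=\deg\bfQ\not\lhd\mu=\deg\bfQ_0$; since $L(\bfQ_0)=\oplus_{\nu\unlhd\mu}L(\bfQ_0)_\nu$ and $L(\bfQ)$ has highest classical weight $\la$, the \emph{classical} weight space $V_{\la+\al_i}$ vanishes, forcing $\ttx^+_{i,s}V_\la=0$ from (QLA2) alone --- no $\ell$-weight shift is needed. Your part (2) d\'evissage is correct; the same reduction should be done in part (1) before constructing a splitting, because handling general $W\in\hFnchi$ directly is what forces you to invoke the unproven shift (the classical weight space $W_{\mu+\al_j}$ need not vanish when $W$ is not simple). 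If you do reduce to the simple case, a case split via duality on the dominance order lets you run the classical-weight argument and you never need to know how $\ttx^+_{j,s}$ acts on $\ell$-weight spaces.
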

\begin{proof}
Let $\ell(W)$ be the length of $W$. To prove (1), we proceed by induction on $\ell(W)$.  Suppose first that $\ell(W)=1$. Then $W=L(\bfQ)$ for some $\bfQ\in\Qnp$.
Consider a short exact sequence
\begin{equation}\label{exact sequence}
 0\lra L(\bfQ_0)\stackrel{f}{\lra} V\stackrel{g}{\lra} L(\bfQ)\lra 0.
\end{equation}
We have to show that the short exact sequence is split. Let $\la=\deg\bfQ$ and $\mu=\deg\bfQ_0$.
Then one of the  following hold,
\begin{itemize}
\item[(i)] $\la \lhd\mu$, or
\item[(ii)] $\la\not\!\!\lhd\,\mu$.
\end{itemize}
From \eqref{dual of irreducible module} we see that the short exact sequence \eqref{exact sequence} yields a short exact sequence
$$0\lra L(\bfQ){\lra} V^\circ{\lra} L(\bfQ_0)\lra 0.$$
Thus we may assume that $\la\not\!\!\lhd\,\mu$ without loss of generality.
For $1\leq i\leq n-1$ let $\al_i=\bse_i-\bse_{i+1}$, where $\bse_i$ is defined in \eqref{ei}.
Since $\la\not\!\!\lhd\,\mu$ and $L(\bfQ_0)=\oplus_{\nu\unlhd\mu}L(\bfQ_0)_\nu$ we have
$L(\bfQ_0)_{\la+\al_i}=0$ for $1\leq i\leq n-1$. This implies that $\dim V_{\la+\al_i}=
\dim L(\bfQ)_{\la+\al_i}+\dim L(\bfQ_0)_{\la+\al_i}=0$ and hence
\begin{equation}\label{action of xis+ on Vla}
\ttx_{i,s}^+V_\la=0
\end{equation}
 for $1\leq i\leq n-1$ and $s\in\mbz$.
Since the elements $\ms Q_{i,s}$ ($1\leq i\leq n$, $s\in\mbz$) commute among themselves, there exists an element $0\not= w\in V_{\bfQ}\han V_\la$ such that
$$\ms Q_{i,s}w=Q_{i,s}w$$
for $1\leq i\leq n$ and $s\in\mbz$. This, together with Lemma \ref{l-highest weight module} and \eqref{action of xis+ on Vla}, implies that
$\afUglv w$ is a quotient of $W(\bfQ)$. It follows from Lemma \ref{Weyl modules belonging to hFnrQ} that $\afUglv w\in\hFnQ$. Thus, since $\ol\bfQ\not=\ol{\bfQ_0}$, we have $f(L(\bfQ_0))\not\han\afUglv w$. This implies that $f(L(\bfQ_0))\cap\afUglv w=0$. Therefore,
we have $$V=f(L(\bfQ_0))\oplus\afUglv w.$$
This shows that the short exact sequence \eqref{exact sequence} is split.

Now we assume that $\ell(W)>1$. Let $W_1$ be a proper nontrivial submodule of $W$ and let $W_2=W/W_1$. Then by induction we have $\Ext^1_{\hFn}(W_1,L(\bfQ_0))=\Ext^1_{\hFn}(W_2,L(\bfQ_0))=0$. Furthermore the short exact sequence
$$ 0\lra W_1\lra W\lra W_2\lra 0$$
yields a short exact sequence
$$\Ext^1_{\hFn}(W_2,L(\bfQ_0))\lra\Ext^1_{\hFn}(W,L(\bfQ_0))
\lra\Ext^1_{\hFn}(W_1,L(\bfQ_0)).$$
 Thus
$\Ext^1_{\hFn}(W,L(\bfQ_0))=0$. The assertion (1) follows.
The assertion (2) is proved similarly by induction on $\ell(V_2)$.
\end{proof}

\begin{Prop}\label{indecomposable module}
Let $V$ be an indecomposable $\afUglv$-module in $\hFn$. Then $V\in\hFnQ$ for some $\bfQ\in\Qnp$.
\end{Prop}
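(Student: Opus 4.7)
The plan is, given an indecomposable $V\in\hFn$, to pick a simple submodule $L(\bfQ)\hookrightarrow V$ (which exists since $V$ has finite length), set $\chi=\ol{\bfQ}$, and to prove $V\in\hFnchi$. Define $V^\chi\han V$ as the sum of all submodules of $V$ that lie in $\hFnchi$; because $W_1+W_2$ is a quotient of $W_1\op W_2$, whose composition factors all have elliptic character $\chi$, this family is closed under sums, so $V^\chi$ is the unique maximal submodule of $V$ lying in $\hFnchi$, and $V^\chi\supseteq L(\bfQ)\neq 0$. The whole task reduces to showing $V=V^\chi$, which gives $V\in\hFnchi=\hFnQ$.

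First I would verify that $V/V^\chi$ has no composition factor of elliptic character $\chi$: a simple submodule $L(\bfQ')\hookrightarrow V/V^\chi$ with $\ol{\bfQ'}=\chi$ would have preimage $\ti V\han V$ fitting in $0\to V^\chi\to\ti V\to L(\bfQ')\to 0$ with both ends in $\hFnchi$, forcing $\ti V\in\hFnchi$ and contradicting the maximality of $V^\chi$. In particular every composition factor of $V/V^\chi$ has elliptic character distinct from $\chi$.

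The key step is then $\Ext^1_{\hFn}(V/V^\chi,V^\chi)=0$, and this is where I expect the main technical work. I would strengthen Lemma~\ref{extension between different chi}(2) by induction on $\ell(M)$ to: if $N\in\hFnchi$ and $M\in\hFn$ has every composition factor of elliptic character $\neq\chi$, then $\Ext^1_{\hFn}(M,N)=0$; the base case $\ell(M)=1$ is exactly Lemma~\ref{extension between different chi}(2), and the inductive step uses the long exact sequence of $\Ext^\bullet$ attached to a short exact sequence $0\to M_1\to M\to M_2\to 0$ with $M_1, M_2$ of smaller length. Applying this with $M=V/V^\chi$ and $N=V^\chi$, the short exact sequence $0\to V^\chi\to V\to V/V^\chi\to 0$ splits. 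Since $V$ is indecomposable and $V^\chi\neq 0$, this forces $V/V^\chi=0$ and hence $V=V^\chi\in\hFnQ$, as desired.
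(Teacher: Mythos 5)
Your overall strategy (isolate the maximal $\hFnchi$-submodule $V^\chi$ and show it is all of $V$) is a legitimate alternative to the paper's induction on $\ell(V)$, and your strengthening of Lemma~\ref{extension between different chi}(2) by induction on length is fine. However, there is a genuine gap at the sentence ``In particular every composition factor of $V/V^\chi$ has elliptic character distinct from $\chi$.'' What you have actually proved is only that no \emph{simple submodule} of $V/V^\chi$ has elliptic character $\chi$; that is strictly weaker. A priori $V/V^\chi$ could have a composition factor of character $\chi$ sitting higher up in a composition series (e.g., a uniserial quotient with socle of some $\chi'\neq\chi$ and a factor of character $\chi$ above it), and nothing in the maximality of $V^\chi$ directly rules this out. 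Since the hypothesis of your strengthened Ext--vanishing lemma is precisely that \emph{every} composition factor of $M=V/V^\chi$ has character $\neq\chi$, the key splitting $\Ext^1_{\hFn}(V/V^\chi,V^\chi)=0$ is not yet established.

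The conclusion you want is true, but it needs an argument. One way to close the gap: take a composition series $V^\chi=W_0\subset W_1\subset\cdots\subset W_m=V$ and let $i$ be minimal with $W_i/W_{i-1}$ of character $\chi$. Then $W_{i-1}/V^\chi$ has all composition factors of character $\neq\chi$, so by your strengthened lemma $W_{i-1}\cong V^\chi\oplus Z$ with $Z\cong W_{i-1}/V^\chi$. Using duality ($L(\bfQ)^\circ\cong L(\bfQ)$) one gets $\Ext^1_{\hFn}\bigl(W_i/W_{i-1},Z\bigr)=0$, so the extension $0\to V^\chi\oplus Z\to W_i\to W_i/W_{i-1}\to 0$ is a pushforward of a class in $\Ext^1_{\hFn}(W_i/W_{i-1},V^\chi)$; this produces a submodule $E\subseteq W_i\subseteq V$ with $E\supsetneq V^\chi$ and $E\in\hFnchi$, contradicting maximality of $V^\chi$. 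After this repair the rest of your argument goes through.

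For comparison, the paper avoids this subtlety entirely by inducting on $\ell(V)$: it picks an irreducible \emph{sub}module $L(\bfQ_0)$, decomposes $W=V/L(\bfQ_0)$ into indecomposables $W_j$, applies the inductive hypothesis to place each $W_j$ in some $\hFnchij$, and then uses Lemma~\ref{extension between different chi} plus indecomposability of $V$ to force $\chi_j=\ol{\bfQ_0}$ for all $j$. In that route the pieces $W_j$ come pre-packaged with a single elliptic character, so the ``all composition factors'' hypothesis is automatic. Your route is more direct in spirit (no reference to a decomposition of $W$ into indecomposables) but pays for it with exactly the filtration argument you skipped.
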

\begin{proof}
The proof is similar to that of \cite[Th. 8.3(i)]{CM}.
We proceed by induction on $\ell(V)$. If $\ell(V)=1$ then $V=L(\bfQ)$ for some $\bfQ\in\Qnp$. By definition we have $V\in\hFnQ$.
Now we assume $\ell(V)>1$. Let $L(\bfQ_0)$ be an irreducible submodule of $V$ and let $W=V/L(\bfQ_0)$. Write $W=\oplus_{1\leq j\leq s}W_j$, where $W_j$ is indecomposable for all $j$. By induction, for each $1\leq j\leq r$, there exist $\chi_j\in\Xn/\Rn$ such that $W_j\in\hFnchij$. We have to show that $\bfQ_0=\chi_j$ for all $1\leq j\leq s$. If this is not true, then there exists $j_0$ such that $\chi_{j_0}\not=\ol{\bfQ_0}$.
By Lemma \ref{extension between different chi} we have
$\Ext^1_{\hFn}(W,L(\bfQ_0))\cong\Ext^1_{\hFn}(\oplus_{j\not=j_0}W_j,L(\bfQ_0))$. It follows that there exist a short exact sequence
$ 0\ra L(\bfQ_0)\ra V'\ra \oplus_{j\not=j_0}W_j \ra 0$ in $\Ext^1_{\hFn}(\oplus_{j\not=j_0}W_j,L(\bfQ_0))$
such that the short exact sequence
$ 0\ra L(\bfQ_0)\ra V\ra W\ra 0$
is equivalent to the short exact sequence $ 0\ra L(\bfQ_0)\ra V'\oplus W_{j_0}\ra \oplus_{j\not=j_0}W_j \oplus W_{j_0}\ra 0$. In particular, we have
$V\cong V'\oplus W_{j_0}$, which is a contradiction. Thus
$\bfQ_0=\chi_j$ for all $j$ and $V\in\hFnQz$.
\end{proof}

\subsection{}
Before proving Proposition \ref{Q=Q'}, we need several preliminary lemmas.
According to \cite{AK,Chari,VV02} we have the following result.
\begin{Lem}\label{cyclic}
Let $1\leq i_1,\cdots,i_r\leq n$, $a_1,\cdots,a_r\in\mbc^*$. Then there exist a permutation
$\sg\in\fSr$ such that $L(\bfQ_{i_1,a_{\sg(1)}})\ot\cdots \ot L(\bfQ_{i_r,a_{\sg(r)}})$ is cyclic on the tensor product of the $\ell$-highest weight vectors.
\end{Lem}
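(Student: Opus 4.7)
The plan is to reduce the $r$-fold cyclicity problem to pairwise cyclicity via an inductive criterion, and then to produce the desired ordering by a topological sort, drawing on the classical analyses of \cite{AK,Chari,VV02}.

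\emph{Step 1: Reduction to pairs.} The first step is to invoke the inductive cyclicity criterion of \cite{Chari}: given $\ell$-highest weight modules $M_1,\ldots,M_r$ with highest weight vectors $v_1,\ldots,v_r$, the tensor product $M_1\ot\cdots\ot M_r$ is cyclic on $v_1\ot\cdots\ot v_r$ provided every pair $M_k\ot M_l$ with $k<l$ is cyclic on $v_k\ot v_l$. The proof of this reduction uses the coproduct \eqref{Hopf} of $\afUglv$, together with the triangular decomposition, to expand the action of $\afUglv$ on the full tensor product into actions on adjacent pairs, so that the $r$-fold cyclicity follows from consecutive pairwise cyclicity.

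\emph{Step 2: Pairwise criterion.} The second step is to recall, from \cite{AK,VV02}, the explicit structure of the tensor products $L(\bfQ_{i,a})\ot L(\bfQ_{j,b})$. For each pair $(i,j)$ there is a finite subset $\Sigma_{i,j}\subset\mbc^*$, consisting of specific powers of $\ttv$ read off from the poles of the normalized $R$-matrix between the relevant evaluation modules, with the property that whenever $b/a\notin\Sigma_{i,j}$ the tensor product $L(\bfQ_{i,a})\ot L(\bfQ_{j,b})$ is cyclic on the tensor product of $\ell$-highest weight vectors. The crucial antisymmetry property is that $b/a\in\Sigma_{i,j}$ and $a/b\in\Sigma_{j,i}$ cannot hold simultaneously, so at least one of the two orderings of any given pair produces a cyclic tensor product.

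\emph{Step 3: Assembling the permutation.} The third step is to extract a permutation $\sg\in\fSr$ from the pairwise data. Define a binary relation on $\{1,\ldots,r\}$ by declaring $k\to l$ whenever $a_l/a_k\in\Sigma_{i_k,i_l}$ while $a_k/a_l\notin\Sigma_{i_l,i_k}$; by the antisymmetry of Step 2 this is well-defined. Because the elements of $\Sigma_{i,j}$ are $\ttv$-powers in a determined range, the relation extends to a genuine partial order (refinable by $|a_k|$ after generic perturbation). A topological sort of this partial order produces a permutation $\sg$ such that, for all $k<l$, the pair $\bigl(L(\bfQ_{i_k,a_{\sg(k)}}),L(\bfQ_{i_l,a_{\sg(l)}})\bigr)$ is cyclic in the chosen order; combining with Step 1 yields the desired $r$-fold cyclicity.

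The main obstacle is Step 2, specifically the antisymmetry of the bad ratio sets $\Sigma_{i,j}$, which is the technical core of the Akasaka--Kashiwara and Varagnolo--Vasserot analyses of $R$-matrix poles for evaluation modules over $\afUglv$. Once these inputs are granted, Step 1 is a standard induction and Step 3 is a purely combinatorial topological sort, so the bulk of the work has already been done in \cite{AK,Chari,VV02}.
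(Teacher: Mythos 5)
The paper establishes this lemma purely by citation to \cite{AK,Chari,VV02} and gives no argument of its own, so there is no in-paper proof to compare against; yours is a reconstruction from those references. The overall shape you propose --- reduce to pairwise cyclicity, invoke the $R$-matrix pole criterion together with its antisymmetry, and assemble a suitable permutation --- is indeed the standard approach in the cited works.

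That said, Step 3 has a genuine gap. Having defined the relation $k\to l$ (meaning: placing $k$ before $l$ is bad, while $l$ before $k$ is good), you assert that it ``extends to a genuine partial order (refinable by $|a_k|$ after generic perturbation).'' This justification does not hold up: the parameters $a_k$ are given data and cannot be perturbed, and the modulus ordering fails whenever $|\ttv|=1$, which is perfectly compatible with ``$\ttv$ not a root of unity'' (e.g.\ $\ttv=e^{i\pi\theta}$ with $\theta$ irrational). What is actually needed is that the bad relation is \emph{acyclic}, and this follows from a different, purely arithmetic observation: the bad ratios in $\Sigma_{i,j}$ are strictly positive $\ttv$-powers, so a directed cycle $k_1\to k_2\to\cdots\to k_m\to k_1$ would give $1=a_{k_1}/a_{k_1}=\ttv^{s_1+\cdots+s_m}$ with each $s_i>0$, contradicting that $\ttv$ is not a root of unity. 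Once acyclicity is in hand, a topological sort of the reversed relation produces $\sg$, but the non-root-of-unity step must be made explicit rather than deferred to ``generic perturbation.'' A minor further inconsistency: in Step 1 you state the criterion as ``every pair $M_k\ot M_l$ with $k<l$ is cyclic'' but then say the $r$-fold cyclicity ``follows from consecutive pairwise cyclicity''; these are different hypotheses, and since your Step 3 actually produces an ordering in which \emph{all} pairs (not merely consecutive ones) are good, you should keep to the all-pairs version of the criterion throughout.
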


\begin{Lem}\label{commutative}
Let $V_i\in\hFn$ ($1\leq i\leq r$). Then $V_1\ot\cdots\ot V_r$ and $V_{\sg(1)}\ot\cdots \ot  V_{\sg(r)}$ have the same composition factors for all $\sg\in\fSr$.
\end{Lem}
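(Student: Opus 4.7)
The plan is to argue via $v$-characters and a triangularity argument. First, I would observe that the $v$-character map $\ch\colon\hFn\to\mbz[\Xn]$ is multiplicative on tensor products---this follows from \cite[Lem.~4.1]{FM}, which describes how the generating series $\ms Q_i^\pm(u)$ act on a tensor product via the coproduct. Iterating,
\[\ch(V_1\ot\cdots\ot V_r)=\ch(V_1)\cdots\ch(V_r),\]
and since $\mbz[\Xn]$ is a commutative ring, the right-hand side is invariant under permuting the factors. Consequently $V_1\ot\cdots\ot V_r$ and $V_{\sg(1)}\ot\cdots\ot V_{\sg(r)}$ share the same $v$-character for every $\sg\in\fSr$.

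Next I would show that the characters $\{\ch(L(\bfQ))\mid \bfQ\in\Qnp\}$ of the simple objects of $\hFn$ are linearly independent in $\mbz[\Xn]$. Equip $\Qnp$ with the partial order $\bfQ'\leq\bfQ\Leftrightarrow\bfQ^{-1}\bfQ'\in\Rnm$. By Corollary \ref{l-weight for L(bfQ)}, $\wtl(L(\bfQ))\han\bfQ\Rnm$, so every $\ell$-weight occurring in $\ch(L(\bfQ))$ is $\leq\bfQ$; moreover $\bfQ$ itself occurs with multiplicity one as the $\ell$-highest weight of $L(\bfQ)$. Given a finite nontrivial relation $\sum_{\bfQ\in S}a_\bfQ\ch(L(\bfQ))=0$ with every $a_\bfQ\neq 0$, choose $\bfQ_0\in S$ maximal in this order. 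For any $\bfQ\in S\setminus\{\bfQ_0\}$ we then have $\bfQ_0\notin\wtl(L(\bfQ))$, since otherwise $\bfQ_0\leq\bfQ$ would violate maximality. Extracting the coefficient of $\bfQ_0$ from the relation forces $a_{\bfQ_0}=0$, a contradiction.

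Finally, every $V\in\hFn$ is finite dimensional, hence of finite length, so one has $\ch(V)=\sum_{\bfQ\in\Qnp}[V:L(\bfQ)]\ch(L(\bfQ))$, where $[V:L(\bfQ)]$ denotes the composition multiplicity. By the linear independence established above, the multiplicities $[V:L(\bfQ)]$ are determined by $\ch(V)$. Combining this with the first step yields the lemma. The main technical point is the multiplicativity of $\ch$, which relies on the behaviour of the Drinfeld generators $\ms Q_i^\pm(u)$ under the coproduct as recorded in \cite[Lem.~4.1]{FM}; the triangularity step and the recovery of composition factors from characters are then routine.
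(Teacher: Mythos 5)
Your argument is correct, but it takes a more self-contained route than the paper. The paper's proof is two lines: it passes to the Grothendieck ring $\Rep\,\afUglv$ and cites \cite[Lem.~4.5]{FM} for the fact that this ring is commutative, whence $[V_1\ot\cdots\ot V_r]=[V_1]\cdots[V_r]$ is symmetric in the factors. You instead re-derive that commutativity: multiplicativity of $\ch$ on tensor products (from \cite[Lem.~4.1]{FM}) plus injectivity of $\ch$ on the Grothendieck group, the latter proved by a triangularity argument with respect to the order $\bfQ'\leq\bfQ\Leftrightarrow\bfQ^{-1}\bfQ'\in\Rnm$ using Corollary~\ref{l-weight for L(bfQ)}. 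The triangularity step is sound: the degree map $\Xn\to\mbz^n$ sends $\bt_{i,a}$ to the simple root $\bse_i-\bse_{i+1}$, so the monoid generated by the $\bt_{i,a}$ meets its inverse only in $\mathbf 1$ and the relation is indeed a partial order; maximality of $\bfQ_0$ then isolates the coefficient $a_{\bfQ_0}$ as you say. What your version buys is independence from the black-box commutativity statement, reusing machinery (Corollary~\ref{l-weight for L(bfQ)}) already established in the paper; the cost is that you are essentially reproving the content of \cite[Lem.~4.5]{FM}, which the paper is content to quote.
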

\begin{proof}
Let $\Rep\,\afUglv$ be the Grothendieck ring of the category $\hFn$. For $X\in\hFn$ we write $[X]$ for the class of $X$ in $\Rep\,\afUglv$. According
to \cite[Lem. 4.5]{FM} we know that $\Rep\,\afUglv$ is a commutative ring. It follows that for any $\sg\in\fSr$ we have $[V_1\ot\cdots\ot V_r]=[V_1]\cdots[V_r]=[V_{\sg(1)}\ot\cdots V_{\sg(r)}]$. The assertion follows.
\end{proof}

For $V_1,V_2\in\hFn$, we write
$V_1\sim_n V_2$
if $V_1$ and $V_2$ belong to the same block of the category $\hFn$. Similarly, for $V_1,V_2\in\hFnr$, we write
$V_1\sim_{n,r} V_2$
if $V_1$ and $V_2$ belong to the same block of the category  $\hFnr$.
For $a\in\mbc^*$ let
$L_a=L(\bfQ_{1,a}).$

\begin{Lem}\label{link}
Let $a_1,\cdots,a_r\in\mbc^*$ and let $\sg$ be an element in the Symmetric
group $\fSr$. Then $L_{a_1}\ot\cdots\ot L_{a_r}\sim_n L_{a_{\sg(1)}}\ot\cdots\ot L_{a_{\sg(r)}}$ and $L_{a_1}\ot\cdots\ot L_{a_r}\sim_{n,r} L_{a_{\sg(1)}}\ot\cdots\ot L_{a_{\sg(r)}}$.
\end{Lem}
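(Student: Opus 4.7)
The plan is to produce a single indecomposable module whose block captures the composition factors common to $V:=L_{a_1}\ot\cdots\ot L_{a_r}$ and $V':=L_{a_{\sg(1)}}\ot\cdots\ot L_{a_{\sg(r)}}$, thereby forcing $V$ and $V'$ into that same block. By Lemma \ref{cyclic} (applied with $i_1=\cdots=i_r=1$) there is a permutation $\tau\in\fSr$ for which
\[
W := L_{a_{\tau(1)}}\ot\cdots\ot L_{a_{\tau(r)}}
\]
is cyclic on $\mathbf w := w_{\tau(1)}\ot\cdots\ot w_{\tau(r)}$, where $w_i$ denotes the $\ell$-highest weight vector of $L_{a_i}$. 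Thus $W$ is an $\ell$-highest weight module of $\ell$-highest weight $\bfQ:=\bfQ_{1,a_1}\cdots\bfQ_{1,a_r}$.

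The key step, which I expect to be the main obstacle, is to show that $W$ is indecomposable. From the character formula \eqref{ch(L(Qia))}, each factor $L_{a_i}=L(\bfQ_{1,a_i})$ has a one-dimensional $\bse_1$-weight space spanned by $w_i$, so the $(r,0,\ldots,0)$-weight space of $W$ equals $\mbc\mathbf w$. Since $W_\bfQ\han W_{(r,0,\ldots,0)}$ and $\mathbf w\in W_\bfQ$, we obtain $W_\bfQ=\mbc\mathbf w$. If $W=W_1\oplus W_2$ is any decomposition, invariance of the summands under the commuting operators $\ms Q_{j,s}$ and $\ttk_j$ splits $W_\bfQ=(W_1)_\bfQ\oplus(W_2)_\bfQ$; one-dimensionality then forces $\mathbf w$ to lie in a single summand, whence $W=\afUglv\mathbf w$ is contained in that summand and the other vanishes.

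With $W$ indecomposable, the standard block-theoretic fact that all composition factors of an indecomposable object lie in a single block yields a block $\sC$ of $\hFn$ containing every composition factor of $W$. By Lemma \ref{commutative}, the modules $V$, $V'$ and $W$ share the same composition factors, so all composition factors of $V$ and $V'$ also lie in $\sC$. Decomposing $V=\bigoplus_iV_i$ and $V'=\bigoplus_jV_j'$ into indecomposables, each summand lies in the block of any of its composition factors, hence in $\sC$. This proves $V\sim_n V'$; viewing $W$, $V$ and $V'$ as $\afSrv$-modules via Lemma \ref{tensor of simple module} and repeating the argument inside $\hFnr$ gives $V\sim_{n,r}V'$.
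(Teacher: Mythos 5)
Your proof is correct and follows essentially the same route as the paper: combine Lemma \ref{tensor of simple module}, Lemma \ref{cyclic}, and Lemma \ref{commutative}. The one thing you add is a careful justification of the step ``cyclic on the $\ell$-highest weight vector implies indecomposable'' (via the one-dimensionality of $W_\bfQ$ and its stability under direct sum decompositions), which the paper's proof asserts without explanation; this is a genuine gap in the paper's exposition that your argument correctly fills.
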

\begin{proof}
According to Lemma \ref{tensor of simple module}, for any $\sg\in\fSr$, $L_{a_{\sg(1)}}\ot\cdots\ot L_{a_{\sg(r)}}$ can be regarded as an $\afSrv$-module via $\zeta_r$. Thus by Lemma \ref{cyclic}, we conclude that there exist a permutation $\pi\in\fSr$ such that $L_{a_{\pi(1)}}\ot\cdots L_{a_{\pi(r)}}$ is an indecomposable $\afSrv$-module. Now the assertion follows from Lemma \ref{commutative}.
\end{proof}

\begin{Prop}\label{Q=Q'}
Let $\bfQ\in\Qnpr$ and $\bfQ'\in\Qnpt$. Assume that $\ol{\bfQ}=\ol{\bfQ'}$. Then
$r=t$, $L(\bfQ)\sim_n L(\bfQ')$ and $L(\bfQ)\sim_{n,r} L(\bfQ')$.
\end{Prop}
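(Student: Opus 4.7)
First, $r=t$ together with the stronger identity $\prod_{i=1}^n Q_i(u) = \prod_{i=1}^n Q'_i(u)$ is immediate from Proposition \ref{vartheta iso}(3); this disposes of the first assertion and supplies the polynomial identity that drives the rest.

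The plan for the two linking statements is as follows. Use Lemma \ref{Decomposition of L(bfQ)} to choose $a_1,\cdots,a_r\in\mbc^*$ with $L(\bfQ)$ a quotient of $M := L_{a_1}\ot\cdots\ot L_{a_r}$, and $b_1,\cdots,b_r\in\mbc^*$ with $L(\bfQ')$ a quotient of $M' := L_{b_1}\ot\cdots\ot L_{b_r}$; by Lemma \ref{tensor of simple module} both $M$ and $M'$ are $\afSrv$-modules. The key intermediate claim I would prove is $\{a_1,\cdots,a_r\}=\{b_1,\cdots,b_r\}$ as multisets. Combining \cite[Lem.~4.1]{FM} with Proposition \ref{l-weight for l highest weight module} (applied to each $L_{a_i}=L(\bfQ_{1,a_i})$) gives $\wtl(M)\subseteq\bigl(\prod_i\bfQ_{1,a_i}\bigr)\Rnm$. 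Because $L(\bfQ)$ is a quotient of $M$, its $\ell$-highest weight $\bfQ$ lies in $\wtl(M)$, forcing $\ol{\bfQ}=\ol{\prod_i\bfQ_{1,a_i}}$ in $\Xn/\Rn$. By the computation \eqref{barQ}, this reads $\vartheta(\prod_i Q_i(u)) = \vartheta(\prod_i (1-a_iu))$, and the injectivity of $\vartheta$ from Proposition \ref{vartheta iso}(1) gives $\prod_i (1-a_iu)=\prod_i Q_i(u)$. Hence $\{a_i\}$ is precisely the multiset of roots of $\prod_i Q_i(u)$; the identical argument identifies $\{b_j\}$ as the root multiset of $\prod_i Q'_i(u)$, and the two polynomials agree by the first step.

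With $\{a_i\}=\{b_j\}$ in hand, pick $\sigma\in\fSr$ with $b_i=a_{\sigma(i)}$, so $M'=L_{a_{\sigma(1)}}\ot\cdots\ot L_{a_{\sigma(r)}}$. Lemma \ref{link} then gives $M\sim_n M'$ and $M\sim_{n,r} M'$, placing every indecomposable summand of $M$ and of $M'$ in a single common block of $\hFn$, respectively of the category of finite dimensional $\afSrv$-modules. Since $L(\bfQ)$ and $L(\bfQ')$ are composition factors of $M$ and $M'$ respectively, they both sit in this common block, yielding $L(\bfQ)\sim_n L(\bfQ')$ and $L(\bfQ)\sim_{n,r} L(\bfQ')$.

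The main obstacle I anticipate is the multiset identification $\{a_i\}=\{b_j\}$: Lemma \ref{Decomposition of L(bfQ)} only asserts the existence of such $a_i$ without any intrinsic characterisation, and I see no route other than the elliptic-character argument above, via the injectivity of $\vartheta$ and the $\ell$-weight containment in $(\prod_i\bfQ_{1,a_i})\Rnm$, to pin $\{a_i\}$ down uniquely in terms of $\bfQ$.
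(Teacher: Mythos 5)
Your proof is correct, and it reaches the key multiset identity $\{a_1,\dots,a_r\}=\{b_1,\dots,b_r\}$ by a genuinely different route than the paper. The paper first invokes Lemma~\ref{cyclic} and Lemma~\ref{commutative} to arrange that $M=L_{a_1}\ot\cdots\ot L_{a_r}$ is indecomposable, and then applies Proposition~\ref{indecomposable module} (whose proof rests on the Ext-vanishing Lemma~\ref{extension between different chi}) to conclude that all composition factors of $M$ share a single elliptic character, equal to $\prod_i\ol{\La_{1,a_i}}$; since $L(\bfQ)$ is among them, $\ol\bfQ=\prod_i\ol{\La_{1,a_i}}$. You instead bypass indecomposability and Proposition~\ref{indecomposable module} entirely: the containment $\wtl(M)\subseteq\bigl(\prod_i\bfQ_{1,a_i}\bigr)\Rnm$ from \cite[Lem.~4.1]{FM} and Corollary~\ref{l-weight for L(bfQ)}, together with $\bfQ\in\wtl(L(\bfQ))\subseteq\wtl(M)$, gives the same conclusion directly. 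In effect you are redoing, at this specific tensor product, the computation that underlies Lemma~\ref{Weyl modules belonging to hFnrQ}(2). Your argument also yields the sharper fact that $\{a_i\}$ must be the root multiset of $\prod_i Q_i(u)$, which the paper never makes explicit. Both proofs ultimately call Lemma~\ref{link}, and hence Lemma~\ref{cyclic}, for the final linking step, so the dependence on the cyclicity result is not eliminated; but for the multiset step your version is more direct and needs less of the block machinery. One minor point worth noting in a write-up: when you invoke Lemma~\ref{link} to say that $M\sim_n M'$ places all composition factors of $M$ and $M'$ in a common block, you are (correctly) reading into that lemma what its proof actually establishes via Lemma~\ref{cyclic}, namely that some permutation of $M$ is indecomposable; it would be cleaner to say this explicitly.
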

\begin{proof}
By Lemma \ref{Decomposition of L(bfQ)}, Lemma \ref{cyclic} and Lemma \ref{commutative}  we conclude that there exist $a_1,\cdots,a_r\in\mbc^*$ such that
$L(\bfQ)$ is the composition factor of $L_{a_1}\ot\cdots \ot L_{a_r}$ and $L_{a_1}\ot\cdots \ot L_{a_r}$ is indecomposable. Similarly, we choose $b_1,\cdots,b_t\in\mbc^*$ such that
$L(\bfQ')$ is the composition factor of $L_{b_1}\ot\cdots \ot L_{b_t}$ and $L_{b_1}\ot\cdots \ot L_{b_t}$ is indecomposable. From Proposition
\ref{indecomposable module} we see that $$\ol{\La_{1,a_1}}\cdots\ol{\La_{1,{a_r}}}=\ol{\bfQ}=\ol{\bfQ'}
=\ol{\La_{1,b_1}}\cdots\ol{\La_{1,{b_t}}}.$$
It follows from Proposition \ref{vartheta iso}(1) that $r=t$ and there exists $\sg\in\fSr$ such that $(b_1,\cdots,b_r)=(a_{\sg(1)},\cdots,a_{\sg (r)})$. Now the assertion follows from Lemma \ref{link}.
\end{proof}

\subsection{}
Recall from \eqref{Xinr} the definition of $\Xi_n$ and $\Xi_{n,r}$.
We are now prepared to prove the main result of this paper.
\begin{Thm}\label{block}
We have
$$\hFn=\bop_{\chi\in\Xi_n}\hFnchi,\quad\hFnr=\bop_{\chi\in\Xi_{n,r}}\hFnrchi.$$
Furthermore each $\hFnchi$ is a block of $\hFn$ for $\chi\in\Xi_n$ and each
$\hFnrchi$ is a block of $\hFnr$ for $\chi\in\Xi_{n,r}$.
\end{Thm}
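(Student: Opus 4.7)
The plan is to deduce Theorem \ref{block} formally from the three main preparatory results: Proposition \ref{indecomposable module}, Proposition \ref{Q=Q'}, and Lemma \ref{extension between different chi}. The two statements (one for $\hFn$, one for $\hFnr$) are established in parallel; Corollary \ref{finite dimensional afSrv modules} lets the $\hFnr$-case ride on the $\hFn$-case, the only extra point being that $\hFnr$ is closed under subquotients and extensions inside $\hFn$ (which follows from the characterization that $V\in\hFn$ lies in $\hFnr$ iff all weights $\la$ with $V_\la\ne 0$ satisfy $|\la|=r$, and weights are additive in short exact sequences).

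First I would set up the decomposition. Every $V\in\hFn$ has finite length, so it decomposes into a direct sum of indecomposables; by Proposition \ref{indecomposable module} each indecomposable summand lies in $\hFnQ$ for some $\bfQ\in\Qnp$, hence in $\hFnchi$ with $\chi=\ol{\bfQ}\in\Xi_n$. Collecting summands by elliptic character produces $V=\bigoplus_{\chi\in\Xi_n}V_\chi$ with $V_\chi\in\hFnchi$. Directness of the sum is immediate from the definition, since a nonzero module cannot simultaneously have every composition factor of elliptic character $\chi$ and every composition factor of elliptic character $\chi'\ne\chi$. For the restriction to $\hFnr$, Theorem \ref{classification of simple afSrv-modules} identifies the simples of $\hFnr$ with $\{L(\bfQ)\mid\bfQ\in\Qnpr\}$, and by Proposition \ref{vartheta iso}(2) their elliptic characters exhaust $\Xi_{n,r}$, so the induced decomposition is $\hFnr=\bigoplus_{\chi\in\Xi_{n,r}}\hFnrchi$.

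Next I would verify that each $\hFnchi$ is closed under extensions (it is evidently closed under subquotients and direct sums from its definition in terms of composition factors), which ensures $\hFnchi$ is a genuine abelian subcategory. Given the direct sum decomposition, this immediately rules out linking between any two indecomposables lying in distinct $\hFnchi,\hFn_{n,\chi'}$: the splitting $\hFn=\hFnchi\oplus\bigoplus_{\chi'\ne\chi}\hFn_{n,\chi'}$ itself witnesses a separation. The same observation applies verbatim inside $\hFnr$.

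Finally I would show that any two indecomposables $V,V'\in\hFnchi$ are linked. The crucial reduction is that every indecomposable is linked to each of its composition factors: if $\hFnchi=\sC_1\oplus\sC_2$ splits abelianly with $V\in\sC_1$, then every subquotient of $V$ lies in $\sC_1$, so in particular every composition factor does. Hence it suffices to link any two simples $L(\bfQ),L(\bfQ')\in\hFnchi$, which is precisely Proposition \ref{Q=Q'}. The same argument works in $\hFnr$, since Proposition \ref{Q=Q'} delivers both $L(\bfQ)\sim_n L(\bfQ')$ and $L(\bfQ)\sim_{n,r}L(\bfQ')$. I do not anticipate any real obstacle: all of the technical content has been absorbed into Propositions \ref{indecomposable module} and \ref{Q=Q'}, and the proof of the theorem itself is a clean formal assembly. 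The one small subtlety to watch is making the closure of $\hFnr$ under extensions inside $\hFn$ explicit, so that the $\hFnr$-version of the decomposition and blockhood is not merely asserted.
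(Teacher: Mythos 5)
Your proposal is correct and follows essentially the same route as the paper: Proposition \ref{indecomposable module} gives the decomposition of each module over $\Xi_n$ (resp.\ $\Xi_{n,r}$), and Proposition \ref{Q=Q'} links all simples with a common elliptic character. The only difference is packaging: where you unpack the formal reduction (each indecomposable is linked to its composition factors, and $\hFnchi$ is closed under extensions and subquotients, so no linking crosses distinct characters), the paper delegates precisely this categorical bookkeeping to the cited general lemma \cite[Lem.\,2.5]{CM04}. Your added observation that $\hFnr$ is closed under subquotients and extensions inside $\hFn$ is a sensible point to make explicit; it is implicit in the paper via Corollary \ref{finite dimensional afSrv modules} and the fact that Proposition \ref{Q=Q'} already delivers the $\sim_{n,r}$-linking directly.
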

\begin{proof}
From Proposition \ref{indecomposable module} and \cite[Lem. 2.5]{CM04} we see that
each block of $\hFn$ is contained in $\hFnchi$ for some $\chi\in\Xi_n$ and
each block of $\hFnr$ is contained in $\hFnrchi$ for some $\chi\in\Xi_{n,r}$. Now the assertion follows from Proposition \ref{Q=Q'}.
\end{proof}

Combining Proposition  \ref{vartheta iso} with Theorem \ref{block} yields the following result.
\begin{Thm}\label{equivalent condition}
Let $\bfQ=(Q_1(u),\cdots,Q_n(u))\in\Qnpr$ and $\bfQ'=(Q_1'(u),\cdots,Q_n'(u))\in\Qnpt$. Then the following are equivalent:

(i) $L(\bfQ),L(\bfQ')$ belong to the same block of $\hFn$;

(ii) $r=t$, and  $L(\bfQ),L(\bfQ')$ belong to the same block of $\hFnr$;

(iii) $\ol{\bfQ}=\ol{\bfQ'}$;

(iv) $r=t$ and $\prod_{1\leq i\leq n}Q_i(u)=\prod_{1\leq i\leq n}Q_i'(u)$.
\end{Thm}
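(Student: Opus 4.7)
The plan is to derive all four equivalences by stringing together Theorem \ref{block} (block decomposition) with Proposition \ref{vartheta iso}(3) (concrete description of the equivalence $\ol{\bfQ}=\ol{\bfQ'}$). The cleanest cycle to establish is (iii)$\Leftrightarrow$(iv), (i)$\Rightarrow$(iii), (iii)$\Rightarrow$(ii), and (ii)$\Rightarrow$(i).

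First I would observe that (iii)$\Leftrightarrow$(iv) is exactly the content of Proposition \ref{vartheta iso}(3); in particular, if $\ol{\bfQ}=\ol{\bfQ'}$ then automatically $r=t$, since the degree is detected by $\prod_i Q_i(u)$.

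Next, to prove (i)$\Rightarrow$(iii): by Theorem \ref{block}, each $L(\bfQ)$ lies in the unique summand $\hFnchi$ indexed by its elliptic character $\chi_{\bfQ}=\ol{\bfQ}\in\Xi_n$, and distinct blocks give a direct sum decomposition of $\hFn$; thus if $L(\bfQ)$ and $L(\bfQ')$ are linked in $\hFn$ they must share the same elliptic character, yielding $\ol{\bfQ}=\ol{\bfQ'}$. For (iii)$\Rightarrow$(ii): given $\ol{\bfQ}=\ol{\bfQ'}$, the equivalence (iii)$\Leftrightarrow$(iv) just proved forces $r=t$, so both $L(\bfQ),L(\bfQ')$ lie in $\hFnr$ with the same elliptic character $\chi\in\Xi_{n,r}$; by Theorem \ref{block} applied to $\hFnr$, they lie in the common block $\hFnrchi$. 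Finally (ii)$\Rightarrow$(i) is formal: any block decomposition $\hFn=\bigoplus_{\alpha} \hFn_\alpha$ restricts to one of $\hFnr$ (via Corollary \ref{finite dimensional afSrv modules}), so two objects linked in $\hFnr$ are a fortiori linked in $\hFn$.

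Since every arrow reduces to a previously established statement, I do not anticipate a genuine obstacle. The only point requiring a brief justification is the last implication (ii)$\Rightarrow$(i), namely that blocks in $\hFnr$ are contained in blocks of $\hFn$; this follows because $\hFnr$ is a full subcategory of $\hFn$ closed under extensions (Lemma \ref{belong to hFn}), so the linking relation restricted to $\hFnr$ refines the one on $\hFn$. With that in hand the four implications close the cycle and the theorem follows.
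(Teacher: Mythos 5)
Your proof is correct and follows essentially the same route as the paper, whose entire proof is the one-line observation that the theorem follows by combining Proposition \ref{vartheta iso} with Theorem \ref{block}. One cosmetic remark: for (ii)$\Rightarrow$(i) you do not need the categorical restriction argument (and Lemma \ref{belong to hFn} is not quite the relevant fact there --- what matters is that a $\afUglv$-direct summand of an $\afSrv$-module is again an $\afSrv$-module, since $\zeta_r$ is surjective); it is simpler to note that Theorem \ref{block} for $\hFnr$ already gives (ii)$\Rightarrow$(iii), and then (iii)$\Rightarrow$(i) follows from Theorem \ref{block} for $\hFn$.
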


\section{Blocks of affine Hecke algebras}

\subsection{}
The blocks of extended affine Hecke algebras of type $A$ are known (see \cite[Th. 7]{Muller}, \cite[III.9]{BG} and \cite[Th. 2.15]{LM}).
We will use Theorem \ref{equivalent condition} to give a different approach to the classification of the blocks for the affine Hecke algebra $\afHrv$.

A {\it segment} $\sfs$ with center $a\in\mbc^*$ is by definition an
ordered sequence
$$\sfs=(a\ttv^{-k+1},a\ttv^{-k+3},\ldots,a\ttv^{k-1})\in(\mbc^*)^k.$$
 Here $k$ is called the length
of the segment, denoted by $|\sfs|$. If
$\bfs=\{\sfs_1,\ldots,\sfs_p\}$ is an unordered collection of
segments, let $|\bfs|=|\sfs_1|+\ldots+|\sfs_p|$. We also call $|\bfs|$ the
length of $\bfs$.
Let $\mathscr S_r$ be the set of unordered collections of segments
$\bfs$ with $|\bfs|=r$.

If $w=s_{i_1}s_{i_2}\cdots s_{i_m}\in\fSr$ is reduced let $T_w=T_{i_1}T_{i_2}\cdots T_{i_m}$. Let $\Hrv$ be the subalgebra of $\afHrv$ generated by $T_i$ for $1\leq i\leq r-1$.
For $\mu\in\La(p,r)$ let $\frak S_\mu$ be the corresponding standard Young subgroup of  $\frak S_r$ and let $y_{\mu}=\sum_{w\in\fS_\mu}(-\ttv^2)^{-\ell(w)}T_w\in\Hrv$.
Then,
\begin{equation*}\label{signed permutation module}
\Hrv y_\mu\cong E_\mu\oplus(\bigoplus_{\nu\vdash r,
\nu\rhd\la}m_{\nu,\mu}E_\nu),
\end{equation*}
where $E_\nu$ is the left cell module defined by the
Kazhdan--Lusztig's C-basis \cite{KL79} associated with the left cell
containing $w_{0,\nu}$ and $w_{0,\nu}$ is the longest element in $\fS_\nu$.

For
$\bfb=(b_1,\ldots,b_r)\in(\mbc^*)^r$, let $$M_{\bfb}=\afHrv/J_\bfb,$$
where $J_\bfb$ is the left ideal of $\afHrv$ generated by $X_j-b_j$
for $1\leq j\leq r$.
Given $\mu=(\mu_1,\cdots,\mu_p)\in\La(p,r)$ and $\bsa=(a_1,\cdots,a_p)\in(\mbc^*)^p$ with $\mu_i\geq 1$ for all $i$, let $\bfb=(\sfs_1,\cdots,\sfs_p)\in(\mbc^*)^r$, where $\sfs_i=(a_i\ttv^{-\mu_i+1},a_i\ttv^{-\mu_i+3},
\ldots,a_i\ttv^{\mu_i-1})$. Let
$$I_{\mu,\bsa}=\afHrv\bar y_\mu\han M_\bfb $$
where $\bar y_\mu=y_\mu+J_\bfb\in M_{\bfb}$.
Note that $I_{\mu,\bsa}$ is isomorphic to $\Hrv y_\mu$ as an $\Hrv$-module.

Given $\bfs=\{\sfs_1,\ldots,\sfs_p\}\in\ms S_r$ with $\sfs_i=(a_i\ttv^{-\mu_i+1},a_i\ttv^{-\mu_i+3},\ldots,a_i\ttv^{\mu_i-1})\in(\mbc^*)^{\mu_i},$ let $\mu=(\mu_1,\cdots,\mu_p)\in\La(p,r)$ and let $\bsa=(a_1,\cdots,a_p)\in(\mbc^*)^p$.
Let $V_\bfs$ be the unique composition factor of the $\afHrv$-module
$I_{\mu,\bsa}$ such that the multiplicity of $E_{\mu}$ in
$V_\bfs$ as an $\Hrv$-module is nonzero.
The following classification theorem is due to
Zelevinsky \cite{Zelevinsky} and Rogawski \cite{Rogawski}.

\begin{Thm}\label{classification irr affine Hecke algebra}
The modules $ V_\bfs$ with $\bfs\in\mathscr S_r$ are all nonisomorphic finite dimensional
irreducible $\afHrv$-modules.
\end{Thm}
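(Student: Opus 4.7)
The plan is to establish the classification in three stages: (a) each $V_\bfs$ is well-defined and irreducible; (b) the assignment $\bfs \mapsto V_\bfs$ is injective; and (c) every finite-dimensional irreducible $\afHrv$-module arises as some $V_\bfs$.

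For (a), well-definedness follows directly from the $\Hrv$-decomposition $\Hrv y_\mu \cong E_\mu \oplus \bigoplus_{\nu \rhd \mu} m_{\nu,\mu} E_\nu$, which transfers to $I_{\mu,\bsa}$ via the identification $I_{\mu,\bsa} \cong \Hrv y_\mu$ as $\Hrv$-modules. Since $E_\mu$ appears with multiplicity exactly one, additivity of Jordan--H\"older multiplicity on restriction forces exactly one composition factor of $I_{\mu,\bsa}$ (as an $\afHrv$-module) to contain $E_\mu$ in its $\Hrv$-restriction; this factor is $V_\bfs$, and it is irreducible by construction.

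For (b), suppose $V_\bfs \cong V_{\bfs'}$ with $\bfs, \bfs' \in \ms S_r$. The multisets of entries of the segments in $\bfs$ and $\bfs'$ determine the joint spectrum of $(X_1,\ldots,X_r)$ on $V_\bfs$ up to $\fSr$-action, and hence the central character through the Bernstein center $\mbc[X_1^{\pm 1},\ldots,X_r^{\pm 1}]^{\fSr}$. If these multisets differ, the central characters differ, contradicting isomorphism. If the multisets coincide but $\bfs \ne \bfs'$, the associated compositions $\mu, \mu'$ differ; but multiplicity-one containment of $E_\mu$ in $V_\bfs|_{\Hrv}$ versus $E_{\mu'}$ in $V_{\bfs'}|_{\Hrv}$ then rules out isomorphism.

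For (c), any finite-dimensional irreducible $\afHrv$-module $V$ admits a joint $X$-eigenvector (the $X_i$ generate a commutative subalgebra), so $V$ is a quotient of $M_\bfb = \afHrv/J_\bfb$ for some $\bfb \in (\mbc^*)^r$. One then reduces to analyzing the composition factors of $M_\bfb$: group the entries of $\bfb$ (up to $\fSr$-action) into maximal $v$-chains and identify each composition factor with a valid multiset-of-segments refinement. The main obstacle lies precisely here: establishing the bijection between composition factors of $M_\bfb$ and multisets of segments whose concatenation is $\bfb$ up to permutation. This is the combinatorial heart of Zelevinsky's theorem and typically proceeds by induction on $r$ using the Bernstein--Zelevinsky parabolic restriction functors combined with an analysis of intertwining operators that forces ``non-segment'' groupings to collapse into identifications already accounted for by coarser segment decompositions. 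Alternative approaches, such as Rogawski's explicit intertwiner computations or a geometric realization via Kazhdan--Lusztig correspondences, carry comparable technical overhead.
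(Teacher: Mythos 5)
The paper does not actually prove this theorem. It is stated and explicitly attributed to Zelevinsky \cite{Zelevinsky} and Rogawski \cite{Rogawski}, and the surrounding text merely sets up the notation ($\ms S_r$, the modules $M_\bfb$, $I_{\mu,\bsa}$, and the definition of $V_\bfs$ via the cell-module decomposition of $\Hrv y_\mu$) so that the result can be quoted. There is no in-paper argument for you to match against, so the right comparison is between your sketch and the classical literature.

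Your outline captures the correct architecture and, to your credit, you explicitly flag that stage (c) — surjectivity, i.e.\ that every composition factor of $M_\bfb$ is a $V_\bfs$ for a unique segment decomposition of $\bfb$ up to permutation — is the genuine combinatorial core of Zelevinsky's theorem and is not actually carried out. That is exactly where the work lives, and no amount of formal manipulation with $J_\bfb$ or the Bernstein center replaces the analysis of intertwiners / parabolic induction that Zelevinsky and Rogawski perform. Stage (a) is fine given the multiplicity-one occurrence of $E_\mu$ in $\Hrv y_\mu$ and additivity of Jordan--H\"older multiplicities under restriction. Stage (b), however, has a real gap in the case where the multisets of entries agree but $\bfs \neq \bfs'$: knowing that $E_\mu$ occurs in $V_\bfs|_{\Hrv}$ and $E_{\mu'}$ occurs in $V_{\bfs'}|_{\Hrv}$ does not by itself rule out $V_\bfs \cong V_{\bfs'}$, since a single irreducible $\afHrv$-module can restrict to a sum containing many distinct $E_\nu$'s. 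One needs a directional argument — e.g.\ that $E_\mu$ cannot occur in $I_{\mu',\bsa'}$ at all when $\mu$ is not $\unrhd \mu'$ in the appropriate dominance order, combined with a careful comparison when the sorted partitions of $\mu$ and $\mu'$ coincide — which again pushes you back into the segment-combinatorics of \cite{Zelevinsky}. In short: your plan is sound as a roadmap, but parts (b) and (c) are not proofs; the paper sidesteps this entirely by citing the result, which is the appropriate choice given that reproving it would be a paper in itself.
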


\subsection{}
The $\afHrv$-modules $I_{\mu,\bsa}$ and the Weyl modules $W(\bfQ)$ for the affine quantum Schur algebra $\afSrv$ can be related by a functor $\sfF$, which we now describe.
Recall from \S6 that $\hFnr$ is the category of finite dimensional $\afSrv$-modules. Let $\Cr$  be the category of finite dimensional $\afHrv$-modules. Using the $\afSrv$-$\afHrv$-bimodule $\Ogv^{\ot r}$, we define a functor \begin{equation*}\label{functor sF}
\sfF:\Cr\lra\hFnr,\;V\longmapsto\Ogv^{\ot
r}\ot_{\afHrv}V.
\end{equation*}
If $n\geq r$ then the functor $\sfF$ is an equivalence of categories
(see \cite[Th. 4.2]{CP96} and \cite[Th. 4.1.3]{DDF}).
According to \cite[Prop. 4.6]{Fu12} we have the following results.
\begin{Lem}\label{prop of Schur functor}
Let $\mu=(\mu_1,\cdots,\mu_p)\in\La(p,r)$ and $\bsa=(a_1,\cdots,a_p)\in(\mbc^*)^p$ with $\mu_i\geq 1$ for all $i$. Then
$\sfF(I_{\mu,\bsa})$ is isomorphic to $L(\bfQ_{\mu_1,a_1})\ot\cdots\ot L(\bfQ_{\mu_p,a_p})$ if $\mu_i\leq n$ for all $i$, and it is zero, otherwise.
\end{Lem}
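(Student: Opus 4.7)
The plan is to exploit the two-step structure of $I_{\mu,\bsa}$: it is the $\afHrv$-submodule of the induced module $M_\bfb$ generated by $\bar y_\mu$. So I would apply $\sfF$ in two corresponding stages. First, compute $\sfF(M_\bfb)$ as an $r$-fold tensor product of natural ($n$-dimensional) evaluation modules for $\afUglv$. Second, show that the image $\sfF(I_{\mu,\bsa})\hookrightarrow\sfF(M_\bfb)$ is obtained from $\sfF(M_\bfb)$ by block-diagonal $\ttv$-antisymmetrization, which on each block extracts the desired fundamental evaluation module $L(\bfQ_{\mu_i,a_i})$.

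For the first step, write $M_\bfb=\afHrv\ot_A\mbc_\bfb$ with $A=\mbc[X_1^{\pm1},\ldots,X_r^{\pm1}]$ and $\mbc_\bfb$ the one-dimensional $A$-module on which $X_j$ acts by $b_j$; then the projection formula gives $\sfF(M_\bfb)=\Ogv^{\ot r}\ot_A\mbc_\bfb$. Since $X_t$ acts on $\Ogv^{\ot r}$ only in the $t$-th factor (from the formula for $X_t^{-1}$, the action is the shift $\og_{i_t}\mapsto\og_{i_t-n}$ on that slot), this quotient factors as $(\Ogv/(X-b_1))\ot\cdots\ot(\Ogv/(X-b_r))$. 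Each factor $\Ogv/(X-b_j)$ is the $n$-dimensional natural evaluation $\afUglv$-module at parameter $b_j$ (up to an explicit $\ttv$-shift), and is identified with $L(\bfQ_{1,b_j'})$ for the appropriate $b_j'$.

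For the second step, $\sfF(I_{\mu,\bsa})$ is the image of right multiplication by $\bar y_\mu$ on $\sfF(M_\bfb)$. The element $y_\mu=\sum_{w\in\fS_\mu}(-\ttv^2)^{-\ell(w)}T_w$ factors as $y_{\mu_1}\cdots y_{\mu_p}$, each $y_{\mu_i}$ lying in the Hecke subalgebra of its block of $\mu_i$ consecutive positions. On the $i$-th block, whose parameters $b_{\mu_1+\cdots+\mu_{i-1}+j}=a_i\ttv^{2j-\mu_i-1}$ ($1\leq j\leq\mu_i$) run exactly through the segment $\sfs_i$, $y_{\mu_i}$ acts as a quantum antisymmetrizer and cuts out the $\ttv$-wedge of these $\mu_i$ natural evaluations. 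By the character formula \eqref{ch(L(Qia))} this $\ttv$-wedge is precisely $L(\bfQ_{\mu_i,a_i})$, since its parameters $a_i\ttv^{\mu_i-1},a_i\ttv^{\mu_i-3},\ldots,a_i\ttv^{1-\mu_i}$ are exactly the points of $\sfs_i$. Assembling the blocks yields the asserted tensor product. If some $\mu_i>n$, the $\ttv$-antisymmetrizer annihilates the block (no strictly increasing sequence of length $\mu_i$ exists in $\{1,\ldots,n\}$), and hence $\sfF(I_{\mu,\bsa})=0$.

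The main obstacle is the bookkeeping in the second step: one must track how the natural-evaluation parameters $b_j$ combine with the $\ttv$-shifts inherent in the Hecke action to produce the segment centers, and verify that the block-wise $\ttv$-wedge agrees as a $\afUglv$-module with $L(\bfQ_{\mu_i,a_i})$ on the nose (not merely on characters, but also at the level of the Drinfeld series $\ms Q_i^\pm(u)$). Once these identifications are made block by block, they assemble into the external tensor product, as carried out in detail in \cite[Prop. 4.6]{Fu12}.
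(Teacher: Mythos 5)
The paper offers no proof of this lemma---it is quoted from \cite[Prop.~4.6]{Fu12}---so there is nothing internal to compare against. Your outline is certainly the natural route, and your first step is correct: $\sfF(M_\bfb)\cong\Ogv^{\ot r}\ot_A\mbc_\bfb$ splits slot by slot into $n$-dimensional evaluation modules because each $X_t$ acts only in the $t$-th tensor factor.

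The genuine gap is in the second step, and it sits exactly where the lemma has nontrivial content. The functor $\sfF=\Ogv^{\ot r}\ot_{\afHrv}(-)$ is only right exact, so the inclusion $I_{\mu,\bsa}\han M_\bfb$ does not give an embedding $\sfF(I_{\mu,\bsa})\hookrightarrow\sfF(M_\bfb)$; it only identifies the \emph{image} of the map $\sfF(I_{\mu,\bsa})\ra\sfF(M_\bfb)$ with the block-wise $\ttv$-antisymmetrized subspace. Your computation therefore exhibits $L(\bfQ_{\mu_1,a_1})\ot\cdots\ot L(\bfQ_{\mu_p,a_p})$ only as a \emph{quotient} of $\sfF(I_{\mu,\bsa})$. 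When $n\geq r$ one can restore exactness via the equivalence of categories, but the vanishing statement requires some $\mu_i>n$, hence $n<r$, and there the argument as written proves nothing: the image being zero does not force $\sfF(I_{\mu,\bsa})=0$. The standard repair is to present $I_{\mu,\bsa}$ from above rather than embed it: $\bar y_\mu$ transforms by a one-dimensional character of the parabolic subalgebra $\sH_\vtg(\mu_1)_v\ot\cdots\ot\sH_\vtg(\mu_p)_v$ (sign on the $T_j$'s, the segment values on the $X_j$'s), so $I_{\mu,\bsa}$ is a quotient of the induced module $\afHrv\ot_{\sH_\vtg(\mu_1)_v\ot\cdots\ot\sH_\vtg(\mu_p)_v}\mbc_{\mu,\bsa}$ (in fact isomorphic to it, by comparing dimensions with $\Hrv y_\mu$). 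Applying $\sfF$ to this induced module and using transitivity of the tensor product yields $\bigotimes_{1\leq i\leq p}\bigl(\Ogv^{\ot\mu_i}\ot_{\sH_\vtg(\mu_i)_v}\mbc_{\mu_i,a_i}\bigr)$, i.e.\ exactly the $\ttv$-wedges you describe, each of dimension $\binom{n}{\mu_i}$; this gives the vanishing for $\mu_i>n$ unconditionally, and combined with your right-exactness lower bound it gives the isomorphism when all $\mu_i\leq n$. The remaining identification of each $\ttv$-wedge with $L(\bfQ_{\mu_i,a_i})$ as a $\afUglv$-module is, as you say, bookkeeping: it is a cyclic $\ell$-highest weight module with $\ell$-highest weight $\bfQ_{\mu_i,a_i}$ whose dimension already equals $\dim L(\bfQ_{\mu_i,a_i})$ by \eqref{ch(L(Qia))}.
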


Combining Proposition \ref{Decomposition of Weyl modules for affine gln} with Lemma \ref{prop of Schur functor} yields the following result.

\begin{Coro}\label{Verma module and Weyl module}
For $\bfQ\in\Qnpr$ there exist $1\leq\mu_1,\cdots,\mu_p\leq n$ and $a_1,\cdots,a_p\in\mbc^*$ such that $W(\bfQ)\cong \sfF(I_{\mu,\bsa})$, where
$\mu=(\mu_1,\cdots,\mu_p)$ and $\bsa=(a_1,\cdots,a_p)$.
\end{Coro}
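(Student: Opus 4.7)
The plan is to chain the two cited results together after translating the $\Det$-factors into the $L(\bfQ_{i,a})$ notation that Lemma \ref{prop of Schur functor} requires.

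First, I would apply Proposition \ref{Decomposition of Weyl modules for affine gln}(2) to the given $\bfQ\in\Qnpr$, obtaining indices $1\leq i_1,\ldots,i_k\leq n-1$ and scalars $a_1,\ldots,a_k,b_1,\ldots,b_l\in\mbc^*$ with
\[
W(\bfQ)\;\cong\;L(\bfQ_{i_1,a_1})\ot\cdots\ot L(\bfQ_{i_k,a_k})\ot\Det_{b_1}\ot\cdots\ot\Det_{b_l}.
\]
By the very definition $\Det_{b}=L(\bfQ_{n,bv^{n-1}})$, so each determinant factor is of the form $L(\bfQ_{n,c_j})$ with $c_j=b_jv^{n-1}\in\mbc^*$. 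Thus every tensorand has the shape $L(\bfQ_{\mu_j,a_j})$ with $1\leq\mu_j\leq n$.

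Next I would set $p=k+l$, $\mu=(i_1,\ldots,i_k,n,\ldots,n)\in\La(p,r)$ (with $l$ trailing $n$'s) and $\bsa=(a_1,\ldots,a_k,c_1,\ldots,c_l)\in(\mbc^*)^p$. Since $1\leq\mu_j\leq n$ for every $j$, Lemma \ref{prop of Schur functor} applies and gives
\[
\sfF(I_{\mu,\bsa})\;\cong\;L(\bfQ_{\mu_1,a_1})\ot\cdots\ot L(\bfQ_{\mu_p,a_p}),
\]
which by the previous paragraph is isomorphic to $W(\bfQ)$. Combining these two isomorphisms yields the stated conclusion. (That the tensor product really lies in $\Lanr$, i.e.\ that $|\mu|=r$, follows from the fact that $W(\bfQ)$ is an $\afSrv$-module by Corollary \ref{Weyl module for affine q-Schur algebras}.)

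There is essentially no obstacle here; the statement is a bookkeeping consequence of the two cited results once one recognizes $\Det_b$ as an $L(\bfQ_{n,\cdot})$. The only mildly delicate point to check is that one may choose all $\mu_j\geq 1$ so that the hypothesis of Lemma \ref{prop of Schur functor} is satisfied, which is automatic from the form of the decomposition given by Proposition \ref{Decomposition of Weyl modules for affine gln}(2).
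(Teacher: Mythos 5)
Your proposal is correct and matches the paper's argument, which likewise obtains the corollary by combining Proposition \ref{Decomposition of Weyl modules for affine gln}(2) with Lemma \ref{prop of Schur functor}; you have simply made explicit the bookkeeping step of rewriting each $\Det_{b_j}$ as $L(\bfQ_{n,b_jv^{n-1}})$ and checking that $\mu\in\La(p,r)$.
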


\subsection{}
Assume $n>r$. For $\bfs=\{\sfs_1,\ldots,\sfs_p\}\in\ms S_r$ with
$$\sfs_i=(a_i\ttv^{-\mu_i+1},a_i\ttv^{-\mu_i+3},\ldots,a_i\ttv^{\mu_i-1})\in(\mbc^*)^{\mu_i},$$
define
\begin{equation}\label{partiall}
\bfQ_\bfs=(Q_1(u),\ldots,Q_n(u))
\end{equation}
by setting recursively
\begin{equation*}
Q_i(u)=\begin{cases} 1,\quad&\text{ if }i=n;\\
P_i(u\ttv^{-i+1})P_{i+1}(u\ttv^{-i+2})\cdots
P_{n-1}(u\ttv^{n-2i}),&\text{ if }n-1\geq i\geq1,\\\end{cases}\\
\end{equation*}
where  $P_{i}(u)=\prod_{1\leq j\leq p\atop \mu_j=i}(1-a_ju)$.

Given $\bfs=\{\sfs_1,\ldots,\sfs_p\}\in\mathscr S_{r}$, let
\begin{equation}\label{bfs(bfs)}
\bfa(\bfs)=(\sfs_1,\ldots,\sfs_p)\in(\mbc^*)^r
\end{equation}
be the $r$-tuple obtained by juxtaposing the segments in $\bfs$.
The symmetric group $\fSr$ acts
on the set $(\mbc^*)^r$ by place permutation:
\begin{equation*}\label{place permutation}
\bfb w=(b_{w(1)},\cdots,b_{w(r)}),\quad\text{
for $\bfb=(b_1,\cdots,b_r)\in (\mbc^*)^r$ and $w\in\fSr$.}
\end{equation*}

\begin{Thm}\label{block for affine Hecke algebras}
Let $\bfs,\bfs'\in\mathscr S_r$. Then $V_\bfs$ and $V_{\bfs'}$ belong to the same block of $\Cr$ if and only if there exists some $\sg\in\fSr$ such that
$\bfa(\bfs)\sg=\bfa(\bfs')$, where $\bfa(\bfs)$ and $\bfa(\bfs')$ are defined using \eqref{bfs(bfs)}.
\end{Thm}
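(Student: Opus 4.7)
\textbf{Proof plan for Theorem \ref{block for affine Hecke algebras}.}
The plan is to transport the block classification from the affine quantum Schur algebra side (Theorem \ref{equivalent condition}) across the Schur functor $\sfF$, exploiting that $\sfF$ is an equivalence of categories whenever $n\geq r$. Since the statement concerns only the intrinsic block structure of $\Cr$, I am free to choose $n$; I will fix any $n\geq r$ once and for all, so that $\sfF:\Cr\to\hFnr$ is an equivalence and every $\mu_i$ appearing in a segment collection of total length $r$ satisfies $\mu_i\leq n$. Under this equivalence, two indecomposable $\afHrv$-modules lie in the same block of $\Cr$ if and only if their images lie in the same block of $\hFnr$.

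Next I would identify which block of $\hFnr$ contains $\sfF(V_\bfs)$. Given $\bfs=\{\sfs_1,\dots,\sfs_p\}$ with $\sfs_j$ of length $\mu_j$ and center $a_j$, the module $V_\bfs$ is a composition factor of $I_{\mu,\bsa}$ by construction, and by Lemma \ref{prop of Schur functor} (using $\mu_i\leq n$) we have
$$\sfF(I_{\mu,\bsa})\cong L(\bfQ_{\mu_1,a_1})\ot\cdots\ot L(\bfQ_{\mu_p,a_p}).$$
Since $\sfF$ is exact, $\sfF(V_\bfs)$ is a composition factor of this tensor product, hence by Lemma \ref{Weyl modules belonging to hFnrQ}(2) lies in the block $\hFnrchi$ with $\chi=\ol{\bfQ_{\mu_1,a_1}\cdots\bfQ_{\mu_p,a_p}}$. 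Consequently, by Theorem \ref{equivalent condition}, $V_\bfs\sim V_{\bfs'}$ in $\Cr$ if and only if
$$\prod_{i=1}^n\big(\bfQ_{\mu_1,a_1}\cdots\bfQ_{\mu_p,a_p}\big)_i(u)=\prod_{i=1}^n\big(\bfQ_{\mu'_1,a'_1}\cdots\bfQ_{\mu'_{p'},a'_{p'}}\big)_i(u).$$

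The final step is a direct computation of $\prod_{i=1}^n Q_i(u)$ for $\bfQ=\bfQ_{\mu_1,a_1}\cdots\bfQ_{\mu_p,a_p}$. Solving the recursion defining $\bfQ_{k,a}$ shows that $\bfQ_{k,a}$ has $j$-th component $1-av^{k-2j+1}u$ for $j\leq k$ and $1$ otherwise, so
$$\prod_{i=1}^n(\bfQ_{k,a})_i(u)=\prod_{j=1}^k(1-av^{k-2j+1}u)=\prod_{b\in\sfs}(1-bu),$$
where $\sfs$ is the segment of length $k$ centered at $a$. Multiplying over $j=1,\dots,p$ gives
$$\prod_{i=1}^n Q_i(u)=\prod_{j=1}^p\prod_{b\in\sfs_j}(1-bu)=\prod_{b\in\bfa(\bfs)}(1-bu).$$
Therefore the equality in the displayed criterion holds iff $\bfa(\bfs)$ and $\bfa(\bfs')$ determine the same monic polynomial, i.e. the same multiset in $(\mbc^*)^r$, which is precisely the existence of $\sg\in\fSr$ with $\bfa(\bfs)\sg=\bfa(\bfs')$.

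The main obstacle I anticipate is bookkeeping rather than conceptual: one has to verify that the explicit recursion for $\bfQ_{k,a}$ behaves correctly under products in $\Qnp$ and to track the segment shifts $v^{k-2j+1}$ to match the paper's definition of $\sfs$. Everything else either follows from Theorem \ref{equivalent condition}, Lemmas \ref{Weyl modules belonging to hFnrQ} and \ref{prop of Schur functor}, or the general fact that an equivalence of abelian categories induces a bijection on blocks.
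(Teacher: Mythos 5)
Your proof is correct, and its skeleton — transport the block problem across the Schur functor $\sfF$, apply Theorem \ref{equivalent condition}, and compute the polynomial $\prod_i Q_i(u)$ — is the same as the paper's. Where you diverge is in how the elliptic character of $\sfF(V_\bfs)$ is identified. The paper first cites \cite[4.4.2]{DDF} to equate $\sfF(V_\bfs)$ with a specific irreducible $L(\bfQ_\bfs)$, with $\bfQ_\bfs$ given by the recursion \eqref{partiall}, and only then computes $\prod_i (\bfQ_\bfs)_i(u)$. You observe that one never needs the exact Drinfeld polynomial of $\sfF(V_\bfs)$, only its class in $\Xn/\Rn$: since $V_\bfs$ is a composition factor of $I_{\mu,\bsa}$, exactness of $\sfF$ and Lemma \ref{prop of Schur functor} place $\sfF(V_\bfs)$ inside $L(\bfQ_{\mu_1,a_1})\ot\cdots\ot L(\bfQ_{\mu_p,a_p})$, and Lemma \ref{Weyl modules belonging to hFnrQ}(2) pins down the block $\hFnrchi$ with $\chi=\ol{\bfQ_{\mu_1,a_1}\cdots\bfQ_{\mu_p,a_p}}$ directly. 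This sidesteps both the external reference \cite[4.4.2]{DDF} and the recursion \eqref{partiall}, replacing them with a closed form for the components of $\bfQ_{k,a}$ (which you compute correctly: $j$-th entry $1-av^{k-2j+1}u$ for $j\le k$). Your route is slightly more self-contained and more in the spirit of the block machinery already set up in the paper; the paper's route is shorter if one is willing to quote the DDF identification. A tiny further difference: you only need $n\ge r$ for the equivalence $\sfF$ (and indeed $\mu_i\le r\le n$ guarantees the nonvanishing in Lemma \ref{prop of Schur functor}), whereas the paper works with $n>r$; both suffice since the statement is intrinsic to $\Cr$.
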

\begin{proof}
Assume $\bfs=\{\sfs_1,\sfs_2,\cdots,\sfs_k\}$ and
$\bfs'=\{\sfs_1',\sfs_2',\cdots,\sfs_l'\}$ with
$$\sfs_i=(a_i\ttv^{-\mu_i+1},a_i\ttv^{-\mu_i+3},
\ldots,a_i\ttv^{\mu_i-1}),\quad
\sfs_j'=(a_j'\ttv^{-\mu_j'+1},a_j'\ttv^{-\mu_j'+3},
\ldots,a_j'\ttv^{\mu_j'-1}).$$
Choose $n$ such that $n>r$. Then by \cite[4.4.2]{DDF}  we conclude that $V_\bfs$ and $V_{\bfs'}$ belong to the same block of $\Cr$ if and only if $L(\bfQ_\bfs)$ and $L(\bfQ_{\bfs'})$ belong to the same block of $\hFnr$, where $\bfQ_\bfs=(Q_1(u),\cdots,Q_n(u))$ and $\bfQ_{\bfs'}=(Q_1'(u),\cdots,Q_n'(u))$ are defined using \eqref{partiall}. Furthermore, if we write  $\bfa(\bfs)=(b_1,\cdots,b_r)$ and $\bfa(\bfs')=(b_1',\cdots,b_r')$,
then
\begin{equation*}
\begin{split}
\prod_{1\leq i\leq n}Q_i(u)&=\prod_{1\leq i\leq k\atop 1\leq s\leq\mu_i}(1-a_iuv^{2s-1-\mu_i})=\prod_{1\leq s\leq r}(1-b_su),\\
\quad \prod_{1\leq i\leq n}Q_i'(u)&=\prod_{1\leq i\leq l\atop 1\leq s\leq\mu_i'}(1-a_i'uv^{2s-1-\mu_i'})=\prod_{1\leq s\leq r}(1-b_s'u).
\end{split}
\end{equation*}
Thus by Theorem \ref{equivalent condition} we conclude that
$V_\bfs$ and $V_{\bfs'}$ belong to the same block of $\Cr$ if and only if there exists some $\sg\in\fSr$ such that $(b_1',\cdots,b_r')=(b_{\sg(1)},\cdots,b_{\sg(r)})$. The proof is completed.
\end{proof}



\begin{thebibliography}{99}\frenchspacing

\bibitem{Ar}
S. Ariki, {\em Cyclotomic $q$-Schur algebras as quotients of quantum algebras}, J. Reine Angew. Math. {\bf 513} (1999), 53--69.

\bibitem{ATY}
S. Ariki, T. Terasoma, and H. Yamada, {\em Schur--Weyl reciprocity for the Hecke algebra of $(\mbz/r\mbz)\wr\frak S_n$}, J. Algebra {\bf 178} (1995), 374--390.

\bibitem{AK}
T. Akasaka and M. Kashiwara, {\em Finite-dimensional representations of quantum affine algebras. Publ. Res. Inst. Math. Sci.} {\bf 33} (1997), 839--867.

\bibitem{Be}
J. Beck, {\em Braid group action and quantum affine
algebras}, Comm. Math. Phys. {\bf 165} (1994), 655--568.

\bibitem{BG}
K.A. Brown and K.R. Goodearl, {\em Lectures on Algebraic Quantum Groups}, Adv. Courses Math. CRM Barcelona,
Birkh$\mathrm{\ddot{a}}$user Verlag, Basel, 2002.

\bibitem{Chari}
V. Chari, Vyjayanthi, {\em Braid group actions and tensor products.
Int. Math. Res. Not.} 2002, 357--382.



\bibitem{CM04}
V. Chari and A. A. Moura, {\em Spectral characters of finite-dimensional representations of affine algebras}, J. Algebra {\bf 279} (2004), 820--839.

\bibitem{CM}
V. Chari and A. A. Moura, {\em Characters and blocks for finite-dimensional representations of quantum affine algebras}, Int. Math. Res. Not. 2005, no. 5, 257--298.


\bibitem{CP91}
V. Chari and A. Pressley, {\em Quantum affine algebras}, Comm. Math.
Phys. {\bf 142} (1991), 261--283.


\bibitem{CPbk}
V. Chari and A. Pressley, {\em A Guide to Quantum Groups}, Cambridge
University Press, Cambridge, 1994.

\bibitem{CP95}
V. Chari and A. Pressley, {\em Quantum affine algebras and their
representations}, Representations of groups (Banff, AB, 1994),
59--78, CMS Conf. Proc., 16, Amer. Math. Soc., Providence, RI, 1995.


\bibitem{CP96}
V. Chari and A. Pressley, {\em Quantum affine algebras and affine Hecke algebras},
Pacific J. Math. {\bf 174} (1996), 295--326.

\bibitem{CP01}
V. Chari and A. Pressley, {\em Weyl modules for classical and quantum affine algebras}, Represent. Theory {\bf 5} (2001), 191--223.

\bibitem{Cox}
A. Cox, {\em The blocks of the q -Schur algebra}, J. Algebra  {\bf 207} (1998),  306--325.

\bibitem{DDF}
B.B. Deng, J. Du and Q. Fu,
{\em A double Hall algebra approach to affine quantum Schur--Weyl theory}, London
Mathematical Society Lecture Note Series, {\bf 401}, Cambridge University Press, 2012.

\bibitem{DJ89}
R. Dipper and G. James, \textit{The $q$-Schur
algebra}, Proc. London Math. Soc. {\bf 59} (1989), 23--50.



\bibitem{Donkin}
S. Donkin, {\em On Schur algebras and related algebras IV. The blocks of the Schur algebras,} J.Algebra {\bf 168} 1994, 400--429.


\bibitem{Du95}
J. Du, {\em A note on the quantized Weyl reciprocity at roots
of unity}, Alg. Colloq. {\bf 2} (1995), 363--372.

\bibitem{DPS}
J. Du, B. Parshall and L. Scott, {\em Quantum Weyl
reciprocity and tilting modules}, Commun. Math. Phys. {\bf 195} (1998), 321--352.

\bibitem{EM}
P. Etingof and A. A. Moura, {\em Elliptic central characters and blocks of finite dimensional representations of quantum affine algebras},
Represent. Theory {\bf 7} (2003), 346--373.

\bibitem{FM}
E. Frenkel and E. Mukhin, {\em The Hopf algebra ${\rm
Rep}\;U_q(\h{\frak{gl}}_\infty)$}, Sel. math., New Ser. {\bf 8}
(2002), 537--635.

\bibitem{Fu12}
Q. Fu, {\em Affine quantum Schur algebras and affine Hecke algebras}, preprint, arXiv:1205.0708.

\bibitem{Fu}
Q. Fu, {\em Integral affine Schur--Weyl reciprocity}, preprint, arXiv:1205.2030.

\bibitem{GV}
V. Ginzburg and E. Vasserot, {\em Langlands reciprocity for affine quantum groups of type $A_n$}, Internat. Math. Res. Notices 1993, 67--85.

\bibitem{Gr99}
R.~M. Green, {\em The affine $q$-Schur algebra}, J. Algebra {\bf 215} (1999),  379--411.

\bibitem{Hu}
J. Hu, {\em Schur-Weyl reciprocity between quantum groups and Hecke algebras of type $G(r,1,n)$}. Math. Z. {\bf 238} (2001), 505--521.

\bibitem{Hub2}
A. Hubery, {\em Three presentations of the Hopf algebra
${\mathcal U}_v({\widehat{\frak{gl}}_n})$}, preprint, 2009.



\bibitem{Jantzen}
J. Jantzen, {\em Lectures on quantum groups}, Graduate Studies in Mathematics {\bf 6}, 1996.

\bibitem{Jimbo}
M. Jimbo, \textit{A $q$-analogue of $U(\frak{gl}(N + 1))$, Hecke algebras, and the Yang¨CBaxter equation}, Lett. Math. Phy. {\bf 11}(1986), 247--252.

\bibitem{KL79}
D. Kazhdan and G. Lusztig, {\em Representations of Coxeter groups and Hecke Algebras,}
Invent. Math. {\bf 53} (1979), 165--184.

\bibitem{Lu99}
G. Lusztig, {\em Aperiodicity in quantum affine $\frak{gl}_n$},
Asian J. Math. {\bf 3} (1999), 147--177.

\bibitem{LM}
S. Lyle and A. Mathas, {\em Blocks of cyclotomic Hecke algebras}, Adv. Math. {\bf 216}  (2007), 854--878.

\bibitem{Muller}
B.J. M$\mathrm{\ddot{u}}$ller, {\em Localization in non-commutative Noetherian rings}, Canad. J. Math. 28 (1976) 600--610.

\bibitem{Po}
G. Pouchin, {\em A geometric Schur--Weyl duality for quotients
of affine Hecke algebras}, J. Algebra {\bf 321} (2009), 230--247.

\bibitem{Rogawski}
J. D. Rogawski, {\em On modules over the Hecke algebra of a $p$-adic
group}, Invent. Math. {\bf 79} (1985), 443--465.

\bibitem{SS}
M. Sakamoto, T. Shoji, {\em Schur--Weyl reciprocity for Ariki--Koike algebras}, J. Algebra {\bf 221} (1999) 293--314.


\bibitem{VV99}
M. Varagnolo and E. Vasserot, {\em On the
decomposition matrices of the quantized Schur algebra}, Duke Math. J. {\bf 100} (1999), 267--297.

\bibitem{VV02}
M. Varagnolo and E. Vasserot, {\em Standard modules of quantum affine algebras.}
Duke Math. J. {\bf 111} (2002), 509--533.

\bibitem{Zelevinsky}
A.~V. Zelevinsky, {\em Induced representations of reductive $p$-adic
groups II. On irreducible represnetations of $GL_n$}, Ann. Sci. Ec.
Norm. Sup. 4$^e$ S$\acute{e}$r.  13  (1980), 165--210.
\end{thebibliography}
\end{document}